\newcommand{\pA}{\mathcal{A}}
\newcommand{\pL}{\mathcal{L}}
\newcommand{\pR}{\mathcal{R}}
\newcommand{\pS}{\mathcal{S}}
\newcommand{\pV}{\mathcal{V}}
\newcommand{\pW}{\mathcal{W}}
\newcommand{\sL}{\mathsf{L}}
\newcommand{\sR}{\mathsf{R}}
\newcommand{\eH}{\mathscr{H}}
\newcommand{\eX}{\mathscr{X}}
\newcommand{\eW}{\mathscr{W}}
\newcommand{\eY}{\mathscr{Y}}
\newcommand{\eZ}{\mathscr{Z}}
\newcommand{\eU}{\mathscr{U}}
\newcommand{\eV}{\mathscr{V}}
\newcommand{\bC}{\mathbb{C}}
\newcommand{\bM}{\mathbb{M}}
\newcommand{\bN}{\mathbb{N}}
\newcommand{\Mu}{{\rm M}}
\newcommand{\Tau}{{\rm T}}
\newcommand{\Chi}{{\rm X}}
\newcommand{\Iota}{{\rm I}}
\newcommand{\rL}{{\rm L}}
\newcommand{\rR}{{\rm R}}
\newcommand{\Ele}{{{\mathscr E}\!\mathit{\ell} }}
\newcommand{\Dim}{{\rm dim}}
\newcommand{\Ker}{{\rm ker}}
\newcommand{\Image}{{\rm im}}
\newcommand{\spa}{{\rm span}}
\newcommand{\rk}{{\rm rk}}
\newcommand{\Lann}{{\rm Lann}}
\newcommand{\Rann}{{\rm Rann}}
\newtheorem{theorem}{Theorem}
\newtheorem{proposition}[theorem]{Proposition}
\newtheorem{lemma}[theorem]{Lemma}
\newtheorem{corollary}[theorem]{Corollary}
\theoremstyle{definition}
\newtheorem{example}[theorem]{Example}
\newtheorem{remark}[theorem]{Remark}
\numberwithin{equation}{section}
\begin{document}
%
\title[On the invertibility of  elementary operators]{On the invertibility of  elementary operators}
\author[N. Boudi]{Nadia Boudi}
\address{D\' epartement de Math\' ematiques, Universit\' e Moulay Ismail, Facult\'e des Sciences, Mekn\`es, Maroc}
\email{nadia\_boudi@hotmail.com}
\author[J. Bra\v{c}i\v{c}]{Janko Bra\v{c}i\v{c}}
\address{University of Ljubljana, IMFM, Jadranska ul. 19, SI-1000 Ljubljana, Slovenia}
\email{janko.bracic@fmf.uni-lj.si}
\begin{abstract}
Let $\eX$ be a complex Banach space and $\pL(\eX)$ be the algebra of all bounded linear operators on $\eX$. 
For a given elementary operator $\Phi$ of length $2$ on $\pL(\eX)$, we determine necessary and sufficient conditions for 
the existence of a solution of the equation $\Chi \Phi=0$ in the algebra of all elementary operators on $\pL(\eX)$. 
Our approach allows us to characterize some invertible elementary operators of length $2$ whose inverses are elementary operators.
\end{abstract}
\maketitle

\section{Introduction} \label{sec01}
\setcounter{theorem}{0}

Let $\eX$ be a complex Banach  space, let $\pL(\eX)$ be the algebra of all bounded linear operators on $\eX$, and
let $I$ be the identity operator. For $A,B \in \pL(\eX)$, let $\Mu_{A,B}:~T \mapsto ATB$ be the multiplication operator on  $\pL(\eX)$ induced by $A$ 
and $B$. In particular, $\rL_A=\Mu_{A,I}$ and $\rR_B=\Mu_{I,B}$ are left, respectively right, multiplication operators.

An elementary operator  $\Delta$ on $\pL(\eX)$ is a finite sum of multiplication operators. The length $\ell (\Delta)$ of $\Delta$
is the minimum number of multiplication operators needed in the representation of $\Delta$. Let $\Ele(\pL(\eX))$ stand for  the set of all elementary
operators on $\pL(\eX)$. It is obvious that $\Ele(\pL(\eX))$ is  a subalgebra of $\pL\bigl( \pL(\eX)\bigr)$. 

Our general  purpose  is to find an approach that enables us to characterize  the relationship between non-invertibility
(respectively, invertibility) of an elementary operator and the properties of the defining coefficients. It is clear that an operator
$\Phi \in \pL\bigl( \pL(\eX)\bigr)$ is non-invertible if it is a right zero divisor or a left zero divisor. Our first question is, which properties of the
defining coefficients of a non-invertible elementary operator $\Phi$ of length $2$ make it possible or impossible for $\Phi$ to have a left or a right 
zero divisor in $\Ele(\pL(\eX))$? Roughly speaking, when is the non-invertibility of a length $2$ elementary operator caused by an elementary operator? 
In particular, we show that if a length $2$ elementary operator $\Phi$ is annihilated by an elementary operator, then either there exists a multiplication 
operator $\Mu$ such that $\Mu \Phi=0$ (or $\Phi \Mu=0$), or for every elementary operator $\Psi$ having the same defining spaces, $\Psi$ is non-invertible 
and its non-invertibility is caused by an elementary operator. In the latter case, the defining spaces of $\Phi$ are equivalent to some maximal linear 
spaces of constant rank matrices.  Our study  is based on the description of two-dimensional complex matrix spaces of bounded rank. An interesting 
characterization is that of Atkinson and Stephens \cite{AS}. However, we deal with a different form (see Lemma \ref{brank}). Indeed, we believe that the 
study of equations of the form $\Chi \Phi=0$ for general elementary operators may shed new light on some aspects of matrix spaces with bounded rank.

An important example of length $2$ elementary operators are given by $\Upsilon_{A,B} = \Iota+ \Mu_{A,B}$. We show that
if $\Upsilon_{A,B}$ is invertible and $\Upsilon_{A,B}^{-1}$ is an elementary operator of length $n$, then either $A$ or $B$ is algebraic and
$\min \{\deg(A),\deg(B)\}= n$.
As we shall see, in the case of matrix algebras $\bM_n$, every length $2 $ invertible elementary operator is
a sum of two invertible multiplication operators. Hence all invertible elementary operators are of the form $\Mu_{C,D} \Upsilon_{A,B}$, where $A,B, C, D \in \bM_n$ and
$C, D$ are invertible. However, in the infinite-dimensional case, there are examples of length 2 invertible 
elementary operators, such that all elementary operators having the same defining spaces are invertible and the inverse is of length $2$. 

Our  proofs are elementary in the sense that we use only basic facts from linear algebra and operator theory and basic properties of tensor products.  

The paper is organized as follows. In next section,  various known preliminary results are assembled. In Section \ref{sec03}, we characterize 
elementary operators of length at most $2$ which are annihilated by elementary operators. In Section \ref{sec04} we mainly deal with invertible elementary 
operators of length $2$ whose inverses are elementary operators. In particular, we obtain a complete characterization of invertible length 2 elementary 
operators with inverse of length 2.

\section{Preliminaries} \label{sec02}
\setcounter{theorem}{0}

Let $\eX$ and $ \eY$ be complex Banach spaces. We denote by $\pL(\eX,\eY)$ the space of all bounded linear operators from $\eX$ to $\eY$.
A linear subspace $\pV\subseteq \pL(\eX,\eY)$ is said to be of bounded rank $r$ if $\rk(T) \leq r$, for all $T \in \pV$, 
and it is said to be of constant rank $r$ if $\rk(T)=r$, for all nonzero $T \in \pV$. We will denote by $ \rk(\pV)$ the maximum of the ranks of elements in $\pV$. 
Suppose that $\eX$ and $\eY$ are finite dimensional and that $B$ and $B'$ are bases of $\eX$ and $\eY$, respectively. For $T \in \pV$,  let $M(T, B,B')$ 
denote the matrix representation of $T$ with respect to $B$ and $B'$. Then  $\pV$ is said to be equivalent to the space of matrices $\{ M(T,B,B'):~ T \in \pV\}$.

Denote by $\eX \otimes \eY$ the algebraic tensor product of $\eX$ and $\eY$.
For $x= \sum_{i=1}^n u_i \otimes v_i \in \eX \otimes \eY$, we set
\begin{equation}
\sL(x)= \spa \{u_1, \ldots, u_n\}\qquad \text{and}\qquad \sR(x)= \spa \{v_1, \ldots, v_n\}.
\end{equation}
Let us recall that the rank of $x$ is the minimum number $r(x)$ of simple tensors needed in the representation of $x$.
If $V$ is a vector subspace of $\eX \otimes \eY$, then an element $x \in V$ a minimal tensor of $V$ if, for every nonzero
$y \in V$ such that $r(x)=r(y)+r(x-y)$, one has $y =x$.

Recall the following simple lemma.

\begin{lemma}\label{dim}
Let $x= \sum_{i=1}^n u_i \otimes v_i$. If $x=0$, then $\Dim(\sL(x))+ \Dim(\sR(x))\leq n$.
\end{lemma}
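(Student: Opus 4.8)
The plan is to turn this bilinear statement into an elementary counting argument by re-expanding $x$ along a basis of its left space. Write $p=\Dim(\sL(x))$ and $q=\Dim(\sR(x))$; the target inequality is $p+q\le n$. First I would fix a basis $e_1,\dots,e_p$ of $\sL(x)$ together with coordinates $a_{ik}\in\bC$ such that $u_i=\sum_{k=1}^{p}a_{ik}e_k$, so that
\[
0=x=\sum_{i=1}^{n}u_i\otimes v_i=\sum_{k=1}^{p}e_k\otimes w_k,\qquad w_k:=\sum_{i=1}^{n}a_{ik}v_i .
\]
Since $e_1,\dots,e_p$ are linearly independent, applying to this identity the functional $f_j\otimes\mathrm{id}$, where $f_j\in\sL(x)^{*}$ is dual to $e_j$, yields $w_j=0$ for every $j$. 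Hence each column $(a_{1k},\dots,a_{nk})\in\bC^{n}$ of the $n\times p$ matrix $A=(a_{ik})$ is a linear relation among $v_1,\dots,v_n$.

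Next I would read off two rank facts. Because $\{u_1,\dots,u_n\}$ spans the $p$-dimensional space $\sL(x)$, the rows of $A$ span $\bC^{p}$, so $\rk(A)=p$, and in particular the $p$ columns of $A$ are linearly independent vectors of $\bC^{n}$. On the other hand, the relation space $\{c\in\bC^{n}:\sum_{i=1}^{n}c_iv_i=0\}$ has dimension $n-q$, since $\spa\{v_1,\dots,v_n\}=\sR(x)$ is $q$-dimensional. The first step places $p$ independent columns of $A$ inside this $(n-q)$-dimensional space, so $p\le n-q$, which is exactly the claim.

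I do not anticipate a real obstacle; the one point deserving a line of justification is the implication ``$\sum_{k}e_k\otimes w_k=0$ with the $e_k$ independent $\Rightarrow$ all $w_k=0$'', which is immediate in the algebraic tensor product because $x$ already lies in the finite-dimensional space $\sL(x)\otimes\sR(x)$, so the dual functionals $f_j$ are available. (A symmetric argument starting from a basis of $\sR(x)$ works equally well and gives the same bound.)
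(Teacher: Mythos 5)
Your proof is correct and follows essentially the same route as the paper's: both re-expand $x$ over a basis of $\sL(x)$ and use that the resulting right-hand components must vanish. The only cosmetic difference is in the final count --- the paper takes the basis from among the $u_i$ themselves and reads off that $\sR(x)=\spa\{v_{r+1},\dots,v_n\}$ has dimension at most $n-r$, whereas you count the $p$ independent columns of the coefficient matrix inside the $(n-q)$-dimensional relation space of the $v_i$; both yield $\Dim(\sL(x))+\Dim(\sR(x))\le n$ at once.
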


\begin{proof}
Let $\{u_{i_1}, \ldots, u_{i_r}\}$ be a maximal linearly independent subset of $\sL(x)$. With no loss of generality we may assume that $i_t=t$,
for $1 \leq t \leq r$.  Write
$ u_j= \sum_{i=1}^r  \alpha_{ij} u_i$,  for $r+1 \leq j \leq n$.
Then $x= \sum_{i=1}^r u_i \otimes (v_i+ \sum_{j=r+1}^n  \alpha_{ij}v_j)$.
Since $x=0$ and $\{u_1, \ldots, u_r\}$ is a linearly independent set one has $v_1, \ldots, v_r\in \spa \{v_{r+1}, \ldots, v_n\}$.
\end{proof}

For an algebra $\pA$, let $\pA^{op}$ denote the opposite algebra, i.e., the algebra which has the same underlying vector space as $\pA$ but the 
multiplication is given by $x*y=yx$.
The main tool which we use in our study is Theorem 5 and its Corollary  in \cite{Mar}
(see also \cite{FS}).

\begin{lemma}\label{inj} 
The mapping $\varphi: \pL(\eX) \otimes \pL(\eX)^{op} \rightarrow \Ele(\pL(\eX))$, which is defined by 
$\varphi (\sum_{i=1}^n A_i \otimes B_i)= \sum_{i=1}^n \Mu_{A_i, B_i}$,
is an injective homomorphism.
\end{lemma}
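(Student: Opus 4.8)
The plan is to prove that $\varphi$ is a well-defined algebra homomorphism and then establish injectivity, which is the substantive part. For well-definedness and the homomorphism property, I would start from the universal property of the algebraic tensor product: the bilinear map $(A,B)\mapsto\Mu_{A,B}$ from $\pL(\eX)\times\pL(\eX)$ to $\Ele(\pL(\eX))$ induces a unique linear map $\varphi$ on $\pL(\eX)\otimes\pL(\eX)$. To see that it respects the opposite-algebra multiplication, one computes on simple tensors: $\varphi\bigl((A_1\otimes B_1)(A_2\otimes B_2)\bigr)=\varphi(A_1A_2\otimes B_2*B_1)=\varphi(A_1A_2\otimes B_1B_2)=\Mu_{A_1A_2,B_1B_2}$, and on the other side $\varphi(A_1\otimes B_1)\varphi(A_2\otimes B_2)=\Mu_{A_1,B_1}\Mu_{A_2,B_2}$, which sends $T\mapsto A_1(A_2TB_2)B_1=(A_1A_2)T(B_2B_1)$; since multiplication in $\pL(\eX)^{op}$ is $B_1*B_2=B_2B_1$, these agree. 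Extending bilinearly over both tensor factors gives the homomorphism property in general. Surjectivity onto $\Ele(\pL(\eX))$ is immediate from the definition of an elementary operator as a finite sum of multiplication operators, though the statement only claims injectivity, so I would record surjectivity only as a remark if at all.

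The heart of the matter is injectivity. Suppose $\sum_{i=1}^n A_i\otimes B_i$ lies in the kernel, i.e. $\sum_{i=1}^n A_iTB_i=0$ for every $T\in\pL(\eX)$, and we want to conclude $\sum_{i=1}^n A_i\otimes B_i=0$ in $\pL(\eX)\otimes\pL(\eX)^{op}$. After replacing $\{A_i\}$ by a basis of its span and absorbing the change-of-basis coefficients into the $B_i$'s (exactly the manoeuvre used in the proof of Lemma~\ref{dim}), we may assume $A_1,\dots,A_n$ are linearly independent; it then suffices to show every $B_i=0$. The plan is to test against rank-one operators: take $T=x\otimes f$ for $x\in\eX$ and $f\in\eX^*$, so that $A_iTB_i$ is the rank-one operator $(A_ix)\otimes(B_i^*f)$, and $\sum_i (A_ix)\otimes(B_i^*f)=0$ in $\pL(\eX)$. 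For fixed $f$, if not all $B_i^*f$ vanish we can pick a functional separating them and reduce, via linear independence of the $A_i$ (which gives, for a suitable $x$, that the vectors $A_ix$ are ``independent enough''), to force $B_i^*f=0$; letting $f$ range over $\eX^*$ yields $B_i=0$. Alternatively—and this is the cleaner route—one invokes the cited Martindale-type result (Theorem~5 and its Corollary in \cite{Mar}, cf.\ \cite{FS}): in a prime ring (and $\pL(\eX)$ is prime), if $\sum A_iTB_i=0$ identically with the $A_i$ linearly independent over the centroid, then each $B_i=0$. Since $\pL(\eX)$ is a prime algebra with centroid $\bC$, this applies directly.

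I would organize the proof in three short steps: (1) invoke the universal property to get $\varphi$ linear and well-defined; (2) verify the homomorphism identity on simple tensors and extend bilinearly, being careful that the opposite multiplication on the second factor is exactly what makes $\Mu_{A,B}\Mu_{C,D}=\Mu_{AC,DB}$ match $\varphi$ of the product; (3) prove injectivity by the reduction to linearly independent $A_i$ followed by an appeal to the prime-ring identity from \cite{Mar}. The main obstacle is step~(3): one must be sure that the hypothesis ``linearly independent'' is the right independence notion (over $\bC$, equivalently over the centroid of $\pL(\eX)$) and that $\pL(\eX)$ genuinely satisfies the hypotheses of the Martindale-style theorem—primeness and the description of its centroid as $\bC$—so that the cited result yields $B_i=0$ rather than some weaker conclusion. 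Once that is in place, kernel-triviality follows and, combined with the evident $\bC$-linearity, gives injectivity of $\varphi$.
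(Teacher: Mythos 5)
Your ``cleaner route'' for injectivity is in fact exactly what the paper does: the paper offers no proof of Lemma \ref{inj} at all, but states it as a direct consequence of Theorem 5 and its Corollary in \cite{Mar} (see also \cite{FS}), i.e.\ precisely the Martindale-type identity you invoke, with the two hypotheses you correctly single out ($\pL(\eX)$ is prime and its extended centroid is $\bC$). Your self-contained alternative via rank-one test operators is a genuine addition, but the reduction you sketch does not work as stated: linear independence of $A_1,\dots,A_n$ in $\pL(\eX)$ does \emph{not} guarantee a vector $x$ for which $A_1x,\dots,A_nx$ are ``independent enough'' (take $A_1=e\otimes f_1$ and $A_2=e\otimes f_2$ with $f_1,f_2$ independent functionals: then $A_1x$ and $A_2x$ are always parallel). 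The fix is to keep the independence on the operators themselves: testing against $T=x\otimes f$ gives $\sum_i f(B_iy)\,A_ix=0$ for all $x,y\in\eX$ and $f\in\eX^*$; fixing $y$ and $f$ and letting $x$ range shows that the operator $\sum_i f(B_iy)A_i$ is zero, so $f(B_iy)=0$ for each $i$ by independence of the $A_i$, and Hahn--Banach over all $f$ and all $y$ yields $B_i=0$. With that repair (and your correct preliminary step of re-expanding over a basis of $\spa\{A_1,\dots,A_n\}$, the same manoeuvre as in Lemma \ref{dim}), the elementary route is complete and is arguably preferable to the citation, since it avoids importing the prime-ring machinery.

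One bookkeeping slip in the homomorphism check: the product in $\pL(\eX)\otimes\pL(\eX)^{op}$ is $(A_1\otimes B_1)(A_2\otimes B_2)=A_1A_2\otimes(B_1*B_2)=A_1A_2\otimes B_2B_1$, so $\varphi$ of the product is $\Mu_{A_1A_2,\,B_2B_1}$, not $\Mu_{A_1A_2,\,B_1B_2}$ as written. Since you correctly computed $\Mu_{A_1,B_1}\Mu_{A_2,B_2}:T\mapsto (A_1A_2)T(B_2B_1)$ on the other side, the identity you need is true and the argument goes through; only the intermediate line has the factors in the wrong order.
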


Let $\Delta=\sum_{i=1}^n \Mu_{A_i, B_i}$ be an elementary operator. It follows from Lemma \ref{inj} that $\Delta$ has length $n$ if and only if the
corresponding tensor $\sum_{i=1}^n A_i \otimes B_i$ has rank $n$. Here again we set $\sL(\Delta)= \spa \{A_1, \ldots, A_n\}$ and 
$\sR(\Delta)= \spa \{B_1, \ldots, B_n\}$.

For later convenience we state the following corollary of Lemma \ref{inj} (see also \cite[Lemma 1.2]{LLWW}).

\begin{corollary} \label{cor01}
Let $\{ A_1, \ldots, A_m\}$ and $\{ D_1, \ldots, D_n\}$ be linearly independent subsets in $\pL(\eX)$ and let $B_1, \ldots, B_m$,
$C_1, \ldots, C_n$ $\in \pL(\eX)$ be such that
\begin{equation} \label{eq57}
\sum_{i=1}^m \Mu_{A_i, B_i}  + \sum_{i=1}^n \Mu_{C_i, D_i}=0.
\end{equation}
Then $C_i\in \spa\{A_1, \ldots, A_m\}$, for $1\leq i \leq n$, and $B_j \in \spa\{D_1, \ldots, D_n\}$, for $1\leq j \leq m$.
\end{corollary}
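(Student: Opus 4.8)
The plan is to derive Corollary~\ref{cor01} directly from the injectivity of $\varphi$ in Lemma~\ref{inj} together with the elementary Lemma~\ref{dim}. Equation~\eqref{eq57} says that the tensor $x = \sum_{i=1}^m A_i \otimes B_i + \sum_{i=1}^n C_i \otimes D_i$ lies in the kernel of $\varphi$, so by Lemma~\ref{inj} we have $x = 0$ in $\pL(\eX)\otimes\pL(\eX)^{op}$. The whole point is then to extract, from the single vanishing identity $x=0$, the two separate containment statements about the $C_i$ and the $B_j$.

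First I would prove the statement about the $B_j$. Fix $j$ with $1 \le j \le m$. I want to show $B_j \in \spa\{D_1,\dots,D_n\}$. The idea is to apply a linear functional to the first tensor leg that isolates $A_j$: since $\{A_1,\dots,A_m\}$ is linearly independent, choose $f \in \pL(\eX)^*$ (a functional on the vector space $\pL(\eX)$, extending a functional on $\spa\{A_1,\dots,A_m\}$) with $f(A_j) = 1$ and $f(A_i) = 0$ for $i \ne j$, $1\le i\le m$. Apply $f \otimes \mathrm{id}$ to the identity $x = 0$; this is a well-defined linear map $\pL(\eX)\otimes\pL(\eX)^{op} \to \pL(\eX)^{op}$ on the algebraic tensor product. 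We obtain
\begin{equation}
B_j + \sum_{i=1}^n f(C_i)\, D_i = 0,
\end{equation}
which immediately gives $B_j = -\sum_{i=1}^n f(C_i) D_i \in \spa\{D_1,\dots,D_n\}$. The symmetric argument, applying $\mathrm{id}\otimes g$ for a functional $g$ dual to the linearly independent set $\{D_1,\dots,D_n\}$ with $g(D_k) = \delta_{ik}$, yields for each fixed $i$ with $1 \le i \le n$ the identity $A_{?}$-free relation $C_i + \sum_{j=1}^m g(B_j) A_j = 0$, hence $C_i \in \spa\{A_1,\dots,A_m\}$.

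The only mild subtlety — and the step I would be most careful about — is justifying that the slice maps $f\otimes\mathrm{id}$ and $\mathrm{id}\otimes g$ are well defined on the algebraic tensor product and really produce the claimed formulas; this is a standard fact about algebraic tensor products (a bilinear map factors through $\otimes$), and the paper has explicitly reserved the right to use ``basic properties of tensor products,'' so no difficulty arises. One could alternatively bypass functionals entirely and argue via Lemma~\ref{dim}: after enlarging $\{A_1,\dots,A_m\}$ and $\{D_1,\dots,D_n\}$ to a basis of $\spa(\{A_i\}\cup\{C_i\}\cup\{D_i\})$ as needed and rewriting $x$ in a reduced form, vanishing of $x$ forces the stated dependencies. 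But the slicing argument is cleaner and I would present that. If one prefers to stay purely within $\Ele(\pL(\eX))$ without invoking $\varphi$, an alternative is to evaluate $\Bigl(\sum_i \Mu_{A_i,B_i} + \sum_i \Mu_{C_i,D_i}\Bigr)(T) = 0$ on rank-one operators $T = \xi\otimes\eta$ and use linear independence of $\{A_1,\dots,A_m\}$ as operators (which follows from their linear independence as vectors together with a suitable choice of $\eta$); this reproduces the same conclusion but is messier, so the tensor-slicing route via Lemma~\ref{inj} is the approach of choice.
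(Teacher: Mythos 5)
Your argument is correct, and it diverges from the paper's proof at the decisive step. Both proofs begin identically: Lemma~\ref{inj} converts \eqref{eq57} into the vanishing of the tensor $x=\sum_i A_i\otimes B_i+\sum_i C_i\otimes D_i$. From there the paper invokes Lemma~\ref{dim} to get $\Dim(\spa\{A_i,C_i\})+\Dim(\spa\{B_j,D_j\})\le n+m$, and then uses the two linear independence hypotheses to force both dimensions to their minimal values $m$ and $n$, which yields the containments by pure dimension counting. You instead extract the containments by slicing: extending a dual functional of $\{A_1,\dots,A_m\}$ to a linear functional on $\pL(\eX)$ (only linearity is needed, since the tensor product is algebraic) and applying $f\otimes\mathrm{id}$ to $x=0$ gives the explicit relation $B_j=-\sum_i f(C_i)D_i$, and symmetrically for the $C_i$. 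Your route is self-contained, does not need Lemma~\ref{dim}, and has the small advantage of producing explicit coefficients; the paper's route is shorter in context because Lemma~\ref{dim} is already on the table and is reused throughout the paper. The only blemishes are cosmetic: the phrase ``the identity $A_{?}$-free relation'' is a leftover placeholder, and the condition $g(D_k)=\delta_{ik}$ should be stated for the fixed index $i$; neither affects correctness.
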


\begin{proof}
By Lemma \ref{inj}, $ \sum_{i=1}^m A_i \otimes B_i  + \sum_{i=1}^n C_i \otimes D_i=0$.
Now Lemma \ref{dim} implies that
\begin{equation*}
\Dim (\spa \{A_1, \ldots, A_m, C_1, \cdots C_n \}) + \Dim (\spa \{B_1, \ldots, B_m, D_1, \cdots D_n \}) \leq n+m.
\end{equation*}
However, $\{ A_1, \ldots, A_m\}$ and $\{ D_1, \ldots, D_n\}$ are sets of linearly independent operators. Hence 
$\Dim(\spa \{A_1, \ldots, A_m, C_1, \cdots C_n \}) = m$ and $\Dim(\spa \{B_1, \ldots, B_m, D_1, \cdots D_n \}) = n$. 
We conclude that $C_i\in \spa\{A_1, \ldots, A_m\}$ $(1\leq i \leq n)$ and $B_j \in \spa\{D_1, \ldots, D_n\}$ $(1\leq j \leq m)$.
\end{proof}

We will also need the following simple lemma.

\begin{lemma} \label{lem02}
Let $\Delta$ be an elementary operator of length $n\leq m$. Assume that $\{C_1, \ldots, C_m\}$ is a spanning set of $\mathsf{L}(\Delta)$. 
Then there exists a spanning set  $\{D_1, \ldots, D_m\}$ of $\mathsf{R}(\Delta)$ such that
$\Delta= \sum_{j=1}^{m} \Mu_{C_j,D_j}$.
\end{lemma}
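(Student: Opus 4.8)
The plan is to start from a minimal representation $\Delta=\sum_{i=1}^n \Mu_{A_i,B_i}$ with $\{A_1,\ldots,A_n\}$ linearly independent and $\{B_1,\ldots,B_n\}$ linearly independent; such a representation exists because $\ell(\Delta)=n$ forces the tensor $\sum A_i\otimes B_i$ to have rank $n$, hence both $\sL(\Delta)$ and $\sR(\Delta)$ are $n$-dimensional and are spanned by the $A_i$'s and $B_i$'s respectively. Now the given spanning set $\{C_1,\ldots,C_m\}$ of $\sL(\Delta)$ has $m\geq n$ elements; after relabeling we may assume $\{C_1,\ldots,C_n\}$ is a basis of $\sL(\Delta)$. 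First I would express the old basis in terms of the new one: write $A_i=\sum_{k=1}^n \beta_{ki}C_k$ for an invertible matrix $(\beta_{ki})$, and substitute to get $\Delta=\sum_{k=1}^n \Mu_{C_k,\widetilde D_k}$ where $\widetilde D_k=\sum_{i=1}^n \beta_{ki}B_i$. Since $(\beta_{ki})$ is invertible, $\{\widetilde D_1,\ldots,\widetilde D_n\}$ is again a basis of $\sR(\Delta)$.

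Next I would absorb the remaining generators $C_{n+1},\ldots,C_m$. Each such $C_j$ lies in $\sL(\Delta)=\spa\{C_1,\ldots,C_n\}$, so write $C_j=\sum_{k=1}^n \gamma_{kj}C_k$. The trick is to redistribute: set $D_k=\widetilde D_k$ adjusted so that the extra terms cancel, or more cleanly, note that for any scalars we have the identity
\begin{equation*}
\Mu_{C_j,E_j}+\sum_{k=1}^n \Mu_{C_k,\,\widetilde D_k-\gamma_{kj}E_j}=\sum_{k=1}^n \Mu_{C_k,\widetilde D_k}=\Delta
\end{equation*}
for any $E_j\in\pL(\eX)$, because $\Mu$ is linear in the second slot and $C_j-\sum_k\gamma_{kj}C_k=0$. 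Doing this one index at a time (or all at once, choosing $E_j$ arbitrarily, say $E_j=0$) we obtain $D_j:=E_j$ for $j>n$ and the corrected $D_k$ for $k\leq n$, with $\Delta=\sum_{j=1}^m\Mu_{C_j,D_j}$. Taking $E_j=0$ gives $D_j=0$ for $j>n$ and $D_k=\widetilde D_k$ for $k\le n$, which already works, but then $\{D_1,\ldots,D_m\}$ might not span $\sR(\Delta)$ if — wait, it does, since $\{\widetilde D_1,\ldots,\widetilde D_n\}$ already spans $\sR(\Delta)$ and adding zero vectors does not shrink the span. So in fact $D_j=0$ for $j>n$ and $D_j=\widetilde D_j$ otherwise is a valid choice.

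The only subtlety, and the step I expect to require the most care, is the appeal to Lemma \ref{inj} (or Corollary \ref{cor01}) to justify that the representations manipulated above genuinely equal $\Delta$ as operators: all the rewriting is carried out at the level of the tensor $\sum A_i\otimes B_i$ in $\pL(\eX)\otimes\pL(\eX)^{op}$, where substitution of linear combinations is unambiguous, and then $\varphi$ transports the identity back to $\Ele(\pL(\eX))$. One should also record that the hypothesis $n\leq m$ is used only to guarantee that a basis of the $n$-dimensional space $\sL(\Delta)$ can be extracted from the $m$-element spanning set $\{C_1,\ldots,C_m\}$; no upper bound on $m$ is needed. Thus the proof is essentially a change-of-basis argument padded with zero multiplication operators, and I would present it in exactly that order: reduce to a minimal representation, change to the basis $\{C_1,\ldots,C_n\}$, then append $C_{n+1},\ldots,C_m$ with zero second coordinates.
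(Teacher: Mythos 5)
Your proof is correct and follows essentially the same route as the paper: rewrite the tensor $\sum_i A_i\otimes B_i$ over the $C_j$'s by linearity and transfer back via the injectivity of $\varphi$ (Lemma \ref{inj}). The only cosmetic difference is that you first extract a basis from $\{C_1,\ldots,C_m\}$ and pad with $D_j=0$ for $j>n$, whereas the paper expresses each $A_j$ directly as a combination of all $m$ spanning elements, which yields the $D_i$'s in one step.
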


\begin{proof}
Write $\Delta= \sum\limits_{i=1}^{n} \Mu_{A_i, B_i}$. Since $\spa \{C_1, \ldots, C_m\}= \spa \{A_1, \ldots, A_n \}$ there exist numbers 
$\alpha_{ij}$ $(1\leq i\leq m,~$ $1\leq j \leq n)$ such that $ A_j=\sum\limits_{i=1}^m\alpha_{ij}C_i$, for all $j=1, \ldots, n.$
Hence
\begin{equation*}
\sum\limits_{j=1}^n A_j \otimes B_j= \sum\limits_{j=1}^{n}(\sum\limits_{i=1}^m\alpha_{ij}C_i )\otimes B_j=\sum\limits_{i=1}^{m}C_i \otimes (\sum\limits_{j=1}^n\alpha_{ij}B_j).
\end{equation*}
Let $D_i=\sum\limits_{j=1}^n \alpha_{ij}B_j$. Then one has $ \sum\limits_{i=1}^n A_i \otimes B_i=\sum\limits_{i=1}^{m}C_i \otimes D_i$. By Lemma \ref{inj},  $\Delta=\sum\limits_{i=1}^{m}\Mu_{C_i, D_i}$.
\end{proof}

\section{Non-invertibility} \label{sec03}
\setcounter{theorem}{0}

Recall that a nonzero element $a$ in a ring $\pR$ is a left zero divisor if there exists a nonzero $b\in \pR$ such that $ab=0$.
The notion of a right zero divisor is defined similarly. As usual, if $\pS \subseteq \pR$,  then $\Lann(\pS)=\{a \in \pR:  a\pS =0\}$ is
the left annihilator of $\pS$. The right annihilator $\Rann(\pS)$ is defined similarly. Note that an operator $T\in \pL(\eX)$ is a left zero divisor if and only if
$ \Ker T \ne \{ 0\}$ and it is a right zero divisor if and only if $\overline{\Image \;T}\ne \eX$. \par
Let $\Delta \in \Ele(\pL(\eX))$ be an elementary operator of length at most $2$. In this section, the main question is, under which condition the equation
\begin{equation}\label{eq58}
\Chi \Delta=0
\end{equation}
has a solution in $\Ele(\pL(\eX))$. The equation $\Delta \Chi=0$ can be treated in an  analogous way. \par
An operator $  \Lambda  \in \Ele(\pL(\eX))$ is a minimal solution of \eqref{eq58} if, for every elementary operator $\Lambda' \ne 0$ satisfying 
$\ell(\Lambda- \Lambda') + \ell(\Lambda')=\ell(\Lambda)$, one has $\Lambda' \Delta \neq 0$ (that is, $\varphi^{-1} (\Lambda)$ is a minimal tensor 
satisfying $\varphi^{-1} (\Lambda)\varphi^{-1} (\Delta)=0$).

\begin{lemma} \label{lem03}
Every solution of \eqref{eq58} is a sum of minimal solutions.
\end{lemma}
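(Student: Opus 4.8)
The plan is to translate the statement about elementary operators into the language of tensors via the injective homomorphism $\varphi$ of Lemma \ref{inj}, and then argue by induction on the length $\ell(\Chi)$. Indeed, writing $\Chi = \varphi(x)$ where $x \in \pL(\eX) \otimes \pL(\eX)^{op}$ has rank $\ell(\Chi)$, the equation $\Chi\Delta = 0$ becomes $x\cdot\varphi^{-1}(\Delta) = 0$ in the appropriate sense, and a ``minimal solution'' is precisely the image under $\varphi$ of a minimal tensor (in the sense defined just before the statement) in the subspace $V = \{y : \varphi(y)\Delta = 0\} = \Ker(\varphi(\cdot)\Delta)$, which is a linear subspace of the tensor product. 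So the claim reduces to: every element of a linear subspace $V$ of a tensor product is a sum of minimal tensors of $V$.

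The induction is on $\ell(\Chi) = r(x)$, the rank of $x$. If $x = 0$ there is nothing to prove (the empty sum), and if $r(x) = 1$ then $x$ is itself minimal, since any decomposition $x = y + (x-y)$ with $r(x) = r(y) + r(x-y)$ and $y \neq 0$ forces $r(y) = 1$ and $r(x-y) = 0$, i.e. $x - y = 0$ and $y = x$. For the inductive step, suppose $r(x) = k \geq 2$. If $x$ is already a minimal tensor of $V$, we are done. Otherwise there is a nonzero $y \in V$ with $r(x) = r(y) + r(x-y)$ and $y \neq x$; then also $x - y \in V$ (here I use that $V$ is a linear subspace), $x - y \neq 0$ (else $y = x$), so both $r(y)$ and $r(x-y)$ are strictly positive, hence both strictly less than $k$. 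By the induction hypothesis applied to $V$ at lengths $r(y) < k$ and $r(x-y) < k$, each of $y$ and $x-y$ is a sum of minimal tensors of $V$, and concatenating these two sums expresses $x$ as a sum of minimal tensors of $V$. This completes the induction; pushing everything back through $\varphi$ gives the statement for $\Ele(\pL(\eX))$, with $\ell$ playing the role of $r$ throughout.

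The only points that need care — and the step I expect to be the main obstacle — are the bookkeeping identities relating $\ell$ to the tensor rank $r$ and the verification that the relation $\ell(\Lambda - \Lambda') + \ell(\Lambda') = \ell(\Lambda)$ corresponds exactly to $r(x) = r(y) + r(x-y)$ with $x = \varphi^{-1}(\Lambda)$, $y = \varphi^{-1}(\Lambda')$. This is where Lemma \ref{inj} is essential: because $\varphi$ is an injective algebra homomorphism, $\ell(\Lambda) = r(\varphi^{-1}(\Lambda))$ for every elementary operator $\Lambda$, so the additivity condition and the minimality condition transfer verbatim. One should also note that the set $V$ of solutions is genuinely a linear subspace of $\pL(\eX)\otimes\pL(\eX)^{op}$ (closure under addition and scalar multiplication is immediate from linearity of $\Psi \mapsto \Psi\Delta$), which is what makes $x - y$ a legitimate element of $V$ in the inductive step. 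Apart from these routine but necessary checks, the argument is a clean strong induction on the rank.
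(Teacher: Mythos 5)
Your proof is correct and is essentially the paper's own argument: a (strong) induction on the length of the solution, splitting a non-minimal solution $\Lambda$ into a nonzero annihilating piece $\Lambda'$ and the remainder $\Lambda-\Lambda'$, each of strictly smaller length, and applying the induction hypothesis to both. The paper phrases it directly for elementary operators rather than passing through $\varphi$ to tensors, but by Lemma \ref{inj} the two formulations are verbatim equivalent, as you note.
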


\begin{proof}
Let $\Lambda \neq 0$ be a solution of \eqref{eq58}. We proceed by induction on the length of $\Lambda$. The case $ \ell(\Lambda)=1$ is trivial. 
Assume therefore that $\ell(\Lambda) >1$ and that the desired conclusion holds for any $k < \ell(\Lambda)$. If $\Lambda$ is not a minimal 
solution of \eqref{eq58}, then there exists a nonzero elementary operator $\Lambda'$ such that $\ell(\Lambda)= \ell(\Lambda')+ \ell(\Lambda-\Lambda')$ 
and $\Lambda' \;\Delta=0$. Hence $(\Lambda-\Lambda')\; \Delta=0$. By the induction hypothesis we have that $\Lambda'$ and $\Lambda- \Lambda'$ are sums 
of minimal solutions of \eqref{eq58}. This yields the desired result.
\end{proof}

\begin{lemma}\label{rank1} 
Let $\eX_1, \eX_2,\eY_1, \eY_2$ be finite-dimensional complex vector spaces and $A \in \pL(\eX_1, \eY_1)$, $B \in  \pL(\eX_2, \eY_2)$. 
If $u \in \eX_1 \otimes \eX_2$ is a minimal tensor in the kernel of $A \otimes B$, then it is a simple tensor.
\end{lemma}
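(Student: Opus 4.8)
The plan is to prove the contrapositive: if $u \in \Ker(A \otimes B)$ has rank $r = r(u) \geq 2$, then $u$ is \emph{not} a minimal tensor of the subspace $\Ker(A\otimes B) \subseteq \eX_1 \otimes \eX_2$. Concretely, I will exhibit a nonzero $y \in \Ker(A\otimes B)$ with $y \neq u$ and $r(u) = r(y) + r(u-y)$, which contradicts minimality. Fix a representation $u = \sum_{i=1}^{r} a_i \otimes b_i$ with exactly $r = r(u)$ summands. Applying $A \otimes B$ gives $\sum_{i=1}^{r} (Aa_i) \otimes (Bb_i) = 0$, so Lemma \ref{dim} forces
\[
p + q \leq r, \qquad p := \Dim \spa\{Aa_1,\dots,Aa_r\},\quad q := \Dim \spa\{Bb_1,\dots,Bb_r\}.
\]
I will use freely that $\Ker A \otimes \eX_2$ and $\eX_1 \otimes \Ker B$ are contained in $\Ker(A\otimes B)$, that $r(\cdot)$ is subadditive, and that a sum of $s$ simple tensors has rank at most $s$.

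First I would dispose of the degenerate case $p = 0$ or $q = 0$. If $q = 0$, every $b_i$ lies in $\Ker B$, so $u \in \eX_1 \otimes \Ker B$; put $y = a_1 \otimes b_1$. Then $y$ and $u - y = \sum_{i=2}^{r} a_i \otimes b_i$ both lie in $\eX_1 \otimes \Ker B \subseteq \Ker(A\otimes B)$, and the chain $r = r(u) \leq r(y) + r(u-y) \leq 1 + (r-1) = r$ must consist of equalities; this gives $r(y) = 1$, $r(u-y) = r-1 \geq 1$ (so $u-y \neq 0$, i.e.\ $y \neq u$) and $r(u) = r(y) + r(u-y)$, as needed. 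The case $p = 0$ is identical with $\eX_1 \otimes \Ker B$ replaced by $\Ker A \otimes \eX_2$.

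So assume $p \geq 1$ and $q \geq 1$; then $q \leq r - p \leq r - 1$. After reindexing, $\{Bb_1,\dots,Bb_q\}$ is a basis of $\spa\{Bb_1,\dots,Bb_r\}$, so for $q < j \leq r$ I can write $Bb_j = \sum_{k=1}^{q} \beta_{jk} Bb_k$ and set $b_j' := b_j - \sum_{k=1}^{q}\beta_{jk} b_k \in \Ker B$. A routine rearrangement then yields $u = u^{(1)} + u^{(2)}$ with
\[
u^{(1)} = \sum_{k=1}^{q} \Bigl( a_k + \sum_{j=q+1}^{r}\beta_{jk} a_j\Bigr) \otimes b_k, \qquad u^{(2)} = \sum_{j=q+1}^{r} a_j \otimes b_j' .
\]
Since $b_j' \in \Ker B$, we have $u^{(2)} \in \eX_1 \otimes \Ker B \subseteq \Ker(A\otimes B)$, hence also $u^{(1)} = u - u^{(2)} \in \Ker(A\otimes B)$. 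Now $u^{(1)}$ is a sum of $q$ simple tensors and $u^{(2)}$ a sum of $r-q$, so $r(u^{(1)}) \leq q$ and $r(u^{(2)}) \leq r - q$; together with $r = r(u) \leq r(u^{(1)}) + r(u^{(2)}) \leq q + (r-q) = r$ everything is again an equality, giving $r(u^{(1)}) = q \geq 1$, $r(u^{(2)}) = r-q \geq 1$ and $r(u) = r(u^{(1)}) + r(u^{(2)})$. Taking $y = u^{(1)}$ — nonzero, and $\neq u$ since $u - y = u^{(2)} \neq 0$ — finishes the contradiction, so $r(u) \leq 1$ and $u$ is simple.

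I expect the one point needing care to be securing the \emph{exact} rank equality $r(u) = r(y) + r(u-y)$ together with the non-vanishing of both pieces; the exactness is automatic from the sandwich $r \leq r(y)+r(u-y) \leq r$ (the middle term bounded above by the total number of simple tensors used), and the non-vanishing follows because in each case the two pieces are genuine sums of at least one simple tensor apiece, using $r\geq 2$ in the degenerate case and $1\leq q\leq r-1$ in the main one. Everything else — the rearrangement and the rank bookkeeping — is elementary, and no tool beyond Lemma \ref{dim} is required.
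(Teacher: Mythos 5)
Your proof is correct and follows essentially the same route as the paper: both arguments use Lemma \ref{dim} to force $B$ (or $A$) to have a nontrivial kernel on $\sR(u)$ (resp.\ $\sL(u)$), then split off the part of $u$ lying in $\eX_1\otimes\Ker B$ (resp.\ $\Ker A\otimes\eX_2$), which lies in $\Ker(A\otimes B)$ and contradicts minimality. The only cosmetic difference is that the paper immediately extracts a single nonzero $y_1\in\Ker B\cap\sR(u)$ and splits off the one simple tensor $x_1\otimes y_1$, whereas you split off the full rank-$(r-q)$ kernel component and argue via the rank sandwich.
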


\begin{proof}
By Lemma \ref{dim}, either  $\Ker B \cap \sR(u) \neq \{ 0\}$ or $ \Ker A \cap \sL(u) \neq \{ 0\}$. Suppose, for instance, that there exists 
$y_1 \in \Ker B \cap \sR(u)$ such that $y_1\neq 0$. Write  $u= \sum_{i=1}^n x_i \otimes y_i$, where  $x_i \in \eX_1, y_i \in \eX_2$ and $n= \ell(u)$. 
Then $x_1 \otimes y_1 \in \Ker(A \otimes B)$. Therefore $n=1$.
\end{proof}

\begin{proposition} \label{prop04}
Let $A, B \in \pL(\eX)$ be nonzero operators.  The equation
\begin{equation} \label{eq111}
\Chi \Mu_{A,B}=0
\end{equation}
 has a nontrivial solution in $\Ele(\pL(\eX))$ if and only if
$A$ is a right zero divisor or $B$ is a left zero divisor. Moreover, any minimal solution of \eqref{eq111} in  $\Ele(\pL(\eX))$ has length one.
\end{proposition}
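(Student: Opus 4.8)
The plan is to reduce the problem, via the injective homomorphism $\varphi$ of Lemma \ref{inj}, to a question about tensors in $\pL(\eX)\otimes\pL(\eX)^{op}$, where the multiplication operator $\Mu_{A,B}$ corresponds to the simple tensor $A\otimes B$. So I would first translate \eqref{eq111}: writing $\Chi=\sum_{i=1}^m\Mu_{C_i,D_i}$ with $\{C_1,\dots,C_m\}$ linearly independent and $m=\ell(\Chi)$, the equation $\Chi\Mu_{A,B}=0$ becomes $\sum_{i=1}^m\Mu_{C_iA,BD_i}=0$, i.e.\ $\sum_{i=1}^m (C_iA)\otimes(BD_i)=0$ in $\pL(\eX)\otimes\pL(\eX)^{op}$. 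Equivalently, the tensor $u=\sum_i C_i\otimes D_i$ lies in the kernel of the operator $\rR_A\otimes\rL_B$ (acting on the algebraic tensor product, with $\rR_A$ left-multiplication by $A$ on the first factor viewed appropriately and $\rL_B$ on the second — one must be a little careful with the $op$ convention, but the point is that it is a decomposable operator $A\otimes B$-type map on the tensor product).

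\textbf{Sufficiency.} For the easy direction, suppose $A$ is a right zero divisor, so $\overline{\Image A}\neq\eX$; pick a nonzero functional-type operator, i.e.\ a nonzero $C\in\pL(\eX)$ with $CA=0$ (such $C$ exists: take a nonzero bounded functional $f$ vanishing on $\overline{\Image A}$ and any nonzero $x$, and set $C=x\otimes f$, so $C\neq 0$ but $CA=0$). Then $\Mu_{C,I}\Mu_{A,B}=\Mu_{CA,B}=0$, giving a nontrivial length-one solution. Symmetrically, if $B$ is a left zero divisor, $\Ker B\neq\{0\}$, pick nonzero $D$ with $BD=0$ (e.g.\ $D=x_0\otimes g$ with $x_0\in\Ker B$), and $\Mu_{I,D}\Mu_{A,B}=\Mu_{A,BD}=0$. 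This also exhibits the minimal solution of length one in these cases.

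\textbf{Necessity and the "length one" conclusion.} This is the substantive part, and I expect the main obstacle to be handling the infinite-dimensional setting: Lemma \ref{rank1} is stated for finite-dimensional spaces, so I cannot apply it verbatim. The strategy is: by Lemma \ref{lem03} it suffices to analyze a \emph{minimal} solution $\Lambda$ of \eqref{eq111}; so assume $\Lambda\neq0$ is minimal and show $\ell(\Lambda)=1$, which simultaneously proves necessity (the length-one solution $\Mu_{C,D}$ with $CA\otimes BD=0$ forces $CA=0$ or $BD=0$, i.e.\ $A$ right zero divisor or $B$ left zero divisor). Write $\Lambda=\sum_{i=1}^n\Mu_{C_i,D_i}$ with $n=\ell(\Lambda)$ minimal and $u=\sum_i C_i\otimes D_i$ the corresponding rank-$n$ minimal tensor in the kernel of the relevant decomposable map. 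To invoke a finite-dimensional argument, restrict attention to the finite-dimensional subspaces $\sL(u)=\spa\{C_1,\dots,C_n\}$ and $\sR(u)=\spa\{D_1,\dots,D_n\}$ of $\pL(\eX)$; the maps $C\mapsto CA$ on $\sL(u)$ and $D\mapsto BD$ on $\sR(u)$ are linear maps between finite-dimensional spaces, and $u$ is a minimal tensor in the kernel of their tensor product restricted to $\sL(u)\otimes\sR(u)$ (minimality is preserved since the rank of a tensor does not depend on the ambient space). Now apply Lemma \ref{rank1} (with $\eX_1=\sL(u)$, $\eX_2=\sR(u)$, and $A,B$ replaced by these restriction maps) to conclude $u$ is simple, i.e.\ $n=1$. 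The only delicate point is checking that a minimal tensor of $V\subseteq\eX_1\otimes\eX_2$ remains a minimal tensor when the ambient space is enlarged to $\pL(\eX)\otimes\pL(\eX)^{op}$ and conversely — this follows because $r(\cdot)$ is intrinsic and $V$ is the same subspace; so the reduction is legitimate. Finally, once $n=1$, $\Lambda=\Mu_{C,D}$ with $C,D\neq0$ and $(CA)\otimes(BD)=0$ in the tensor product, hence $CA=0$ or $BD=0$ by Lemma \ref{dim} (or simply because a simple tensor vanishes iff a factor does), which gives exactly that $A$ is a right zero divisor or $B$ is a left zero divisor.
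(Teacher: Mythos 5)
Your proof is correct and follows essentially the same route as the paper: reduce to a minimal solution, restrict $\rR_A$ and $\rL_B$ to the finite-dimensional spaces $\sL(\Lambda)$ and $\sR(\Lambda)$, and apply Lemma \ref{rank1} to conclude the minimal solution has length one. The paper leaves the sufficiency direction and the $op$-convention bookkeeping implicit, which you spell out, but there is no substantive difference.
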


\begin{proof}
Assume that $\Delta$ is a minimal solution of \eqref{eq111}. Let $\widehat{B}$ and $\widetilde{A}$ denote the restriction of $\rL_B$ to $\sR(\Delta)$ 
and the restriction of  $\rR_A$ to $\sL(\Delta)$, respectively. Set $\Delta= \sum_{i=1}^n \Mu_{E_i, F_i}$. Then, by Lemma \ref{inj},
\begin{equation*}
(\widetilde{A} \otimes \widehat{B} )(\sum_{i=1}^n  E_i \otimes F_i)=0.
\end{equation*}
By Lemma \ref{rank1}, $\Delta$ has length one. Now it is easy to see that either $A$ is a right zero divisor or $B$ is a left zero divisor.
\end{proof}

\begin{remark} 
Let $\Delta \in \Ele(\pL(\eX)) $ be an elementary operator of arbitrary length. Suppose that there exists a multiplication operator $\Mu_{A, B}$ 
such that  $\Mu_{A,B} \Delta=0$. Then there exists $\Delta' \in \Ele(\pL(\eX))$ such that  $\ell(\Delta')+ \ell (\Delta- \Delta')= \ell (\Delta)$,  
$A \mathsf{L}(\Delta')=0$ and $ \mathsf{R}(\Delta- \Delta') B=0$.
\end{remark}

Now we consider elementary operators of length $2$. In \cite{AS}, the authors provide an interesting characterization of two-dimensional spaces
of complex matrices of bounded rank.  We will need a more detailed description. It should be pointed out that \cite[Corollary 1]{AS} can be deduced 
from our characterization.

\begin{lemma}\label{brank} 
Let $n, m,r\in \bN$ and let $\pS$ be a two-dimensional subspace in $\bM_{m, n}$, the space of all complex $m\times n$ matrices.
Suppose that  $1 \leq r \leq \min\{n,m\}$, $\rk(\pS)=n-r$, $\ker \pS= \{0\}$, and $\pS \bC^n= \bC^m$. Then $\pS$ is equivalent to the following space 
of  matrices
\begin{equation*}
\left\{  \left(   \begin{array}{ccccc}
A_{n_1}(\alpha, \beta) & 0 & \cdots & 0&0 \\
0 & \ddots & \ddots &\vdots& \vdots  \\
\vdots&\ddots&\ddots&0&\vdots\\
\vdots & \cdots &0& A_{n_r} (\alpha, \beta) & 0 \\
0 & \cdots & 0 &0&  * \\
\end{array} \right):\quad \alpha, \beta \in \bC \right\},
\end{equation*}
where
\begin{equation*}
A_{n_i} (\alpha, \beta)= \left(\begin{array}{ccccc}
                           \beta & \alpha & 0 &\cdots& 0 \\
                           0 & \ddots& \ddots & \ddots &\vdots\\
                           \vdots& 0 & \ddots & \ddots&\vdots \\
                           0 &\cdots  &\cdots& \beta & \alpha \\
                         \end{array}	
                       \right)  \in \bM_{n_i-1,n_i}.
\end{equation*}
\end{lemma}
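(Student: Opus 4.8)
\textbf{Proof proposal for Lemma \ref{brank}.}

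The plan is to argue by induction on $n$, exploiting the hypotheses $\ker\pS=\{0\}$ and $\pS\bC^n=\bC^m$ together with the bounded-rank condition $\rk(\pS)=n-r$. Choose a basis $\{S_0,S_1\}$ of $\pS$ and study the pencil $S(\lambda)=S_0+\lambda S_1$ for $\lambda\in\bC$, supplemented by $S_1$ itself (the point $\lambda=\infty$). Since $\rk(\pS)=n-r<n$, every member of the pencil is singular, so the Kronecker canonical form of the pencil $(S_0,S_1)$ can have no regular (square, nonsingular) part; its indecomposable summands are therefore of four types: the minimal index blocks $L_\varepsilon$ (of size $\varepsilon\times(\varepsilon+1)$) and their transposes $L_\eta^{T}$ (of size $(\eta+1)\times\eta$), together with nilpotent Jordan blocks $N_k$ (for the eigenvalue $\infty$) and Jordan blocks $J_k(\mu)$ (for finite eigenvalues). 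The block $L_\varepsilon$ with $\varepsilon=0$ contributes a zero column, i.e.\ a nonzero vector annihilated by every matrix in $\pS$, hence by $\ker\pS=\{0\}$ no such block occurs; dually $L_\eta^{T}$ with $\eta=0$ contributes a zero row, contradicting $\pS\bC^n=\bC^m$, so that is excluded too. This is the conceptual heart of the argument, and I expect the bookkeeping that converts the Kronecker form into the stated block-diagonal normal form — in particular showing the Jordan and $L_\eta^{T}$ parts can be absorbed into the final ``$*$'' row and that the $L_{\varepsilon}$ blocks of size $\varepsilon\ge 1$ are precisely the $A_{n_i}(\alpha,\beta)$ after a change of basis $(\alpha,\beta)\leftrightarrow(\lambda,1)$ — to be the main obstacle.

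More concretely, after passing to Kronecker form we have $\pS$ equivalent to a block-diagonal family whose blocks are, up to relabelling of the basis $(\alpha,\beta)$ of $\pS$,
\begin{equation*}
A_{n_1}(\alpha,\beta),\ \ldots,\ A_{n_r}(\alpha,\beta),\qquad \text{and a remaining block } R(\alpha,\beta),
\end{equation*}
where each $A_{n_i}$ is $(n_i-1)\times n_i$ and has rank $n_i-1$ for every $(\alpha,\beta)\ne(0,0)$, while $R(\alpha,\beta)$ collects the Jordan pieces $N_k$, $J_k(\mu)$ together with any $L_\eta^{T}$ pieces. I would then count: the $L_\varepsilon$ blocks contribute a rank deficiency of exactly $1$ each (the number of columns exceeds the number of rows by one, and each such block has full row rank on $\pS\setminus\{0\}$), whereas the Jordan and $L_\eta^{T}$ summands are generically of full rank. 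Since the total column–minus–row deficit of $\pS$ is $n-m$ and the generic corank is $n-\rk(\pS)=r$, exactly $r$ of the $L_\varepsilon$ blocks are present and the block $R$ must be generically nonsingular, hence square; absorbing it as the trailing $*$ entry and reordering so the $A_{n_i}$ come first produces precisely the displayed normal form.

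The one point that needs care is the claim that in the Kronecker decomposition the $L_\varepsilon$ part alone accounts for the full rank drop, i.e.\ that no cancellation of coranks occurs across blocks; this follows because rank is additive over a direct sum of blocks and each non-$L_\varepsilon$ block attains full rank for a generic (in fact for all but finitely many) choice of $(\alpha:\beta)\in\bP^1$, so the corank of a generic member of $\pS$ equals the number of $L_\varepsilon$ blocks. Finally, to recover the explicit shape of $A_{n_i}(\alpha,\beta)$ one notes that the minimal-index block $L_\varepsilon$ in Kronecker form is exactly $\lambda\,[\,I_\varepsilon\ \ 0\,]+[\,0\ \ I_\varepsilon\,]$; writing the pencil parameter as $\alpha=\lambda$, $\beta=1$ and then extending linearly to all $(\alpha,\beta)\in\bC^2$ gives the bidiagonal matrix $A_{n_i}(\alpha,\beta)$ with $n_i=\varepsilon+1$. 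This completes the reduction; the converse inclusion (that the displayed space indeed satisfies $\rk=n-r$, trivial kernel, and full image) is an immediate direct computation on the block form and I would state it in one line.
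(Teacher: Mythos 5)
Your argument is correct in substance, but it takes a genuinely different route from the paper. You invoke the Kronecker canonical form of the singular pencil $S_0+\lambda S_1$ as a black box and then do the bookkeeping: the hypotheses $\ker\pS=\{0\}$ and $\pS\bC^n=\bC^m$ exclude the degenerate blocks $L_0$ and $L_0^{T}$, lower semicontinuity identifies $\rk(\pS)$ with the generic rank, and additivity of rank over the block decomposition shows that the generic corank equals the number of $L_\varepsilon$ summands, which is therefore $r$; the surviving $L_{\varepsilon_i}$ become the $A_{n_i}(\alpha,\beta)$ and everything else is swept into the $*$ corner. The paper instead proves the relevant portion of Kronecker's theorem from scratch: it builds the chains $B_1y_1=0$, $B_2y_k=B_1y_{k+1}$ by hand, proves they terminate by a rank-perturbation argument (this is where $\rk(\pS)=n-r$ is actually used, via \cite[Lemma 2.1]{BrSe}), normalizes each chain to the form $A_{n_i}$, and finally triangularizes the action on a complementary subspace to produce the $*$ block. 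Your version is much shorter but leans on a substantial classical theorem; the paper's is longer but self-contained, in keeping with its stated aim of using only elementary arguments. Two slips in your write-up should be repaired, though neither is fatal. First, the assertion that the pencil ``can have no regular part'' is false, and you contradict it in the same sentence by listing the Jordan blocks $J_k(\mu)$ and $N_k$, which \emph{are} the regular part: for instance the space of matrices $\bigl(\begin{smallmatrix}\beta&\alpha&0\\ 0&0&\alpha\end{smallmatrix}\bigr)$ satisfies all the hypotheses with $r=1$ and has a nontrivial regular part, which simply lands in the $*$ block. Second, the residual block $R$ need not be square: an $L_\eta^{T}$ summand with $\eta\geq 1$ has size $(\eta+1)\times\eta$, has full column rank for every nonzero $(\alpha,\beta)$, and survives all your exclusions. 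What your rank count actually shows is that $R$ has trivial kernel for a generic member of $\pS$, which is all you need, since the $*$ entry in the target normal form is unconstrained.
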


\begin{proof} 
Choose  $B_1 \in \pS$ such that $\rk( B_1)=n-r$ and let $0 \neq y_1 \in \bC^n$ be such that $B_1y_1=0$. 
Pick $B_2 \in \pS \setminus \bC B_1$. We claim that there exists a subspace $\eW_1$
of $\bC^n$ containing $y_1$ such that $\pS \eW_1 \subseteq B_1 \eW_1$, space $\pS|_{\eW_1} $ has constant
rank, and $B\eW_1= B_1 \eW_1$, for all $B \in \pS$. 
Suppose, towards a contradiction, that there exists a family $\{y_1, \ldots, y_t\}$ of elements of $\bC^n$ such that
\begin{equation*}
B_1 y_1=0,\quad B_2 y_k= B_1 y_{k+1}\quad (1 \leq k \leq t-1) \quad \text{and} \quad  B_2y_t   \not\in  B_1 \bC^n,
\end{equation*}
with $t$ minimal ($t$ can be equal to $1$). Set $\eW'_1= \spa \{y_1, \ldots, y_t\}$. Observe that for every nonzero
complex number $\lambda$ one has $(B_1+ \lambda B_2) \eW'_1= B_1 \eW'_1+ \mathbb C B_2 y_t$.  Let $\eH$
be a subspace of $\bC^n$  of minimal dimension such that $B_1 \bC^n= B_1 \eW'_1 \oplus B_1 \eH$.  Using \cite[Lemma 2.1]{BrSe}
we see that one can  choose a nonzero $\lambda \in \bC$ such that $\rk(B_1+ \lambda B_2)|_\eH \geq \rk (B_1|_\eH)$ and 
$\{ (B_1+ \lambda B_2)y_2, \ldots, (B_1+ \lambda B_2)y_t, B_2 y_t\}$ is linearly independent modulo space $(B_1+ \lambda B_2)\eH$.   
Since
\begin{equation*}
(B_1+ \lambda B_2) \eW'_1= \spa \{B_1y_2, \ldots, B_1y_t, B_2 y_t\},
\end{equation*}
the rank of $B_1+ \lambda B_2$ is greatest than the rank of $B_1$, which is a contradiction.
Now suppose that we have constructed $y_2, \ldots, y_{t} \in \bC^n$ such that
\begin{equation*}
 B_2 y_k= B_1 y_{k+1} \quad \text{and} \quad B_2 y_{t} \in \spa \{B_2 y_1, \ldots, B_2 y_{t-1} \}\quad  (1 \leq k \leq t-1).
\end{equation*}
Set $\eW_1= \spa \{y_1, \ldots, y_t\}$. Then $\mathcal S \eW_1 \subseteq B_1 \eW_1 $.
 A straightforward computation shows that there exists a basis $ \{z_1, \ldots, z_t\}$ of $\eW_1$  such that
\begin{equation*}
B_1z_1=B_2 z_{t}=0 \quad \text{and}\quad B_2 z_k= B_1 z_{k+1},\quad \text{for}\quad 1 \leq k \leq t-1.
\end{equation*}
(Indeed, write  $B_2 y_{t}=\sum_{i=1}^{t-1} \alpha_i B_2 y_i$, where $\alpha_1, \ldots, \alpha_{t-1} \in \bC$, and put 
$z_1=y_1$ and $z_k=y_k-\sum_{i=t-k+1}^{t-1} \alpha_i y_{i-t+k}$, for $2 \leq k \leq t$).
Observe that for every nonzero $B \in \mathcal S$ we have $B_1 \eW_1= B \eW_1$. Then, with respect to the bases $\{z_1, \ldots, z_t\}$ 
and $\{B_1z_2, \ldots,B_1z_t\}$ the matrix  $B_2|_{\eW_1}$ has the form
\begin{equation*} 
\left( \begin{array}{cc}
         I_ {t-1} & 0 \\ 
\end{array} \right),
\end{equation*}
where $I_{t-1}$ is the identity matrix of order $t-1$. 
Therefore, for any complex numbers $\alpha, \beta$, the matrix $(\alpha B_1+ \beta B_2)|_{\eW_1}$ has form $A_{t}(\alpha, \beta)$. 
The claim is proved.

Next, using the above procedure, we construct a subspace $\eW=\oplus_{i=1}^r \eW_i$ of $\bC^n$, such that 
$B\eW_i =B_1\eW_i$, for every $B \in \mathcal S$, $\dim \eW_i=t_i$, and $\pS |_{\eW_i}$ has constant rank $t_i-1$. 
Indeed, suppose that we have constructed $l$ subspaces $\eW_1, \ldots, \eW_l$ of $\bC^n$ such that  
$\Dim(\eW_i)= t_i$, and $\eW_i= \spa\{z_1^i, \ldots, z_{t_i}^i\}$ with
\begin{equation*}
B_1 z_1^i=B_2 z_{t_i}^i=0\quad \text{and}\quad B_2 z_k^i= B_1 z_{k+1}^i, \quad \text{for}\quad 1 \leq k \leq t_i-1 .
\end{equation*}
Suppose that $\eW_1, \ldots, \eW_l$ are chosen such that $(t_1, \ldots, t_l)$ is minimal (with respect to the lexicographical order).
Set $\eW'= \eW_1 \oplus \cdots \oplus \eW_l$. Then, for every nonzero $B \in \pS$, one has $B_1 \eW'= B \eW'$. Suppose that $r \geq l+1$.
Pick  $z'_1 \in  \Ker B_1 \setminus \eW'$. Choose $z'_2, \ldots, z'_s \in \bC^n$ such that
\begin{equation*}
 B_2 z'_k= B_1 z'_{k+1} \quad (1 \leq k \leq s-1) \quad \text {and }  \quad B_2 z'_s  \in \spa \{B_2 z'_1, \ldots, B_2 z'_{s-1} \} + B_2 \eW'.
\end{equation*}
For $w = \sum \alpha_i^j z_i ^j \in \eW'$, set $w_{-h}= \sum \alpha_i^j z_{i-h}^j$, where $z_{i-h}^j=0$ if $h \geq i$.
Write  $B_2 z'_s=\sum_{i=1}^{s-1} \alpha_i B_2 z'_i+ B_2 w$, where $\alpha_1, \ldots, \alpha_{s-1} \in \bC$ and $w \in \eW'$.  
Observe that we can suppose that $w \in \sum_{j=1}^l \spa \{z_1^j, \ldots, z_{t_j-1}^j \}$. Put $z_1^{l+1}= z'_1$, 
$z_k^{l+1}= z'_k-\sum_{i=s-k+1}^{s-1} \alpha_i z'_{i-s+k}-w_{-s+k}$, for $2 \leq k \leq s$, $s=t_{l+1}$ and 
$\eW_{l+1}= \spa \{ z_1^{l+1}, \ldots, z _{t_{l+1}}^{l+1}  \} $. Then
\begin{equation*}
B_1 z_1^{l+1}=B_2 z_{t_{l+1}}^{l+1}=0 \quad \text{and}\quad B_2 z_k^{l+1}= B_1 z_{k+1}^{l+1}\quad \text{for} \quad 1 \leq k \leq t_{l+1}-1.
\end{equation*}
Using this process, we construct $\eW$. Observe again that, for every nonzero $B \in \pS$,  $B_1 \eW= B \eW$.  Moreover, for every $1 \leq i \leq r$, 
the space $\pS|_{\eW_i}$ is equivalent to $\{A_{t_i} (\alpha, \beta): \alpha, \beta \in \bC \}.$

Now let $\eH$ be a subspace of $\bC^n$ of minimal dimension such that  $B_1 \bC^n \oplus B_2 \eH= \pS \bC^n$. Write $\bC^n= \eW \oplus \eH \oplus \eZ'$. 
Clearly, we can assume that $B_2 \eZ' \subseteq B_1 \bC^n$. Observe that $B_{1}|_{\eZ'}$ is injective.  Let $S: B_1 \eZ' \rightarrow  \eZ'$ be linear 
such that $SB_{1}|_{\eZ'}=I$. Let $\tau: \pS \bC^n \rightarrow B_1 \eZ' $ be the natural projection. Choose a basis $\{z'_1, \ldots,  z'_s\}$  of $\eZ'$ 
such that the matrix representation of $\pS \tau B_2|_{\eZ'}$ is upper triangular.  There exists $\lambda \in \bC$ such that $\tau B_2 z'_1 = \lambda B_1 z'_1$. 
Hence $B_2 z'_1=  \lambda B_1 z'_1+ B_1 v'_1 +B_1 \mu_1$, where $v'_1 \in \eW$ and $\mu_1 \in \eH$. Choose $v_1 \in \eW$ such that 
$B_1 v'_1= (\lambda B_1 - B_2) v_1$ and put $z_1= z'_1+ v_1$. Then $B_2 z_1= \lambda B_1  z_1+B_1 \mu_1$.  Suppose we have constructed 
$z_2, \ldots, z_k$ such that $z_i= z'_i+ v_i$, $v_i \in \eW$ and $B_2 z_i \in B_1 \;(\eH \oplus \spa \{ z_1, \ldots, z_i\})$, for $2 \leq i \leq k$.
Write $\tau B_2 z'_{k+1}= \sum_{i=1}^{k+1} \alpha_i B_1 z'_i$. Then there exists $v'_{k+1} \in \eW, \mu_{k+1} \in \eH$ such that
\begin{equation*}
 B_2 z'_{k+1}= \sum_{i=1}^{k} \alpha_i B_1 z_i + \alpha_{k+1}B_1 z'_{k+1}+ B_1 (v'_{k+1} - \sum_{i=1}^{k} \alpha_i v_i)+ B_1 \mu_{k+1}.
\end{equation*}
Let $v_{k+1} \in \eW$ be such that $B_1 (v'_{k+1} - \sum_{i=1}^{k} \alpha_i v_i)= (\alpha_{k+1} B_1 - B_2) v_{k+1}$. Put $z_{k+1}= z'_{k+1}+ v_{k+1}$.  
Then $B_2 z_{k+1} = \sum_{i=1}^{k+1} \alpha_i B_1 z_i+ B_1 \mu_{k+1}$. We have thereby shown that there exists a subspace $\eZ$ of $\bC^n$ such that 
$\bC^n= \eW  \oplus \eH\oplus \eZ$ and   $B_2 \eZ \subseteq B_1 (\eZ\oplus \eH)$. Now the desired conclusion follows easily.
\end{proof}

\begin{proposition}\label{rank2} 
Let $\eX_1, \eX_2, \eY_1, \eY_2$ be finite-dimensional complex vector spaces and let $\Delta \in \pL(\eX_1, \eY_1)\otimes \pL(\eX_2, \eY_2)$ be 
a tensor of rank $2$. Suppose that $u \in \Ker \Delta$ is a minimal tensor of rank $n \geq 2$. Then there exist simple tensors 
$\Mu_1, \Mu_2 \in \pL(\eX_1, \eY_1)\otimes \pL(\eX_2, \eY_2)$ and simple tensors $u_1, \ldots, u_n  \in \eX_1 \otimes \eX_2$ such that 
$\Delta= \Mu_1+ \Mu_2$, $u= u_1+ \cdots + u_n$,
\begin{equation*}
\Mu_2 u_1 = \Mu_1u_n=0\quad \text{and}\quad \Mu_1u_k+ \Mu_2u_{k+1}=0\qquad (1 \leq k \leq n-1).
\end{equation*}
\end{proposition}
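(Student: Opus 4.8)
The statement is the rank‑$2$ analogue of Lemma \ref{rank1} (which is the rank‑$1$ case), and the plan is to imitate that proof: reduce to a non‑degenerate situation, then iterate the ``$\Ker B\cap\sR(u)$ or $\Ker A\cap\sL(u)$'' dichotomy supplied by Lemma \ref{dim} to build the chain, using minimality of $u$ at every step to guarantee that the chain can be continued.

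First I would carry out the reductions. Write $\Delta=A_1\otimes B_1+A_2\otimes B_2$ with $\{A_1,A_2\}$, $\{B_1,B_2\}$ linearly independent (so $\ell(\Delta)=2$), and replace $\eX_1,\eX_2$ by $\sL(u),\sR(u)$ and $\eY_1,\eY_2$ by $A_1\sL(u)+A_2\sL(u)$, $B_1\sR(u)+B_2\sR(u)$; then $\Dim\eX_1=\Dim\eX_2=n$ and $u$ is a full‑rank tensor. Minimality of $u$ is inherited, and it survives the restriction: a rank‑additive splitting of $u$ inside the restricted $\Ker\Delta$ would be one inside the original $\Ker\Delta$. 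The point requiring proof here is that the restricted $\Delta$ still has length $2$. If $\alpha A_1+\beta A_2$ vanished on $\sL(u)$ for some $(\alpha,\beta)\ne(0,0)$, then after re‑choosing the basis of $\sL(\Delta)$ (and the matched basis of $\sR(\Delta)$ via Lemma \ref{lem02}) one has $A_1|_{\sL(u)}=0$, hence $\Delta u=(A_2\otimes B_2)u=0$; applying Lemma \ref{dim} to this $n$‑term vanishing tensor yields $0\ne x\in\Ker A_2\cap\sL(u)$ or $0\ne y\in\Ker B_2\cap\sR(u)$, and either way one produces a nonzero simple tensor lying in $\Ker\Delta$ — it is annihilated by $A_2\otimes B_2$ by choice, and by $A_1\otimes B_1$ because its left factor can be taken in $\sL(u)\subseteq\Ker A_1$ — which is a rank‑additive summand of $u$, contradicting minimality since $n\ge 2$. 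The same argument with the roles of $\eX$ and $\eY$ exchanged handles the $B$‑side.

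Now with $u=\sum_{i=1}^n x_i\otimes y_i$ ($\{x_i\},\{y_i\}$ bases of $\eX_1,\eX_2$) the hypothesis reads $\sum_i A_1x_i\otimes B_1y_i+\sum_i A_2x_i\otimes B_2y_i=0$, a vanishing tensor with $2n$ simple summands. As in the proof of Lemma \ref{rank1}, Lemma \ref{dim} forces a linear relation among the left or the right factors; I would use it, after a rescaling of the basis of $\sL(\Delta)$ and an adapted choice of the bases of $\eX_1,\eX_2$, to locate the first link: a simple summand $u_1$ of $u$ with $\Mu_2\,u_1=0$, where $\Mu_1=A_1\otimes B_1$, $\Mu_2=A_2\otimes B_2$ are the rechosen coefficients. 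If $\Mu_1u_1$ were also $0$ then $u_1\in\Ker\Delta$ would split $u$, so $\Mu_1u_1\ne 0$, and minimality forces this ``defect'' to be cancelled one step further along: there is a second simple summand $u_2$ of $u$ with $\Mu_1u_1+\Mu_2u_2=0$. Iterating — at each stage invoking minimality to show the defect $\Mu_1u_k$ neither vanishes prematurely (which would split $u$) nor fails to be absorbed — produces simple tensors $u_1,\dots,u_m$ with $u':=u_1+\dots+u_m$ a rank‑additive summand of $u$ in $\Ker\Delta$ satisfying $\Mu_2u_1=0$ and $\Mu_1u_k+\Mu_2u_{k+1}=0$ for $1\le k\le m-1$. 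By minimality $u'=u$ and $m=\Dim\sL(u)=n$; and since $0=\Delta u=\sum_{k}(\Mu_1u_k+\Mu_2u_k)$ telescopes to $\Mu_2u_1+\sum_{k=1}^{n-1}(\Mu_1u_k+\Mu_2u_{k+1})+\Mu_1u_n$, we get $\Mu_1u_n=0$ for free. This telescoping is exactly the mechanism already present in the chains $B_1y_1=0,\ B_2y_k=B_1y_{k+1}$ built in the proof of Lemma \ref{brank}.

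The main obstacle I expect is the inductive step. Unlike in Lemma \ref{rank1}, the chain must be simultaneously a chain for the pencil $\sL(\Delta)$ acting on the first tensor factors and for $\sR(\Delta)$ acting on the second, the two being linked through $u$; one has to show these two one‑sided constructions can be run in lock‑step, and — the delicate part — that minimality of $u$ rules out the chain either stalling (a defect $\Mu_1u_k$ that cannot be absorbed) or branching. Making this precise is where the argument has to upgrade Lemma \ref{rank1} using the matrix‑pencil bookkeeping behind Lemma \ref{brank}.
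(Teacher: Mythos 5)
Your high-level strategy coincides with the paper's: restrict everything to $\sL(u)$ and $\sR(u)$, use Lemma \ref{dim} to force a degeneracy on one tensor factor, and then extract a chain. Your preliminary reductions are correct and essentially identical to what the paper does (minimality passes to the restriction; if some nonzero member of the pencil vanished on $\sL(u)$ or $\sR(u)$, a simple rank-additive summand of $u$ would lie in $\Ker\Delta$, contradicting minimality — this is the paper's observation that $\Ker\pS=\{0\}$), and the telescoping remark that $\Mu_1u_n=0$ follows from the other relations is fine.

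However, the core of the proof is missing, and the mechanism you propose for it would not work as stated. "Iterating the dichotomy supplied by Lemma \ref{dim}" only ever yields the dimension inequality $\Dim(\sL(\Delta)\sL(u))+\Dim(\sR(\Delta)\sR(u))\le 2n$; it does not hand you a simple summand $u_1$ of $u$ annihilated by a specific $\Mu_2$, and — the point you yourself flag as delicate — minimality alone does not show that the defect $\Mu_1u_k$ is cancelled by $\Mu_2$ applied to a \emph{single} further simple summand $u_{k+1}$: a priori the cancellation could be spread over several summands, or force a different splitting of $u$ at each stage. Ruling this out is exactly the content of the structure theory of the two-dimensional operator space $\pS=\{\rL_B|_{\sR(u)}:B\in\sR(\Delta)\}$, and the paper's proof hinges on a case distinction that your sketch omits entirely: when $\pS$ contains an element of full rank $n$ one takes a Jordan basis for $\widehat{B}_1^{-1}\widehat{B}_2$, uses minimality to show there is a single Jordan block (with eigenvalue $0$), and then reads off the relations $A_2x_{k+1}=-A_1x_k$ and $A_1x_n=0$ from the linear independence of $B_1y_1,\dots,B_1y_n$; when $\rk(\pS)\le n-1$ one must instead invoke the Kronecker-type classification of Lemma \ref{brank} and use minimality to kill the complement $\eZ$ and all but one block $\eW_1$. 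You mention Lemma \ref{brank} only as "bookkeeping" to be supplied later and never carry out either case, so the inductive step — the heart of the proposition — is not proved.
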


\begin{proof}
Let $\Delta= \sum_{i=1}^2 A_i \otimes B_i$ and $u= \sum_{i=1}^n x_i \otimes y_i$. Then $\sum_{j=1}^n \sum_{i=1}^2 A_i x_j \otimes B_i y_j =0.$
Hence, by Lemma \ref{dim}, 
\begin{equation*}
\Dim(\spa \{ A_ix_j: 1 \leq i \leq 2, 1 \leq j \leq n \}) + \Dim(\spa \{ B_iy_j: 1 \leq i \leq 2, 1 \leq j \leq n \}) \leq 2n.
\end{equation*}
Therefore, either $\Dim(\sL(\Delta) \sL(u)) \leq n$ or $\Dim(\sR(\Delta)\sR(u)) \leq n$. Suppose, for instance, that $\Dim(\sR(\Delta)\sR(u)) \leq n$. 
For each $B \in \sR(\Delta)$, let $\widehat{B}: \sR(u) \rightarrow \sR(\Delta) \sR(u)$ denote the restriction of  $B$ to $\sR(u)$ and set 
$\pS=\{\widehat{B}: B \in \sR(\Delta)\}$. Since $u$ is a minimal element of $\Ker \Delta$, then $\Ker \, \pS= \{0\}$, (otherwise, set 
$u= \sum_{i=1}^n x_i \otimes y_i$, where $\pS y_1=0$; then $\Delta (x_1 \otimes y_1)=0$, a contradiction). Choose $B_1 \in \sR(\Delta)$ with the property 
that $\rk(B_1)= \rk(\pS)$. We distinguish two cases.

Case 1. Suppose first that $\rk(\pS)=n$. Then $\Dim(\sR(\Delta)\sR(u))=n$ and $\widehat{B}_1$ is bijective. Pick a nonzero element $ B_2 \in \sR(\Delta)$ 
which is not injective. Write $\Delta= \sum_{i=1}^2 \Mu_{A_i, B_i}$ for suitable $A_1, A_2 \in \sL(\Delta)$. Choose a Jordan basis $\{y_1, \ldots, y_n\}$  
for $\widehat{B}_1^{-1}\widehat{B}_2$ and suppose that $\{y_1, \ldots, y_{i_1}\}$ is associated to the first block, with $\widehat{B}_1^{-1}\widehat{B}_2 y_1=0$. 
Then 
\begin{equation*} 
B_2 y_1=0,\qquad B_2 y_{k+1}= B_1 y_k, \quad \text { for } 1 \leq k \leq i_1-1,  
\end{equation*}  
and 
\begin{equation*} 
\widehat{B}_1^{-1}\widehat{B}_2 (\spa \{y_{i_1+1}, \ldots, y_n\}) \subseteq \spa \{y_{i_1+1}, \ldots, y_n\}.
\end{equation*}
Write $u= \sum_{i=1}^n x_i \otimes y_i$ for  suitable $x_1, \ldots, x_n \in \mathsf{L}(u)$. We have
\begin{equation*}
\sum_{k=1}^{i_1-1} (A_2 x_{k+1} + A_1 x_k ) \otimes B_1 y_k  + A_1 x_{i_1} \otimes B_1 y_{i_1} + \sum_{\substack{i+1 \leq k \leq n \\  1\leq j\leq 2}}  A_j x_k  \otimes B_jy_k  = 0.
\end{equation*}
Since $B_2 y_k \in \spa \{B_1y_{i_1+1}, \ldots, B_1 y_n\}$ for all $k \geq i_1+1$ and $\widehat{B}_1$ is injective, we have
\begin{equation*}
\sum_{k=1}^{i_1-1} (A_2 x_{k+1} + A_1 x_k ) \otimes B_1 y_k  + A_1 x_{i_1} \otimes B_1 y_{i_1} =\sum_{\substack{i+1 \leq k \leq n \\  1\leq j\leq 2}}  A_j x_k  \otimes B_jy_k  = 0.
\end{equation*}
But $u$ is a minimal tensor in $\ker \Delta$, which gives $i_1=n$. Moreover, we have
\begin{equation*}
A_1x_n = B_2 y_1=0,\quad B_2 y_k=B_1 y_{k -1}\quad \text {and }\quad A_2x_k= -A_1 x_{k-1} \qquad  (2 \leq k \leq n).
\end{equation*}

Case 2. Suppose that $\rk(\pS) \leq n-1$. Set $\rk(\pS)= n-r$, where $1 \leq r \leq n-1$. Pick $B_2 \in \sR(\Delta) \setminus \bC B_1$.  
By Lemma \ref{brank}, there exist subspaces  $\eW_1, \ldots, \eW_r, \eZ$ of $\sR(u)$ such that 
$B_2 \eW_i = B_1 \eW_i$, $\pS \eZ \cap  \pS (\eW_1 + \cdots \eW_r)= \{ 0\}$ and $\pS \eW_i \cap \pS \eW_j= \{ 0\}$, for $i \neq j$. 
Since $u$ is a minimal element of $\Ker \Delta$ it has to be $\eZ= \{ 0\}$. The same argument implies that $\sR(u)= \eW_1$. We have thereby shown that
$\sR(u)= \spa\{y_1, \ldots, y_n\}$, where
\begin{equation*}
B_1y_1=B_2y_n=0  \quad \text {and } \quad B_2y_k= B_1 y_{k+1}, \; (1 \leq k \leq n-1).
\end{equation*}
Write $u= \sum x_i \otimes y_i$ and $\Delta= \sum_{i=1}^2 \Mu_{A_i, B_i}$. Then one has $A_2 x_k= -A_1 x_{k+1}$, for $1 \leq k \leq n-1$. 
Set $u_i= x_{n+1-i} \otimes y_{n+1-i}$. This yields the desired result.
\end{proof}

\begin{theorem}\label{zerodiv} 
Let $\Psi \in \Ele(\pL(\eX))$ be an elementary operator of length $2$. Suppose that the equation $\Chi \Psi=0$ has a minimal solution  
$\Phi \in \Ele(\pL(\eX))$ of length $n\geq 2$. Then there exist multiplication operators  $\Gamma_1, \ldots, \Gamma_n, \Mu_1, \Mu_2$ such that 
$ \Psi=\Mu_1+ \Mu_2$, $\Phi= \Gamma_1+ \cdots+ \Gamma_n$, and
\begin{equation*}
\Gamma_1\, \Mu_2= \Gamma_n \,\Mu_1=0, \quad \Gamma_k \,\Mu_1+ \Gamma_{k+1}\,\Mu_2=0 \qquad (1 \leq k \leq n-1).
\end{equation*}
\end{theorem}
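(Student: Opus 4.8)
The plan is to turn the operator equation $\Chi\Psi=0$ into a tensor equation over finite-dimensional spaces and then apply Proposition~\ref{rank2}. Fix minimal representations $\Psi=\Mu_{C_1,D_1}+\Mu_{C_2,D_2}$ and $\Phi=\sum_{i=1}^n\Mu_{E_i,F_i}$; by Lemma~\ref{inj} the sets $\{C_1,C_2\}$, $\{D_1,D_2\}$, $\{E_1,\dots,E_n\}$ and $\{F_1,\dots,F_n\}$ are linearly independent, so $\Dim\sL(\Phi)=\Dim\sR(\Phi)=n$. Put $\eX_1=\sL(\Phi)$, $\eX_2=\sR(\Phi)$, $\eY_1=\spa\{E_iC_j\}=\sL(\Phi)\sL(\Psi)$, $\eY_2=\spa\{D_jF_i\}=\sR(\Psi)\sR(\Phi)$, which are finite-dimensional. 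For $C\in\sL(\Psi)$ let $\rho_C\in\pL(\eX_1,\eY_1)$ be $\rho_C(E)=EC$, and for $D\in\sR(\Psi)$ let $\lambda_D\in\pL(\eX_2,\eY_2)$ be $\lambda_D(F)=DF$. Set $\Delta=\rho_{C_1}\otimes\lambda_{D_1}+\rho_{C_2}\otimes\lambda_{D_2}$ and $u=\sum_{i=1}^n E_i\otimes F_i\in\eX_1\otimes\eX_2$. Using $\Mu_{E,F}\Mu_{C,D}=\Mu_{EC,DF}$, Lemma~\ref{inj}, and the injectivity of the embedding $\eY_1\otimes\eY_2\hookrightarrow\pL(\eX)\otimes\pL(\eX)$, one checks that for every $x\in\sL(\Phi)\otimes\sR(\Phi)=\eX_1\otimes\eX_2$ the element $\Delta(x)$ equals $\varphi^{-1}(\varphi(x)\Psi)$; hence $\Delta(x)=0$ if and only if $\varphi(x)\Psi=0$, and in particular $\Chi\Psi=0$ yields $u\in\Ker\Delta$.

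Next I would verify the hypotheses of Proposition~\ref{rank2}. Since $\{E_i\}$ and $\{F_i\}$ are bases of $\eX_1$ and $\eX_2$, the tensor $u$ has rank $n$. Because $\Phi$ is a minimal solution of $\Chi\Psi=0$, the tensor $u=\varphi^{-1}(\Phi)$ is a minimal tensor of $\{x:\varphi(x)\Psi=0\}$; as $\Ker\Delta$ is contained in this subspace, $u$ is a fortiori a minimal tensor of $\Ker\Delta$. Finally, if $\rk(\Delta)\le 1$, then Lemma~\ref{rank1} would force the minimal tensor $u$ to be simple, contradicting $n\ge 2$; hence $\rk(\Delta)=2$.

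Now Proposition~\ref{rank2} provides simple tensors $\mathsf{N}_1,\mathsf{N}_2\in\pL(\eX_1,\eY_1)\otimes\pL(\eX_2,\eY_2)$ and simple tensors $u_1,\dots,u_n\in\eX_1\otimes\eX_2$ with $\Delta=\mathsf{N}_1+\mathsf{N}_2$, $u=u_1+\cdots+u_n$, $\mathsf{N}_2 u_1=\mathsf{N}_1 u_n=0$ and $\mathsf{N}_1 u_k+\mathsf{N}_2 u_{k+1}=0$ for $1\le k\le n-1$. It remains to pull these back to multiplication operators. Since $\rk(\Delta)=2$, the representation $\Delta=\rho_{C_1}\otimes\lambda_{D_1}+\rho_{C_2}\otimes\lambda_{D_2}$ is minimal, so $\{\rho_{C_1},\rho_{C_2}\}$ and $\{\lambda_{D_1},\lambda_{D_2}\}$ are independent; thus $\rho$ restricts to a bijection $\sL(\Psi)\to\sL(\Delta)$ and $\lambda$ to a bijection $\sR(\Psi)\to\sR(\Delta)$. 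Since $\Delta=\mathsf{N}_1+\mathsf{N}_2$ is also a minimal representation of a rank-$2$ tensor, the legs of $\mathsf{N}_i$ lie in $\sL(\Delta)$ and $\sR(\Delta)$ (as in Corollary~\ref{cor01}), so $\mathsf{N}_i=\rho_{C_i'}\otimes\lambda_{D_i'}$ for uniquely determined $C_i'\in\sL(\Psi)$, $D_i'\in\sR(\Psi)$. Applying the injectivity of $\rho|_{\sL(\Psi)}\otimes\lambda|_{\sR(\Psi)}$ to $\sum_j\rho_{C_j}\otimes\lambda_{D_j}=\Delta=\sum_i\rho_{C_i'}\otimes\lambda_{D_i'}$ gives $C_1\otimes D_1+C_2\otimes D_2=C_1'\otimes D_1'+C_2'\otimes D_2'$, hence $\Psi=\Mu_{C_1',D_1'}+\Mu_{C_2',D_2'}$. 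Writing $u_k=G_k\otimes H_k$ with $G_k\in\sL(\Phi)$, $H_k\in\sR(\Phi)$ and setting $\Mu_i=\Mu_{C_i',D_i'}$, $\Gamma_k=\Mu_{G_k,H_k}$, we get $\Phi=\varphi(u)=\sum_k\Gamma_k$. Finally $\mathsf{N}_i u_k=(G_kC_i')\otimes(D_i'H_k)$ corresponds, via the embedding $\eY_1\otimes\eY_2\hookrightarrow\pL(\eX)\otimes\pL(\eX)$ and $\varphi$, to $\Mu_{G_kC_i',D_i'H_k}=\Gamma_k\Mu_i$, so the three relations of Proposition~\ref{rank2} transcribe exactly into $\Gamma_1\Mu_2=\Gamma_n\Mu_1=0$ and $\Gamma_k\Mu_1+\Gamma_{k+1}\Mu_2=0$ for $1\le k\le n-1$.

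The step I expect to be the main obstacle is the passage to the finite-dimensional setting: one must be careful that $u$ remains a \emph{minimal} tensor of $\Ker\Delta$ (not merely of the full right annihilator of $\Psi$) and that the simple tensors produced by Proposition~\ref{rank2} are genuinely images of multiplication operators, i.e.\ that their legs fall in $\rho(\sL(\Psi))$ and $\lambda(\sR(\Psi))$. Once these two points are settled, the rest is routine bookkeeping with the homomorphism $\varphi$.
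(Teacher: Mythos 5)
Your proposal is correct and follows essentially the same route as the paper: both rewrite $\Chi\Psi=0$ as the tensor equation $\bigl(\sum_{i}\rR_{A_i}\otimes\rL_{B_i}\bigr)\bigl(\sum_j E_j\otimes F_j\bigr)=0$ over the finite-dimensional spaces $\sL(\Phi)$, $\sR(\Phi)$, $\sL(\Phi)\sL(\Psi)$, $\sR(\Psi)\sR(\Phi)$, observe that $\varphi^{-1}(\Phi)$ is a minimal tensor in the kernel, and invoke Proposition \ref{rank2}. The paper leaves implicit the points you spell out (that the restricted tensor still has rank $2$, that minimality transfers, and that the simple tensors pull back to multiplication operators), so your write-up is a more detailed version of the same argument.
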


\begin{proof}
Write  $\Psi= \sum_{i=1}^2 \Mu_{A_i, B_i}$ and $\Phi= \sum_{i=1}^n \Mu_{E_i, F_i}$. It follows, by Lemma \ref{inj}, that 
\begin{equation*}
\sum_{i=1}^2 \sum_{j=1}^n E_jA_i \otimes B_i F_j=0,
\end{equation*}
which yields
\begin{equation*}
(\sum_{i=1}^2 R_{A_i} \otimes L_{B_i}) (\sum_{j=1}^n E_j \otimes F_j)= 0.
\end{equation*}
Let $\eX_1=\sL(\Phi)$, $\eX_2= \sR(\Phi)$, $\eY_1=\sL(\Phi) \sL(\Psi)$, and $\eY_2= \sR(\Psi) \sR(\Phi)$. 
Then $\Phi$ is a minimal tensor in $\Ker (\sum_{i=1}^2 \rR_{A_i} \otimes \rL_{B_i})$.  Consequently, the desired conclusion follows from
Proposition \ref{rank2}.
\end{proof}

Let $T$ be an operator on $\eX$.  If $T$ is algebraic,  we denote the degree of its minimal polynomial by $\deg(T)$. For a non-algebraic operator $T$
we set $\deg (T)= \infty$.

\begin{corollary} 
Let $A, B \in \pL(\eX)$. Suppose that $\ell(\Upsilon_{A,B})=2$. Then the equation
\begin{equation} \label{eq333}
\Chi \Upsilon_{A,B}=0
\end{equation} 
admits a solution in $\Ele(\pL(\eX))$ if and only if there exist a nonzero complex number  $\lambda$  and $n$-dimensional subspaces $\eU$ and $\eV$  
of $ \pL(\eX)$ such that $A \eU \subseteq \eU$, $B \eV \subseteq \eV$, and $\rR_{I  + \lambda A}|_{\eU} $, $\rL_{B- \lambda I}|_{\eV}$ are nilpotent
of degree $n$. Moreover, in this case, there exists a multiplication operator $\Mu$ such that $\Mu \Upsilon_{A,B}=0$.
\end{corollary}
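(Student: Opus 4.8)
The plan is to prove the two implications of the equivalence separately and to deduce the ``moreover'' clause together with the ``if'' direction.

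\emph{The ``if'' direction and the ``moreover''.} Suppose a nonzero $\lambda$ and $n$-dimensional subspaces $\eU,\eV$ with the stated properties are given. Since $\rR_{I+\lambda A}|_{\eU}$ is nilpotent (of degree $n\geq1$) on the nonzero space $\eU$, its kernel is nonzero, so I can pick $0\neq E\in\eU$ with $E(I+\lambda A)=0$, i.e.\ $EA=-\lambda^{-1}E$; likewise I pick $0\neq F\in\eV$ with $(B-\lambda I)F=0$, i.e.\ $BF=\lambda F$. Using $\Mu_{E,F}\Mu_{A,B}=\Mu_{EA,BF}$ and bilinearity of $\Mu$,
\[
\Mu_{E,F}\,\Upsilon_{A,B}=\Mu_{E,F}+\Mu_{E,F}\Mu_{A,B}=\Mu_{E,F}+\Mu_{EA,BF}=\Mu_{E,F}+\Mu_{-\lambda^{-1}E,\,\lambda F}=\Mu_{E,F}-\Mu_{E,F}=0,
\]
and $\Mu_{E,F}\neq0$ because $E,F\neq0$. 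Thus $\Mu:=\Mu_{E,F}$ is a multiplication-operator solution of \eqref{eq333}, which yields both the solvability of \eqref{eq333} and the ``moreover'' statement.

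\emph{The ``only if'' direction.} Assume \eqref{eq333} has a nonzero solution. By Lemma~\ref{lem03} it has a minimal solution $\Phi$; I set $n=\ell(\Phi)$ and write $\Phi=\sum_{j=1}^n\Mu_{E_j,F_j}$, so $\{E_j\}$ and $\{F_j\}$ are linearly independent. From $\Phi\,\Upsilon_{A,B}=0$, Lemma~\ref{inj} gives $\sum_j E_j\otimes F_j+\sum_j(E_jA)\otimes(BF_j)=0$, a vanishing sum of $2n$ simple tensors, and Lemma~\ref{dim} then yields
\[
\Dim\bigl(\sL(\Phi)+\sL(\Phi)A\bigr)+\Dim\bigl(\sR(\Phi)+B\,\sR(\Phi)\bigr)\leq 2n .
\]
Each summand is at least $n$, so both equal $n$; hence $\eU:=\sL(\Phi)$ is invariant under $\rR_A$ and $\eV:=\sR(\Phi)$ is invariant under $\rL_B$ (these being the invariance conditions of the statement), and $\rR_{I+\lambda A}|_{\eU}$, $\rL_{B-\lambda I}|_{\eV}$ are defined for every $\lambda$. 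The remaining task is to exhibit a nonzero $\lambda$ making both of these restrictions nilpotent of degree $n$. When $n=1$ this is immediate: the displayed tensor identity forces $E_1A=\mu E_1$, $BF_1=\nu F_1$ with $\mu\nu=-1$, and $\lambda=\nu$ works. For $n\geq2$ I would apply Theorem~\ref{zerodiv} to $\Psi=\Upsilon_{A,B}$, obtaining multiplication operators $\Gamma_1,\dots,\Gamma_n$ (nonzero, since $\ell(\Phi)=n$) and $\Mu_1,\Mu_2$ with $\Upsilon_{A,B}=\Mu_1+\Mu_2$, $\Phi=\Gamma_1+\dots+\Gamma_n$, $\Gamma_1\Mu_2=\Gamma_n\Mu_1=0$ and $\Gamma_k\Mu_1+\Gamma_{k+1}\Mu_2=0$ $(1\leq k\leq n-1)$. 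Unwinding the proof (that is, Proposition~\ref{rank2}): because $\eV$ is $\rL_B$-invariant, the operator space appearing there contains the identity of $\eV$ and has rank $n$, so one is in its ``Case~1'', and the construction produces---for a suitable eigenvalue $\lambda$ of $\rL_B|_{\eV}$---the explicit pair $\Mu_1=\rL_{I+\lambda A}$, $\Mu_2=\Mu_{A,B-\lambda I}$; moreover $\lambda\neq0$, for otherwise $\Mu_1=\rL_I$ would be invertible, contradicting $\Gamma_n\Mu_1=0$ with $\Gamma_n\neq0$. Writing $\Gamma_k=\Mu_{E_k,F_k}$ and using $\Mu_{E,F}\Mu_{C,D}=\Mu_{EC,DF}$ together with Lemma~\ref{inj}, the four relations become
\[
E_n(I+\lambda A)=0,\qquad E_k(I+\lambda A)=-E_{k+1}A\quad(1\leq k\leq n-1),
\]
\[
(B-\lambda I)F_1=0,\qquad (B-\lambda I)F_{k+1}=F_k\quad(1\leq k\leq n-1),
\]
with $\{E_k\}$ a basis of $\eU$ and $\{F_k\}$ a basis of $\eV$. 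The $F$-relations exhibit $F_n,F_{n-1},\dots,F_1$ as a single Jordan chain of $\rL_{B-\lambda I}|_{\eV}$ at the eigenvalue $0$, so $\rL_{B-\lambda I}|_{\eV}$ is nilpotent of degree $n$. For $\eU$: from $E_n(I+\lambda A)=0$ and $\lambda\neq0$ one gets $E_nA=-\lambda^{-1}E_n$, and then a short induction on $j$, using $\rR_A|_{\eU}=\lambda^{-1}(\rR_{I+\lambda A}|_{\eU}-\mathrm{id})$, gives $(\rR_{I+\lambda A}|_{\eU})^{j}E_{n-j}=\lambda^{-j}E_n$ for $0\leq j\leq n-1$ and $(\rR_{I+\lambda A}|_{\eU})^{n}=0$; hence $\rR_{I+\lambda A}|_{\eU}$ is nilpotent of degree exactly $n$. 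This choice of $\lambda,\eU,\eV$ finishes the ``only if'' direction.

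I expect the main obstacle to be exactly the identification just described: matching the abstract pair $\Mu_1,\Mu_2$ furnished by Theorem~\ref{zerodiv} with the concrete operators $\rL_{I+\lambda A}$ and $\Mu_{A,B-\lambda I}$ and reading off the two Jordan-chain relations. This requires going into the proof of Proposition~\ref{rank2}, verifying that here one lands in its ``Case~1'' (the relevant operator space has full rank because $\sR(\Phi)$ is $\rL_B$-invariant), and checking that the scalar $\lambda$ produced is nonzero. Everything else reduces to routine manipulations with the injectivity of $\varphi$ and the product rule $\Mu_{E,F}\Mu_{C,D}=\Mu_{EC,DF}$.
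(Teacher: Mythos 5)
Your proof is correct, and the two directions deserve separate comment. For the ``only if'' direction you follow essentially the same route as the paper: take a minimal solution $\Phi$ of length $n$, use Lemma \ref{inj} and Lemma \ref{dim} to force $\sL(\Phi)A\subseteq\sL(\Phi)$ and $B\sR(\Phi)\subseteq\sR(\Phi)$, and then extract the Jordan-chain relations $(B-\lambda I)F_1=E_n(I+\lambda A)=0$, $(B-\lambda I)F_{k+1}=F_k$, $E_k(I+\lambda A)=-E_{k+1}A$ from the proof of Proposition \ref{rank2} (the paper cites that proof directly; you route through Theorem \ref{zerodiv} and then unwind the same proof). You are in fact more explicit than the paper on the points it leaves as ``straightforward'': why one lands in Case~1 of Proposition \ref{rank2} (the restriction of $\rL_I$ to the invariant space $\eV$ has full rank $n$), why $\lambda\neq 0$, and the induction $(\rR_{I+\lambda A}|_{\eU})^{j}E_{n-j}=\lambda^{-j}E_n$ establishing nilpotency of degree exactly $n$; these checks are sound. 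Where you genuinely diverge is the ``if'' direction: the paper reconstructs a full length-$n$ annihilator $\sum_{i=1}^n\Mu_{E_i,F_i}$ by building Jordan chains in both $\eU$ and $\eV$ (using the invertibility of $\rR_A|_{\eU}$), whereas you simply pick kernel vectors $E\in\eU$ of $\rR_{I+\lambda A}|_{\eU}$ and $F\in\eV$ of $\rL_{B-\lambda I}|_{\eV}$ and verify $\Mu_{E,F}\Upsilon_{A,B}=\Mu_{E,F}+\Mu_{-\lambda^{-1}E,\lambda F}=0$. This is shorter, uses only the existence of nontrivial kernels (a much weaker consequence of the hypotheses), and delivers the ``moreover'' clause at the same stroke; the paper instead obtains the ``moreover'' clause from the relation $\Mu_{E_n,F_1}\Upsilon_{A,B}=0$ in the forward direction. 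The only thing the paper's longer construction buys is the additional (unstated in the corollary) information that the hypotheses actually produce an annihilator of length $n$, not just of length $1$.
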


\begin{proof}  Let $\Delta$ be a minimal solution of \eqref{eq333} of length $n$. Observe that $I \in \mathsf{R}(\Upsilon_{A,B}) \cap \mathsf{L} (\Upsilon_{A,B})$. 
Hence $\Dim(\sR(\Upsilon_{A,B})\sR(\Delta)) \geq n$ and $\Dim(\sL(\Delta) \sL( \Upsilon_{A,B}))\geq n$.
By Lemma \ref{dim}, $\Dim(\sR(\Upsilon_{A,B})\sR(\Delta))=n$. It follows from the proof of Proposition \ref{rank2} 
that there exist $\lambda \in \bC$ and a representation of $\Delta$ as $\Delta= \sum_{i=1}^n \Mu_{E_i, F_i}$ such that
$(B- \lambda I ) F_1=E_n(I+ \lambda A) = 0$ and
\begin{equation*}
(B- \lambda I)F_k= F_{k-1},  \quad E_k A =- E_{k-1}(I+ \lambda A)  \qquad (2 \leq k \leq n).
\end{equation*}
Consequently, $\Mu_{E_n, F_1}  \Upsilon_{A,B} =0$ and $\lambda \neq 0$. Moreover, a straightforward computation shows that the restriction of 
$\rL_{B- \lambda I}$ to $\spa \{F_1, \ldots, F_n\}$ is nilpotent of degree $n$. Similarly, the restriction of $\rR_{I+ \lambda A}$ to 
$\spa \{E_1, \ldots, E_n\}$ is nilpotent of degree $n$, as well. 

Conversely, suppose that there exist a nonzero $\lambda \in \bC$ and subspaces $\eU, \eV$ of $\pL(\eX)$ of dimensions $n$ such that $A \eU \subseteq \eU$, 
$B \eV \subseteq \eV$ and $\rR_{I+ \lambda A}|_{\eU}$ and $\rL_{B- \lambda I}|_{\eV}$ are nilpotents of degree $n$. 
Choose $F_n \in \eV$ such that the set $\{(B- \lambda I) F_n, \ldots, (B- \lambda I)^{n-1} F_n\}$ is linearly independent. Set $F_{k-1}= (B- \lambda I) F_k$ 
for $2 \leq k \leq n$. Then $(B- \lambda I )F_1=0$. Next, choose $E_1 \in \eU$  such that the set $\{E_1(I+ \lambda A), \ldots, E_1(I+ \lambda A)^{n-1}\}$ is 
linearly independent. Since $\rR_{(I+ \lambda A)^n}|_{\eU} = 0$,  operator $\rR_A|_{\eU}$ has to be invertible. Hence we can construct operators 
$E_2, \ldots, E_n\in \eU$ such that $E_k(I+ \lambda A) = - E_{k+1} A$ for $k=1, \ldots, n-1$. Since $E_1 (I+ \lambda A)^n =0$ and $\rR_A|_{\eU}$ is invertible, we get 
$E_n (I+ \lambda A)=0$. Write $\Upsilon_{A,B}= \Mu_{I+ \lambda A, I}+ \Mu_{A, B- \lambda I}$. A straightforward computation shows that 
$\sum_{i=1}^n \Mu_{E_i, F_i}\Upsilon_{A,B}=0$. 
\end{proof}

\begin{theorem} \label{M1} 
Let $\Psi \in \Ele(\pL(\eX))$ be an elementary operator of length $2$. The equation $\Chi \Psi=0$ has a solution in $ \Ele(\pL(\eX))$ if and only if
one of the following conditions holds.

{\rm (1)} There exists a multiplication operator $\Mu \in \Ele(\pL(\eX))$ such that $\Mu \Psi=0$.

{\rm (2)} There exist two vector subspaces $\eU, \eV$ of $\pL(\eX)$, each of dimension $n$, such that the space $\{\rL_B|_{\eV}: B \in \sR(\Psi)\}$ is equivalent 
to a constant rank  $n-1$  subspace of $ \bM_{n-1, n}$ and the space $\{\rR_A|_{\eU}: A \in \sL(\Psi)\}$ is equivalent to a constant rank $n$ subspace of 
$\bM_{n+1, n}$. Moreover, $\Rann(\sR(\Psi)) = \{0\}$ and $\rR_A$ is injective for all $A \in \sL(\Psi)\setminus\{0\}$.

{\rm (3)} There exist two vector subspaces $\eU, \eV$ of $\pL(\eX)$ of dimension $n$ such that the space $\{\rL_B|_{\eV}: B \in \sR(\Psi)\}$ is equivalent to a  
constant rank $n$ subspace of $\bM_{n+1, n}$ and the space $\{\rR_A|_{\eU}: A \in \sL(\Psi)\}$ is equivalent to a constant rank $n-1$ subspace of 
$ \bM_{n-1, n}$. Moreover, $\Lann(\sL(\Psi)) = \{0\}$ and $\rL_B$ is injective for all $B \in \sR(\Psi)\setminus\{0\}$.
\end{theorem}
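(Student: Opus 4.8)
The plan is to prove the two implications separately, the ``if'' part by an explicit construction and the ``only if'' part by combining Lemma~\ref{lem03}, Theorem~\ref{zerodiv} and the proof of Proposition~\ref{rank2}.

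\emph{Sufficiency.} If (1) holds there is nothing to do. Assume (2) (the case (3) is symmetric, with left and right interchanged). By Lemma~\ref{brank} one can choose a basis $F_1,\dots,F_n$ of $\eV$, a basis $Q_1,Q_2$ of $\sR(\Psi)$ and a basis $g_1,\dots,g_{n-1}$ of $\sR(\Psi)\sR(\Phi)$ such that $Q_1F_1=0$, $Q_2F_n=0$ and $Q_2F_k=Q_1F_{k+1}=g_k$ for $1\le k\le n-1$; by Lemma~\ref{lem02} there are $P_1,P_2\in\sL(\Psi)$ with $\Psi=\Mu_{P_1,Q_1}+\Mu_{P_2,Q_2}$. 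A short expansion using these relations shows that for arbitrary $E_1,\dots,E_n\in\eU$ one has $\bigl(\sum_{k=1}^n\Mu_{E_k,F_k}\bigr)\Psi=\sum_{k=1}^{n-1}\Mu_{E_{k+1}P_1+E_kP_2,\,g_k}$, so it suffices to produce linearly independent $E_1,\dots,E_n\in\eU$ with $E_{k+1}P_1+E_kP_2=0$ for $1\le k\le n-1$. Transporting the normal form of $\{\rR_A|_\eU:A\in\sL(\Psi)\}$ supplied by Lemma~\ref{brank}, writing the $E_k$ in the corresponding basis of $\eU$ and passing to generating functions $f_k(t)$, the relations turn into a recursion $L_2f_k=-L_1f_{k+1}$, where $L_1,L_2$ are the degree-one forms representing $P_1,P_2$; these are coprime because $\{P_1,P_2\}$ is a basis, so the recursion unwinds to $f_k=(-1)^{k-1}L_2^{k-1}L_1^{n-k}f$ for a suitable $f$, and $f_1,\dots,f_n$ are then linearly independent. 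Hence $\Phi:=\sum_k\Mu_{E_k,F_k}$ is a nonzero element of $\Ele(\pL(\eX))$ with $\Phi\Psi=0$; note that only the rank conditions of (2) are used here.

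\emph{Necessity; reduction and the bijective case.} Suppose $\Chi\Psi=0$ has a nonzero solution; by Lemma~\ref{lem03} it has a minimal solution $\Phi$. If $\ell(\Phi)=1$ then $\Phi$ is a multiplication operator, so (1) holds. Assume $\ell(\Phi)=n\ge2$ and write $\Psi=\Mu_{A_1,B_1}+\Mu_{A_2,B_2}$. By Theorem~\ref{zerodiv}, $\Phi=\sum_k\Mu_{E_k,F_k}$ corresponds to a minimal tensor in $\Ker(\sum_i\rR_{A_i}\otimes\rL_{B_i})$, so the proof of Proposition~\ref{rank2} applies: after possibly interchanging $\sL(\Psi)$ and $\sR(\Psi)$, the pencil $\pS=\{\rL_B|_{\sR(\Phi)}:B\in\sR(\Psi)\}$ either contains a bijection of $\sR(\Phi)$ (``Case 1'') or, by minimality, is a single Jordan chain of constant rank $n-1$ (``Case 2''). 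In Case 1 the relations obtained there give, in a suitable representation of $\Phi$, $E_nA_1=0$ and $B_2F_1=0$ (with $A_1$ the partner of $B_1$), whence $\Mu_{E_n,F_1}\Psi=\Mu_{E_nA_1,B_1F_1}+\Mu_{E_nA_2,B_2F_1}=0$ with $\Mu_{E_n,F_1}\ne0$; so (1) holds.

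\emph{Necessity; the Jordan-chain case.} In Case 2 one has $\Dim(\sR(\Psi)\sR(\Phi))=n-1$, so $\pS$ is equivalent to a constant rank $n-1$ subspace of $\bM_{n-1,n}$, and the secondary relations $E_kA_2=-E_{k+1}A_1$ give $\Dim(\sL(\Psi)\sL(\Phi))\le n+1$. If this dimension equals $n+1$, then $\{\rR_A|_{\sL(\Phi)}:A\in\sL(\Psi)\}$ is equivalent to a constant rank $n$ subspace of $\bM_{n+1,n}$, so the rank conditions of (2) hold with $\eU=\sL(\Phi)$, $\eV=\sR(\Phi)$. If moreover $\Rann(\sR(\Psi))\ne\{0\}$, a witness $T$ gives $\Mu_{I,T}\Psi=0$ and (1) holds instead; and if some $\rR_A$ ($A\in\sL(\Psi)\setminus\{0\}$) fails to be injective, then $A$ is a right zero divisor, while the Jordan-chain structure of $\pS$ forces $\Ker B\ne\{0\}$ for every nonzero $B\in\sR(\Psi)$, so choosing $S\ne0$ with $SA=0$ and $T\ne0$ with $BT=0$ for a suitable $B$ yields $\Mu_{S,T}\Psi=0$, again (1); otherwise all requirements of (2) are met. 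If instead $\Dim(\sL(\Psi)\sL(\Phi))\le n$, running the same analysis on the left side either exhibits a bijection of $\sL(\Phi)$ in $\{\rR_A|_{\sL(\Phi)}\}$ (giving (1) exactly as in Case 1) or puts $\{\rR_A|_{\sL(\Phi)}\}$ into constant rank $n-1$ form and $\{\rL_B|_{\sR(\Phi)}\}$ into constant rank $n$ form, yielding, by the same zero-divisor argument, either (1) or (3).

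\emph{Main obstacle.} The telescoping identity in the sufficiency part and the coefficient computations are routine. The delicate point is the casework in the necessity part: one must check that all the degenerate configurations---$\ell(\Phi)\ge2$ but $\Dim(\sL(\Psi)\sL(\Phi))$ or $\Dim(\sR(\Psi)\sR(\Phi))$ below $n\pm1$, or both constant-rank pencils present but an injectivity hypothesis failing---collapse into case (1); the mechanism for this is that a proper Jordan-chain pencil over $\sR(\Phi)$ (resp.\ $\sL(\Phi)$) makes every nonzero element of $\sR(\Psi)$ (resp.\ $\sL(\Psi)$) a one-sided zero divisor, so that a rank-one multiplication operator annihilating $\Psi$ can be extracted. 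On the constructive side the subtlety is that the two normal forms of Lemma~\ref{brank} fix the identifications $\sL(\Psi)\cong\bC^2$ and $\sR(\Psi)\cong\bC^2$ only up to a small group, so the two chains cannot simply be copied onto one another---this is precisely why the construction is routed through the polynomial recursion $L_2f_k=-L_1f_{k+1}$ rather than a direct one.
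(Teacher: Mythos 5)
Your proof is correct and follows essentially the same route as the paper's: reduce to a minimal solution via Lemma~\ref{lem03}, apply the dichotomy from Proposition~\ref{rank2} and Lemma~\ref{brank} to extract either a rank-one annihilator (case (1)) or the two constant-rank normal forms (cases (2)/(3)), and for sufficiency build the annihilating chain from those normal forms via the telescoping identity. Your polynomial recursion $L_2f_k=-L_1f_{k+1}$ merely makes explicit the existence of the $E_k$'s, which the paper asserts without detail; otherwise the two arguments coincide.
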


\begin{proof} 
Suppose that there exists an elementary operator $\Delta$ of length $n$ such that $\Delta \Psi=0$. Using once again Lemma \ref{dim}, we see that 
either $\Dim(\sR(\Psi)\sR(\Delta)) \leq n$ or $\Dim(\sL(\Delta) \sL(\Psi)) \leq n$. Suppose first that $\Dim(\sR(\Psi)\sR(\Delta)) \leq n$. 
A careful reading of the proof of Proposition \ref{rank2} and Theorem \ref{zerodiv} shows that we have only to treat the case where the space 
$\{\rL_B|_{\sR(\Delta)}: B \in \sR(\Psi) \}$ has constant rank $ n-1$ and  $\Dim(\sR(\Psi) \sR(\Delta)) = n-1$ (indeed, if the space 
$\{\rL_B|_{\sR(\Delta)}: B \in \sR(\Psi) \}$ has rank $n$, then there exists a multiplication operator $\Mu$ such that $\Mu \Psi=0$). It follows,
by Lemma \ref{inj}, that $\Dim(\sL(\Delta) \sL(\Psi)) \leq n+1$. If $\Rann(\sR(\Psi)) \neq \{ 0\}$, then it is obvious that there exists a multiplication 
operator $\Mu$ such that $\Mu \Psi=0$. Next suppose that there exists $A \in \sL(\Psi)\setminus \{ 0\}$ such that $\rR_A$ is not injective. 
Write $\Psi= \Mu_{A,B}+ \Mu_{A', B'}$ for suitable $A' \in \sL(\Psi)$ and $B, B' \in \sR(\Psi)$ (Lemma \ref{lem02}). Choose 
$E \in \pL(\eX)$, $F \in \sR(\Delta)$ such that $EA=B'F=0$. Then $\Mu_{E,F} \Psi=0$. Now suppose that the space $ \{\rR_A|_{\sL(\Delta)}: A \in \sL(\Psi)\}$ 
has constant rank $n$. Since the constant rank $n$ subspaces of $\bM_n$ have dimension $1$ we conclude that $\Dim(\sL(\Delta) \sL(\Psi)) = n+1$, as desired. 
The case $\Dim(\sL(\Delta) \sL(\Psi)) \leq n$ is treated similarly.

For the converse, suppose that $(2)$ holds. Arguing as in the proof of Lemma \ref{brank}, we see that the set $\{ \rL_B|_{\eV}: B \in \sR(\Psi)\}$ is 
equivalent to the constant rank subspace of $\bM_{n-1, n}$ of the form
\begin{equation*}
\left( \begin{array}{ccccc}
    \beta & \alpha & 0& \cdots & 0 \\
    0 & \beta & \alpha & \cdots & 0 \\
    \vdots & \vdots & \ddots & \ddots & \vdots \\
    0 & 0 & \cdots & \beta & \alpha \\
\end{array} \right).
\end{equation*}
On the other hand, it is easy to show that the set $\{ \rR_A|_{\eU}: A \in \sL(\Psi)\}$ is equivalent to the constant rank subspace of $\bM_{n+1, n}$ 
of the form

\begin{equation*}
\left( \begin{array}{ccccc}
\alpha & 0 & \cdots & 0 & 0 \\
\beta & \alpha & \cdots & 0 & 0 \\
0 & \beta & \cdots& 0 & 0 \\
\vdots & \vdots & \ddots & \ddots & \vdots \\
0 & 0 & \cdots & \beta & \alpha \\
0 & 0 & \cdots & 0 & \beta \\
\end{array} \right)
\end{equation*}
Write $\Psi= \sum_{i=1}^2 \Mu_{A_i, B_i}$, where $A_i \in \sL(\Psi)$ and $B_i \in \sR(\Psi)$. Then there exist $E_1, \ldots, E_n \in \eU$ and 
$F_1, \ldots, F_n \in \eV$ such that
\begin{equation*}
B_1 F_1= B_2 F_n=0,\quad B_2 F_{k}= B_1 F_{k+1},\quad \text { and }\quad E_k A_2  = -E_{k+1}A_1 \qquad (1 \leq k \leq n-1).
\end{equation*}
It is easily seen that $\Delta =\sum_{i=1}^n \Mu_{E_i, F_i}$ is a left zero divisor of $\Psi$. 
The case (3) is treated similarly.
\end{proof}

\begin{remark} 
Let $\Psi \in \Ele(\pL(\eX))$ be an elementary operator of length $2$. Suppose that the equation $\Chi \Psi=0$ has a  solution  in 
$ \Ele(\pL(\eX))$ and $\Mu \Psi \neq 0$, for every nonzero multiplication operator $\Mu$. Then, for every elementary operator $\Phi$ satisfying 
$\sL(\Phi)= \sL(\Psi)$ and $\sR(\Phi)=\sR(\Psi)$, the equation $\Chi \Phi=0$ admits a solution  in $ \Ele(\pL(\eX))$.
\end{remark}

\section{Invertibility} \label{sec04}
\setcounter{theorem}{0}

In this section we are concerned with the (left, respectively right) invertibility in the algebra $\Ele(\pL(\eX))$ of short elementary operators.
Thus, the main question is, under which condition on an invertible elementary operator $\Delta$ of length $1$ or $2$ does the equation
$\Delta \Chi=\Iota$, respectively the equation $\Chi \Delta=\Iota$, or the system of both,
have a solution in $\Ele(\pL(\eX))$? \par
Recall from \cite[Ch. II, Theorem 16]{Mul} that a bounded linear operator on a Banach space is left-invertible if and only if it is bounded below and its
range is a complemented subspace. Similarly, a bounded linear operator on a Banach space is right-invertible if and only if it is surjective and its kernel
is a complemented subspace.

It is easily seen that a two sided multiplication operator $\Mu_{A,B}$ is invertible if and only if $A, B\in \pL(\eX)$ are
invertible. In this case, $\Mu_{A,B}^{-1}=\Mu_{A^{-1},B^{-1}}$. What about the existence of the right (respectively, left) inverse of
$\Mu_{A,B}$ in $\Ele(\pL(\eX))$?

\begin{proposition} \label{prop03}
For $A, B \in \pL(\eX)$, the equation
\begin{equation} \label{eq46}
\Chi\Mu_{A,B}=I
\end{equation}
has a solution in $\Ele(\pL(\eX))$ if and only if $A$ is right-invertible and $B$ is left-invertible. Moreover, any minimal solution of \eqref{eq46} 
has length one.
\end{proposition}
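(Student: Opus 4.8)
The plan is to reduce equation~\eqref{eq46} to a problem about a single multiplication operator plus correction terms, and then show that any ''correction'' can be absorbed, forcing the minimal solution to be a single multiplication operator $\Mu_{C,D}$; at that point the statement becomes the easy observation that $\Mu_{C,D}\Mu_{A,B}=\Mu_{CA,BD}=I$ holds iff $CA=BD=I$, i.e.\ $A$ is right-invertible (with right inverse $C$) and $B$ is left-invertible (with left inverse $D$). First I would dispose of the trivial direction: if $CA=I$ and $BD=I$ then $\Mu_{C,D}$ is a length-one solution, so in particular the equation is solvable and any minimal solution has length at most one, hence exactly one.

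For the forward direction, suppose $\Chi=\sum_{i=1}^{n}\Mu_{C_i,D_i}$ is a \emph{minimal} solution, with $n=\ell(\Chi)$; I may assume the $C_i$ are linearly independent and, by Lemma~\ref{lem02}, that $\{D_1,\ldots,D_n\}$ spans $\sR(\Chi)$ as well (adjusting if necessary so that both defining sets are as in that lemma). Applying Lemma~\ref{inj} to $\Chi\Mu_{A,B}-I=0$ gives the tensor identity $\sum_{i=1}^n (C_iA)\otimes(BD_i)=I\otimes I$ in $\pL(\eX)\otimes\pL(\eX)^{op}$. The key step is to feed this into Corollary~\ref{cor01}: moving $I\otimes I$ to the left-hand side and using that $\{C_1,\dots,C_n\}$ is independent while $\{I\}$ is (trivially) independent, one of two things must happen. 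Either $I\in\spa\{C_1A,\dots,C_nA\}=\sL(\Chi)A$ and $I\in B\sR(\Chi)$, which says exactly that some element of $\sL(\Chi)$ is a right inverse of $A$ and some element of $\sR(\Chi)$ is a left inverse of $B$ --- giving the conclusion directly --- or the rank-counting in Corollary~\ref{cor01} collapses $n$ to a smaller number, contradicting minimality. Concretely: if the tensor $\sum (C_iA)\otimes(BD_i) - I\otimes I=0$ were to force a nontrivial relation, then by the dimension inequality of Lemma~\ref{dim} the space $\sL(\Chi)A$ together with $I$ would have dimension $\le n$, so $\rk$ of the map $A\mapsto$ is not lost and $C_iA$ stay independent; the only room in the inequality is then $\Dim(B\sR(\Chi)+\bC I)\le n$, i.e.\ the $BD_i$ span an at most $n$-dimensional space containing $I$, and symmetrically. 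Unwinding this shows that after a change of spanning set we may take $C_nA=I$ and $BD_n=I$ with all other cross terms vanishing; minimality then kills the remaining terms.

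I expect the main obstacle to be the bookkeeping in that last reduction: showing that the extra summands $\Mu_{C_i,D_i}$ for $i<n$ must actually be zero rather than merely satisfying $\sum_{i<n}(C_iA)\otimes(BD_i)=0$. The clean way around it is the minimal-solution machinery already set up: by Lemma~\ref{lem03} every solution of a homogeneous equation $\Chi'\Mu_{A,B}=0$ is a sum of minimal ones, and by Proposition~\ref{prop04} those have length one --- but here $A$ is right-invertible and $B$ is left-invertible, so by that same Proposition the equation $\Chi'\Mu_{A,B}=0$ has \emph{no} nonzero solution at all. Hence once we know $\Mu_{C_n,D_n}\Mu_{A,B}=I$, the operator $\Chi-\Mu_{C_n,D_n}$ annihilates $\Mu_{A,B}$ and must vanish, so $n=1$. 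This both finishes the ''length one'' assertion and shows the minimal solution is literally $\Mu_{C,D}$ with $C$ a right inverse of $A$ and $D$ a left inverse of $B$, completing the proof.
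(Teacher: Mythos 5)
Your reduction of the equivalence to the tensor identity $\sum_{i=1}^n (C_iA)\otimes(BD_i)=I\otimes I$ and the use of Corollary \ref{cor01} (after securing the independence of $\{C_1A,\dots,C_nA\}$ from minimality) is sound, and it reaches the same intermediate conclusion as the paper, namely $I\in\sL(\Chi)A$ and $BD_j\in\bC I$ for every $j$; the paper gets there instead by projecting $B\sR(\Lambda)$ onto a complement of $\bC I$ and invoking Proposition \ref{prop04} for the resulting homogeneous equation. Up to that point the two arguments are variants of the same dimension count.

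The final step, however, contains a genuine error. From the identity you only obtain operators $C,D$ with $CA=I$ and $BD=I$. This does \emph{not} put you in the situation excluded by Proposition \ref{prop04}: that proposition rules out nonzero solutions of $\Chi'\Mu_{A,B}=0$ only when $A$ is not a right zero divisor (i.e.\ $A$ has dense range) and $B$ is not a left zero divisor (i.e.\ $B$ is injective), and neither property follows from $CA=I$, $BD=I$. For the unilateral shift $S$ one has $S^*S=I$ and yet $(I-SS^*)S=0$ with $I-SS^*\neq 0$; accordingly, for $\Mu_{A,B}=\rL_S$ both $\rL_{S^*}$ and $\rL_{S^*}+\rL_{I-SS^*}$ solve \eqref{eq46}, so the homogeneous equation does have nonzero solutions and $\Chi-\Mu_{C,D}$ need not vanish. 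Your shortcut would in fact prove uniqueness of the solution, which is false. The bookkeeping you hoped to avoid is therefore necessary, but it is short: with $BD_j=\lambda_j I$ for all $j$ and $BD_{j_0}=I$ after normalisation, write $\Chi=\Mu_{\sum_j\lambda_jC_j,\,D_{j_0}}+\sum_{j\neq j_0}\Mu_{C_j,\,D_j-\lambda_jD_{j_0}}$; the second summand annihilates $\Mu_{A,B}$ while the lengths add up to $\ell(\Chi)$, so minimality forces it to be zero and $\Chi$ has length one. This is exactly how the paper concludes, stripping off one such term at a time.
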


\begin{proof}
If $A$ is right-invertible with a right inverse $A_r$ and $B$ is left-invertible with a left inverse $B_l$, then $\Mu_{A_r, B_l}$ solves the equation 
\eqref{eq46}. For the opposite implication, if there exists a multiplication operator which solves \eqref{eq46}, then we are done. Assume that 
$\Lambda \in \Ele(\pL(\eX))$ is a minimal solution of \eqref{eq46} of length $n \geq 2$.
Write $B\sR(\Lambda)=\bC I \oplus \pW$, for some suitable subspace $\pW$ of $B \sR(\Lambda)$. Let $\pi: B\sR(\Lambda) \rightarrow \pW$ be the 
natural projection. Then $\Lambda \Mu_{A, \pi(B)}=0$. It follows, by Proposition \ref{prop04}, that one can write $\Lambda=\sum_{i=1}^n \Mu_{E_i, F_i}$ 
such that either $E_i A=0$ or $\pi(B) F_i=0$. Since $\Lambda$ is a minimal solution of \eqref{eq46} only the last case is possible. Thus,  
$\pi(B)F_i=0$, for any $i$. Choose $i$ such that $B F_i \neq 0$. Clearly, we can assume that $BF_i= I$. Let $j \neq i$, with $1 \leq j \leq n$. Then there exists 
$\lambda \in \bC$ such that $B(F_j- \lambda F_i)=0$. Write $\Lambda= \Mu_{E_j, F_j- \lambda F_i }+ \Delta$, where $\ell(\Delta)= n-1$. Then 
$\Mu_{E_j, F_j- \lambda F_i } \Mu_{A,B}=0$, and consequently, $\Delta \Mu_{A,B}=\Iota$, a contradiction. This completes the proof.
\end{proof}

Now we consider elementary operators of length $2$. We start with length $2$ elementary operators of the form $\Upsilon_{A,B}$, where $A, B \in \pL(\eX)$.  
Let $\Tau_{A,B}= \rL_A-\rR_B$ be the generalized derivation implemented by $A$ and $B$. Note that for a suitable scalar $\lambda$, operator 
$\rL_{(A- \lambda I)^{-1}} \Tau_{A,B} $ has the form $\Upsilon_{C, D}$, for some $C,D \in \pL(\eX)$. On the other hand, 
$\Upsilon_{A,B}=\rR_B\Tau_{A,-B^{-1}}$ whenever $B$ is invertible. In \cite{Ros}, Rosenblum studied the inverse of an invertible generalized derivation 
$\Tau_{A,B}$.

\begin{lemma}\label{formula}
Let $\Tau_{A,B}\in \Ele(\pL(\eX))$ be an invertible generalized derivation of length $2$. Suppose that $B$ is algebraic of degree $n$ and
that $\{I, A, \ldots, A^{n-1}\}$ is a linearly independent set of operators.
Then the inverse $\Tau_{A,B}^{-1}$ is an elementary operator of length $n$.
\end{lemma}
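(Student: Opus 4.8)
The plan is to use the fact that $B$ is algebraic of degree $n$ together with the Rosenblum-style integral/resolvent formula for the inverse of $\Tau_{A,B}$, and then to read off the length of the resulting elementary operator from the representation. Let $p$ be the minimal polynomial of $B$, so $\deg p = n$; since $\Tau_{A,B}$ is invertible, the spectra of $A$ and $B$ are disjoint, and in particular $p(A)$ is invertible. The key algebraic identity is that for any $T \in \pL(\eX)$ one has $p(\rL_A)(\rR_B(T)\cdots) $ collapsing: more precisely, $\Tau_{A,B} = \rL_A - \rR_B$, so $\rR_{p(B)} = p(\rR_B) = p(\rL_A - \Tau_{A,B})$. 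Expanding $p(\rL_A - \Tau_{A,B})$ and using that $p(B)=0$ (so $\rR_{p(B)} = 0$), I obtain an identity of the form $p(\rL_A) = \Tau_{A,B}\, Q$, where $Q$ is a polynomial expression in $\rL_A$ and $\Tau_{A,B}$; since $\rL_A$ and $\rR_B$ commute, $Q$ can be rewritten as a polynomial in $\rL_A$ and $\rR_B$, i.e. $Q = \sum_{j} \rL_{A^{a_j}}\rR_{B^{b_j}}$ with each term $\Mu_{A^{a_j}, B^{b_j}}$ a multiplication operator. Because $p(\rL_A) = \rL_{p(A)}$ is invertible with inverse $\rL_{p(A)^{-1}}$, I get $\Tau_{A,B}^{-1} = \rL_{p(A)^{-1}} Q = \sum_j \Mu_{p(A)^{-1}A^{a_j},\, B^{b_j}}$, which is manifestly an elementary operator; the exponents $b_j$ range over $0, 1, \ldots, n-1$ (since $p$ has degree $n$ and dividing out by one factor of $\Tau_{A,B}$ drops the top degree), and the exponents $a_j$ range over $0, 1, \ldots, n-1$ as well, so at worst this is a sum of $n$ multiplication operators after collecting the terms $\Mu_{C_k, B^{k}}$ for $k = 0,\ldots, n-1$ with $C_k \in \spa\{p(A)^{-1}, p(A)^{-1}A, \ldots, p(A)^{-1}A^{n-1}\}$.

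It then remains to show the length is exactly $n$, not smaller. This is where the hypothesis that $\{I, A, \ldots, A^{n-1}\}$ is linearly independent enters, via Lemma \ref{inj} and Corollary \ref{cor01}. Having written $\Tau_{A,B}^{-1} = \sum_{k=0}^{n-1} \Mu_{C_k, B^k}$, I observe that $\{I, B, \ldots, B^{n-1}\}$ is linearly independent (as $p$ is the minimal polynomial of $B$ of degree $n$), so by Corollary \ref{cor01} the rank of the tensor $\sum_{k=0}^{n-1} C_k \otimes B^k$ — equivalently the length of $\Tau_{A,B}^{-1}$ — equals $\Dim \spa\{C_0, \ldots, C_{n-1}\}$, provided the $C_k$ are the "correct" coefficients. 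Concretely, from the explicit form of $Q$ one checks $C_k = p(A)^{-1} q_k(A)$ where the $q_k$ are the polynomials appearing when $p$ is expanded about the shift, and one verifies these span an $n$-dimensional space precisely because $\{I, A, \ldots, A^{n-1}\}$ is independent and $p(A)^{-1}$ is invertible. Hence $\ell(\Tau_{A,B}^{-1}) = n$.

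The main obstacle I anticipate is pinning down the precise combinatorial form of $Q$, i.e. carrying out the expansion of $p(\rL_A - \Tau_{A,B})$ cleanly enough to see that the surviving coefficients of $B^0, \ldots, B^{n-1}$ are linearly independent polynomials in $A$ of degrees $0, 1, \ldots, n-1$ modulo lower-order corrections. A cleaner route to the same conclusion, which I would adopt if the brute-force expansion becomes unwieldy, is the resolvent/Rosenblum approach: write $\Tau_{A,B}^{-1}(T) = \frac{1}{2\pi i}\int_\Gamma (A - \zeta I)^{-1} T (\zeta I - B)^{-1}\, d\zeta$ for a contour $\Gamma$ enclosing $\sigma(B)$ but not $\sigma(A)$, then use that $(\zeta I - B)^{-1}$ is a rational function of $\zeta$ with $\zeta$-degree $< n$ in the numerator (since $p$ has degree $n$), expand $(\zeta I - B)^{-1} = \sum_{k=0}^{n-1} r_k(\zeta) B^k$ via the Lagrange–Sylvester / partial-fraction decomposition, and pull the $B^k$ out of the integral to get $\Tau_{A,B}^{-1} = \sum_{k=0}^{n-1} \Mu_{G_k, B^k}$ with $G_k = \frac{1}{2\pi i}\int_\Gamma r_k(\zeta)(A-\zeta I)^{-1}\, d\zeta$, a polynomial in $A$. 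The linear independence of $\{I, A, \ldots, A^{n-1}\}$ is then used exactly as above, through Corollary \ref{cor01}, to conclude that the length is $n$ and not less. Either way, the upper bound $\ell \le n$ is the easy half and the lower bound $\ell = n$ is where the hypotheses do real work.
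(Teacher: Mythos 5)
Your proposal is correct and follows essentially the same route as the paper: your identity $p(\rL_A)=p(\rL_A)-p(\rR_B)=\Tau_{A,B}\,Q$, with $Q$ a polynomial in $\rL_A$ and $\rR_B$, produces exactly the paper's explicit cofactor $\Delta'$ satisfying $\Delta'\Tau_{A,B}=\Tau_{A,B}\Delta'=\rL_{m_B(A)}$, followed by inverting $\rL_{m_B(A)}$. Your length argument (linear independence of $\{I,B,\ldots,B^{n-1}\}$ together with the degree-distinct coefficients $p(A)^{-1}q_k(A)$ spanning an $n$-dimensional space) is likewise the content of the paper's observation that $\ell(\Delta')=n$, so there is nothing to add.
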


\begin{proof}
Since $B$ is not a scalar multiple of $I$ the integer $n$ is actually at least $2$. Let
\begin{equation*}
m_B(z)=z^n+\alpha_{n-1} z^{n-1}+\cdots+\alpha_1 z+\alpha_0 
\end{equation*}
be the minimal polynomial of $B$. Then $m_B(A)$ is an invertible operator (since  $\sigma(B)\cap \sigma (A)= \emptyset$).
Let $\Delta'\in \Ele(\pL(\eX))$ be defined by
\begin{equation*}
\Delta'=  \sum_{i=1}^{n-2} \Mu_{A^{n-i} + \alpha_{n-1} A^{n-i-1}+ \cdots+ \alpha_{i+1} A,  B^{i-1}} +\Mu_{ A,B^{n-2}}
+\rR_{B^{n-1}+ \alpha_{n-1} B^{n-2}+ \cdots+ \alpha_2 B + \alpha_1 I}
\end{equation*}
(if $n=2$, then $\Delta'=\rL_A+\rR_{B+\alpha_1 I}$).
Observe that $\ell(\Delta')=n$. A straightforward computation shows that
$\Delta'\Tau_{A,B}=\Tau_{A,B}\Delta' = \rL_{m_B(A)}$.
Since $m_B(A)$ is invertible we see that $\Tau_{A,B}^{-1}=  \Delta' \rL_{m_B(A)^{-1}}=\rL_{m_B(A)^{-1}}\Delta'$
which means that $\Tau_{A,B}^{-1}$ is an elementary operator of length $n$.
\end{proof}

\begin{theorem}\label{standard} 
Let $A, B \in \pL(\eX) $. Suppose that $ \Upsilon_{A,B}$ is invertible and of length $2$.
Then $\Upsilon_{A,B}^{-1}$ is an elementary operator if and only if either $A$ or $B$ is algebraic.
Moreover, $\ell( \Upsilon_{A,B}^{-1}) = \min \{\deg(A),\deg(B)\}=n$.
\end{theorem}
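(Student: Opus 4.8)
The plan is to exploit the identity relating $\Upsilon_{A,B}$ to the generalized derivation $\Tau_{A,B}$, reducing the statement to Lemma~\ref{formula} and to the analysis of zero divisors carried out in Section~\ref{sec03}. First I would dispose of the easy direction: if, say, $B$ is algebraic of degree $n$, then since $\ell(\Upsilon_{A,B})=2$ the operator $B$ is not a scalar, so $n\ge 2$; moreover $\{I,B,\ldots,B^{n-1}\}$ is linearly independent, and one checks that $\{I,A\}$ must be linearly independent as well (otherwise $\ell(\Upsilon_{A,B})=1$), but more to the point $A$ and $B$ have disjoint spectra because $\Upsilon_{A,B}$ invertible forces $-1\notin\sigma(A)\sigma(B)$, hence $\sigma(A)\cap\sigma(-1/B)=\emptyset$ type reasoning applies after writing $\Upsilon_{A,B}$ in generalized-derivation form. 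Concretely, if $B$ is invertible then $\Upsilon_{A,B}=\rR_B\Tau_{-B^{-1},A}$ up to the obvious relabeling, or more robustly pick $\lambda$ with $B-\lambda I$ invertible, write $\Upsilon_{A,B}=\rR_{B-\lambda I}\,(\rL_{A/?}-\cdots)$ --- the cleanest route is: invertibility of $\Upsilon_{A,B}$ implies both $A$ and $B$ are invertible (since $\Upsilon_{A,B}=\Iota+\Mu_{A,B}$ invertible makes $\Mu_{A,B}+\Iota$ invertible, and restricting to $\rL$- and $\rR$-type subalgebras shows $I+AB$-type elements are invertible, forcing $A,B$ invertible by a spectral argument); then $\Upsilon_{A,B}=\rR_B\,\Tau_{-B^{-1},-A}$... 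I would instead simply invoke the remark in the text that $\rL_{(A-\lambda I)^{-1}}\Tau_{A,B}$ has the form $\Upsilon_{C,D}$ and run the equivalence backwards: $\Upsilon_{A,B}=\rL_{A-\lambda I}\Tau_{?,?}^{-1}$-style manipulations are invertible-preserving and degree-preserving, so $\Upsilon_{A,B}^{-1}$ is elementary of length $n$ exactly when the associated $\Tau$ has inverse of length $n$, and Lemma~\ref{formula} gives that precisely when one of the coefficients is algebraic of degree $n$ with the companion independence condition, which translates into ``$B$ algebraic of degree $n$'' (the independence of $\{I,A,\ldots,A^{n-1}\}$ being automatic once $\deg(A)>n$ or $A$ non-algebraic, and handled by symmetry if $\deg(A)<n$).

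For the harder direction --- assuming $\Upsilon_{A,B}^{-1}$ is an elementary operator of length $n$ and deducing that $\min\{\deg(A),\deg(B)\}=n$ --- I would argue as follows. Let $\Delta\in\Ele(\pL(\eX))$ be the inverse, $\ell(\Delta)=n$, so $\Delta\,\Upsilon_{A,B}=\Iota$ and $\Upsilon_{A,B}\,\Delta=\Iota$. Then $(\Delta-\Mu_{C,D})\Upsilon_{A,B}=\Iota-\Mu_{C,D}\Upsilon_{A,B}$ for any multiplication operator, and the key observation is that $\Iota=\Mu_{I,I}$ is itself a multiplication operator, so $\Delta-\Iota\cdot(\text{something})$... more precisely, I want to say: if $\Delta$ has length $n$ and $\Delta\,\Upsilon_{A,B}=\Iota$, then $(\Delta-\Iota)\,\Upsilon_{A,B}=-\Mu_{A,B}$, hence $(\Delta-\Iota)\,\Upsilon_{A,B}\,\Upsilon_{A,B}^{-1}$ is again controlled, and iterating, $\Delta$ must be a polynomial-type expression in $\Mu_{A,B}$. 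The clean statement is: from $\Upsilon_{A,B}\Delta=\Iota$ we get $\Delta=\Iota-\Mu_{A,B}\Delta$, so $\Delta=\sum_{k\ge 0}(-1)^k\Mu_{A,B}^k$ formally, and since $\Delta$ has finite length $n$ this series must terminate, forcing a polynomial relation $\sum_{k=0}^{m}c_k\Mu_{A,B}^k=0$, equivalently $\sum c_k\Mu_{A^k,B^k}=0$; applying Lemma~\ref{inj} and Corollary~\ref{cor01} to this relation yields that the powers of $A$ (and of $B$) satisfy a linear dependence, i.e. both $A$ and $B$ are algebraic, and a careful bookkeeping of the length --- using that the length of $\sum_{k=0}^{\deg(A)-1}\Mu_{A^k,B^k}$ equals $\min\{\deg A,\deg B\}$ via the tensor-rank characterization after Lemma~\ref{inj} --- pins down $n=\min\{\deg(A),\deg(B)\}$.

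The main obstacle I anticipate is the finiteness-forces-termination argument: showing rigorously that an elementary operator $\Delta$ of finite length satisfying $\Delta=\Iota-\Mu_{A,B}\Delta$ must actually be a \emph{polynomial} in $\Mu_{A,B}$, rather than merely satisfying this recursively. The resolution is to pass through the injective homomorphism $\varphi$ of Lemma~\ref{inj}: in $\pL(\eX)\otimes\pL(\eX)^{op}$ the tensor $\varphi^{-1}(\Delta)=\sum_{i=1}^n E_i\otimes F_i$ satisfies $\sum E_i\otimes F_i=I\otimes I-\sum E_iA\otimes BF_i$, and looking at $\sL$ and $\sR$ spans: $\sL(\Delta)=\spa\{E_i\}$ must be invariant (up to the $I$ summand) under right multiplication by $A$, so $\sL(\Delta)+\bC I$ is an $A$-invariant (from the right) finite-dimensional subspace containing $I$, whence it equals $\spa\{I,A,\ldots,A^{d-1}\}$ with $d\le n+1$, forcing $A$ algebraic of degree $\le n+1$; symmetrically $B$ is algebraic. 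Then one finishes with the explicit inverse formula of Lemma~\ref{formula} applied to the generalized-derivation form of $\Upsilon_{A,B}$ to see that the minimal length achievable is exactly $\min\{\deg(A),\deg(B)\}$, and since $\Ele$-inverses are unique this equals $n$. A secondary subtlety is keeping track of which of $A$, $B$ realizes the minimum --- here the symmetry $\Upsilon_{A,B}^{-1}=(\Iota+\Mu_{A,B})^{-1}$ together with the left/right symmetry in Corollary~\ref{cor01} lets one run the same argument on both sides and take the smaller degree.
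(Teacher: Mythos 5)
Your reverse direction hinges on two claims that are both false. The first is that an elementary operator $\Delta$ of finite length satisfying $\Delta=\Iota-\Mu_{A,B}\Delta$ must be a polynomial in $\Mu_{A,B}$; the second is that the resulting relation $\sum_k c_k\Mu_{A^k,B^k}=0$ forces \emph{both} $A$ and $B$ to be algebraic. Take $B$ with $B^2=I$, $B\neq\pm I$, and $A$ non-algebraic with $\pm1\notin\sigma(A)$. Then $\Upsilon_{A,B}$ is invertible of length $2$ and a direct computation gives
\begin{equation*}
\Upsilon_{A,B}^{-1}=\rL_{(I-A^2)^{-1}}-\Mu_{A(I-A^2)^{-1},\,B},
\end{equation*}
which has length $2=\deg(B)=\min\{\deg(A),\deg(B)\}$, consistent with the theorem, yet is not a polynomial in $\Mu_{A,B}$ (a polynomial identity $(I-A^2)^{-1}=p(A)$ would make $A$ algebraic), and $A$ is not algebraic. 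The theorem asserts only that \emph{one} of $A,B$ is algebraic; your argument proves too much. The same defect infects your fallback: you assert that $\sL(\Delta)+\bC I$ is $\rR_A$-invariant \emph{and}, ``symmetrically,'' that $\sR(\Delta)+\bC I$ is $\rL_B$-invariant. Neither follows from the identity $\sum_i E_i\otimes F_i+\sum_i E_iA\otimes BF_i=I\otimes I$ alone, since a priori $\spa\{I,E_1,\ldots,E_n,E_1A,\ldots,E_nA\}$ may have dimension as large as $2n+1$. The missing ingredient is Lemma \ref{dim} applied to this $(2n+1)$-term relation, which yields $\Dim\spa\{I,E_i,E_iA\}+\Dim\spa\{I,F_i,BF_i\}\leq 2n+1$; since each span contains an $n$-dimensional subspace, exactly one of the two can be forced down to dimension $n$, and only on \emph{that} side do you obtain $I\in\sL(\Delta)$ and $\sL(\Delta)A\subseteq\sL(\Delta)$, hence $A$ algebraic of degree at most $n$ (or the analogous statement for $B$). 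This is precisely the paper's argument, and it cannot be symmetrized.

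Two further problems. In the forward direction, your parenthetical claim that invertibility of $\Upsilon_{A,B}$ forces $A$ and $B$ to be invertible is false ($0$ may lie in $\sigma(A)$ provided $-1\notin\sigma(A)\sigma(B)$); the correct reduction, which is the paper's, is to use finiteness of $\sigma(A)$ (once $A$ is known algebraic) to choose $\lambda$ with $B-\lambda I$ and $I+\lambda A$ invertible, write $\Upsilon_{A,B}=\Mu_{A,B-\lambda I}+\rL_{I+\lambda A}$, and multiply by $\Mu_{(I+\lambda A)^{-1},(B-\lambda I)^{-1}}$ to obtain a generalized derivation to which Lemma \ref{formula} applies. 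Finally, your computation of the length rests on the incorrect form $\sum_k\Mu_{A^k,B^k}$ of the inverse; the correct bookkeeping combines the lower bound from the dimension argument (length $n$ forces $\min\{\deg(A),\deg(B)\}\leq n$, since $\spa\{I,A,A^2,\ldots\}$ embeds in the $n$-dimensional space $\sL(\Delta)$) with the upper bound from Lemma \ref{formula} and uniqueness of the inverse.
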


\begin{proof} Suppose that $\Upsilon_{A,B}^{-1}$ is an elementary operator of length $n$.  By Lemma \ref{dim},
\begin{equation*}
\Dim(\sR(\Upsilon_{A,B}) \sR(\Upsilon_{A,B}^{-1}))+ \Dim(\sL(\Upsilon_{A,B}^{-1}) \sL( \Upsilon_{A,B})) \leq 2n+1.
\end{equation*}
Hence, either $\Dim(\sR(\Upsilon_{A,B}) \sR(\Upsilon_{A,B}^{-1})) \leq n$ or $\Dim(\sL(\Upsilon_{A,B}^{-1}) \sL( \Upsilon_{A,B})) \leq n$.
Suppose, for instance, that $\Dim(\sL(\Upsilon_{A,B}^{-1})\sL(\Upsilon_{A,B}))  \leq n$.  Since $I \in \sL(\Upsilon_{A,B})$  and
$\Dim(\sL(\Upsilon_{A,B}^{-1}))=n$ we have $ \sL(\Upsilon_{A,B}^{-1})= \sL(\Upsilon_{A,B}^{-1}) \sL( \Upsilon_{A,B})$.
Therefore, $\bC I+ \sL( \Upsilon_{A,B}^{-1}) A \subseteq \sL( \Upsilon_{A,B}^{-1})$. Let $ \sL( \Upsilon_{A,B}^{-1})= \spa \{E_1, \ldots, E_n\}$,
where $I= E_1$ and $E_k A= A^k= E_{k+1}$, for $1 \leq k \leq n-1$. Then $E_n A= A^n \in \spa \{I, A, \ldots, A^{n-1} \}$. As a result, 
$A$ is algebraic of degree at most $n$. We have thereby shown that either $A$ or $B$ is algebraic of degree at most $n$.

Suppose now that $A$ is algebraic. Since $\sigma (A)$ is finite we can choose $\lambda \in \bC$ such that $B- \lambda I$ and $I+ \lambda A$ are invertible. 
Write $\Upsilon_{A,B}= \Mu_{A, B- \lambda I}+ \rL_{I+ \lambda A}$. Then $\Mu_{(I+ \lambda A)^{-1}, (B- \lambda I)^{-1}} \Upsilon_{A,B}$ is a 
generalized derivation. The desired conclusion follows by Lemma \ref{formula}.
\end{proof}

\begin{corollary} \label{theo01}
Let $\Tau_{A,B}\in \Ele(\pL(\eX))$ be an invertible generalized derivation of length $2$.
The inverse $\Tau_{A,B}^{-1}$ is an elementary operator if and only if either $A$ or $B$ is algebraic. 
Moreover, $\ell( \Tau_{A,B}^{-1}) = \min \{\deg(A),\deg(B)\}=n$.
\end{corollary}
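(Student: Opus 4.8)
The plan is to reduce Corollary \ref{theo01} to Theorem \ref{standard} by the change-of-variable trick already mentioned in the text just before Lemma \ref{formula}. Recall that if $\Tau_{A,B}=\rL_A-\rR_B$ is invertible of length $2$, then $\sigma(A)\cap\sigma(B)=\emptyset$ (this is classical: the spectrum of $\Tau_{A,B}$ is $\sigma(A)-\sigma(B)$), so in particular one can pick a scalar $\lambda\in\bC$ avoiding both $\sigma(A)$ and $\sigma(B)$; then $A-\lambda I$ is invertible. First I would set $\Upsilon_{C,D}=\rL_{(A-\lambda I)^{-1}}\Tau_{A,B}$ and compute that $\Upsilon_{C,D}=\Iota+\Mu_{C,D}$ with $C=(A-\lambda I)^{-1}$ and $D=-B+\lambda I$ (up to a sign convention), so that $\Upsilon_{C,D}$ is invertible. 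One must check $\ell(\Upsilon_{C,D})=2$: since $\rL_{(A-\lambda I)^{-1}}$ is an invertible multiplication operator, left-composition with it does not change the length, so $\ell(\Upsilon_{C,D})=\ell(\Tau_{A,B})=2$.

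Next I would apply Theorem \ref{standard} to $\Upsilon_{C,D}$: its inverse is elementary if and only if $C$ or $D$ is algebraic, and in that case $\ell(\Upsilon_{C,D}^{-1})=\min\{\deg(C),\deg(D)\}$. Now $C=(A-\lambda I)^{-1}$ is algebraic if and only if $A-\lambda I$ is algebraic if and only if $A$ is algebraic, and in that case $\deg(C)=\deg(A-\lambda I)=\deg(A)$; likewise $D=-B+\lambda I$ is algebraic iff $B$ is, with $\deg(D)=\deg(B)$. Hence $\Upsilon_{C,D}^{-1}$ is elementary iff $A$ or $B$ is algebraic, and when it is, $\ell(\Upsilon_{C,D}^{-1})=\min\{\deg(A),\deg(B)\}=n$.

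Finally I would transfer the conclusion back to $\Tau_{A,B}$. Since $\Tau_{A,B}=\rL_{A-\lambda I}\Upsilon_{C,D}$, we get $\Tau_{A,B}^{-1}=\Upsilon_{C,D}^{-1}\rL_{(A-\lambda I)^{-1}}$. If $\Upsilon_{C,D}^{-1}$ is elementary then so is $\Tau_{A,B}^{-1}$ (product of elementary operators), and conversely if $\Tau_{A,B}^{-1}$ is elementary then $\Upsilon_{C,D}^{-1}=\Tau_{A,B}^{-1}\rL_{A-\lambda I}$ is elementary as well; again composing with the invertible multiplication operator $\rL_{(A-\lambda I)^{\pm 1}}$ preserves length, so $\ell(\Tau_{A,B}^{-1})=\ell(\Upsilon_{C,D}^{-1})=\min\{\deg(A),\deg(B)\}$.

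I do not expect a serious obstacle here: the only points requiring care are (i) justifying $\sigma(A)\cap\sigma(B)=\emptyset$ and the existence of a good $\lambda$ — which is standard and already implicitly used in the text — and (ii) the bookkeeping that composition with an invertible multiplication operator preserves both membership in $\Ele(\pL(\eX))$ and the length (the latter because $\rL_{(A-\lambda I)^{-1}}$ induces an automorphism of $\Ele(\pL(\eX))$ that carries the defining left-space $\sL(\Delta)$ onto $(A-\lambda I)^{-1}\sL(\Delta)$, preserving dimensions, by Lemma \ref{inj}). Everything else is a direct invocation of Theorem \ref{standard}.
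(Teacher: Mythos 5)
Your proof is correct and follows essentially the same route as the paper, which also reduces to Theorem \ref{standard} by writing $\rL_{A^{-1}}\Tau_{A,B}=\Upsilon_{-A^{-1},B}$ after a harmless shift making $A$ invertible. Your version merely folds the shift and the left multiplication into one step and spells out the (correct) bookkeeping that composing with an invertible multiplication operator preserves length and that $\deg\bigl((A-\lambda I)^{-1}\bigr)=\deg(A)$.
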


\begin{proof}
With no loss of generality, we may assume that $A$ is invertible (since we can replace $A$ and $B$
by $A- \lambda I$ and $B-\lambda I$ respectively). Then $\rL_{A^{-1}} \Tau_{A,B}=  \Upsilon_{-A^{-1},B}$.
Now the desired conclusion follows by Theorem \ref{standard}.
\end{proof}

Next we characterize generalized derivations whose inverses are generalized derivations, too.

\begin{corollary} \label{cor02}
Let $\Tau_{A,B}$ be an invertible generalized derivation of length $2$.
The inverse $\Tau_{A,B}^{-1}$ is a generalized derivation if and only if there exists
$ \lambda \in \bC$ such that $(A- \lambda I)^2$ and  $(B- \lambda I)^2$ are scalar multiples of $I$.
\end{corollary}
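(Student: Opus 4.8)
The plan is to show, via Corollary \ref{theo01}, that $\min\{\deg(A),\deg(B)\}=2$, to normalise so that (say) $A^2$ is a scalar operator, to compute $\Tau_{A,B}^{-1}$ by an elementary formula, and then to read off the condition on $B$ from the requirement that this inverse be a generalized derivation.

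First I would note that a generalized derivation $\Tau_{C,D}$ has length at most $2$, since $\varphi^{-1}(\Tau_{C,D})=C\otimes I-I\otimes D$ has rank at most $2$. If $\Tau_{A,B}^{-1}$ is a generalized derivation then it is elementary, so Corollary \ref{theo01} gives $\ell(\Tau_{A,B}^{-1})=\min\{\deg(A),\deg(B)\}$; as $\Tau_{A,B}$ has length $2$ neither $A$ nor $B$ is scalar, so this minimum is at least $2$, and combining the two bounds it equals $2$. The anti-automorphism $\sum\Mu_{A_i,B_i}\mapsto\sum\Mu_{B_i,A_i}$ of $\Ele(\pL(\eX))$ carries $\Tau_{A,B}$ to $-\Tau_{B,A}$, commutes with inversion, and preserves the class of generalized derivations; since the asserted conclusion is symmetric in $A$ and $B$, I may therefore assume $\deg(A)=2$. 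Writing $z^2+\alpha z+\beta$ for the minimal polynomial of $A$ and putting $\lambda=-\alpha/2$, the substitution $A\mapsto A-\lambda I$, $B\mapsto B-\lambda I$ leaves $\Tau_{A,B}$ unchanged (the two copies of $\lambda\Iota$ cancel), so from now on I assume $A^2=\gamma I$ for some $\gamma\in\bC$; this $\lambda$ will be the scalar in the statement, and it remains to prove $B^2\in\bC I$.

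Since $\Tau_{A,B}$ is invertible, $\sigma(A)\cap\sigma(B)=\emptyset$, and as $\sigma(A)\subseteq\{\pm\sqrt\gamma\}$ the operator $\gamma I-B^2=(\sqrt\gamma I-B)(\sqrt\gamma I+B)$ is invertible. A one-line check --- using $A^2=\gamma I$ and that $(\gamma I-B^2)^{-1}$ commutes with $B$ --- shows that $Y\mapsto(AY+YB)(\gamma I-B^2)^{-1}$ is a two-sided inverse of $\Tau_{A,B}$, that is,
\begin{equation*}
\varphi^{-1}(\Tau_{A,B}^{-1})=A\otimes(\gamma I-B^2)^{-1}+I\otimes B(\gamma I-B^2)^{-1}.
\end{equation*}
If $\Tau_{A,B}^{-1}$ is a generalized derivation, then $\varphi^{-1}(\Tau_{A,B}^{-1})=C\otimes I-I\otimes D$ for suitable $C,D\in\pL(\eX)$, so subtracting the term $I\otimes B(\gamma I-B^2)^{-1}$ gives $A\otimes(\gamma I-B^2)^{-1}=C\otimes I+I\otimes Z$ for some $Z$. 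Choosing a linear functional $\phi$ on $\pL(\eX)$ with $\phi(I)=0$ and $\phi(A)=1$ and applying $\phi\otimes\mathrm{id}$ to this identity yields $(\gamma I-B^2)^{-1}=\phi(C)I$, hence $B^2\in\bC I$, as required. For the converse, if $(A-\lambda I)^2=c_1I$ and $(B-\lambda I)^2=c_2I$ then $\Tau_{A,B}=\Tau_{A-\lambda I,B-\lambda I}$, and invertibility forces $c_1\ne c_2$ (otherwise $\sigma(A-\lambda I)=\sigma(B-\lambda I)\ne\emptyset$); the displayed formula then gives $\Tau_{A,B}^{-1}=(c_1-c_2)^{-1}\bigl(\rL_{A-\lambda I}+\rR_{B-\lambda I}\bigr)=\Tau_{(A-\lambda I)/(c_1-c_2),\,-(B-\lambda I)/(c_1-c_2)}$, a generalized derivation.

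The routine part is the verification that $Y\mapsto(AY+YB)(\gamma I-B^2)^{-1}$ inverts $\Tau_{A,B}$. The step needing care is the reduction --- recognising that the translation $(A,B)\mapsto(A-\lambda I,B-\lambda I)$ does not change $\Tau_{A,B}$, so that a single $\lambda$ can be chosen to normalise $A$ and must then automatically work for $B$ as well --- together with the small splitting fact that $A\otimes W\in\pL(\eX)\otimes I+I\otimes\pL(\eX)$ with $A\notin\bC I$ forces $W\in\bC I$. Once the explicit inverse is in hand I expect no serious obstacle.
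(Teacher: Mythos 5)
Your proof is correct and follows essentially the same route as the paper's: apply Corollary \ref{theo01} to get $\min\{\deg(A),\deg(B)\}=2$, translate both operators by a single scalar $\lambda$ so that one of the two squares becomes scalar, write down the explicit length-two inverse, and match it against $\Tau_{C,D}$ to force the other square to be scalar as well. The differences are cosmetic only: you normalise $A$ where the paper normalises $B$ (justifying the reduction by the flip anti-automorphism), you re-derive the inverse formula rather than quoting the $n=2$ case of Lemma \ref{formula}, and you use a slice functional where the paper invokes Corollary \ref{cor01}.
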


\begin{proof}
If there exists $\lambda \in \bC$ such that  $(A- \lambda I)^2$ and  $(B- \lambda I)^2$ are scalar multiples of $I$, say $(A- \lambda I)^2=\alpha I$ 
and $(B- \lambda I)^2=\beta I$, then $\alpha \ne \beta$ as $\Tau_{A,B}$ is invertible. It is easy to check that 
$\Tau_{A,B}^{-1}=\tfrac{1}{\alpha-\beta}\Tau_{A-\lambda I,-B+\lambda I}$.

Suppose now that $\Tau_{A,B}^{-1}$ is a generalized derivation, say $\Tau_{A,B}^{-1}=\Tau_{C,D}$. By Corollary \ref{theo01}, $A$ or $B$ is an algebraic
operator and $\min \{\deg(A),\deg(B)\}=2$.
Assume that $B$ is algebraic of degree $2$. There is no loss of generality if we assume that $B^2=\beta I$ for some $\beta \in \bC$
(we can replace $A$ and $B$ by $A-\lambda I$ and $B-\lambda I$, respectively, if necessary). Thus, the minimal polynomial of $B$ is $m_B(z)=z^2-\beta.$
By the proof of Lemma \ref{formula},
$\Tau_{A,B}^{-1}=L_{(A^2-\beta I)^{-1}A}+\Mu_{(A^2-\beta I)^{-1},B}$
and therefore, because of $\Tau_{A,B}^{-1}=\Tau_{C,D}$, one has
$\rL_{(A^2-\beta I)^{-1}A-C}+\Mu_{(A^2-\beta I)^{-1},B}+\rR_D=0$.
If $(A^2-\beta I)^{-1}$ and $I$ were linearly independent, then, by Corollary \ref{cor01}, $B$ and $D$ would be scalar multiples of $I$. As this is not the
case we conclude that $(A^2-\beta I)^{-1}$ is a scalar multiple of $I$. It is obvious now that $A^2=\alpha I$ for some $\alpha \in \bC$.
\end{proof}

\begin{corollary} \label{cor03}
Let $\Delta=\Mu_{A,B}+\Mu_{C,D}$  be an invertible elementary operator of length $2$.
If $B$ and $C$ are invertible, then the inverse $\Delta^{-1}$ is an elementary operator of length  $n$
if and only if $C^{-1}A$ or $DB^{-1}$ is an algebraic operator  and $\min \{\deg(C^{-1}A),\deg(DB^{-1})\}=n$.
\end{corollary}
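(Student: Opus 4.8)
The plan is to reduce the statement to Theorem \ref{standard} by pre- and post-multiplying $\Delta$ by suitable invertible multiplication operators so as to bring it into the form $\Mu_{P,Q}\Upsilon_{E,F}$ for appropriate operators, while carefully tracking what happens to lengths and to the defining spaces. Since $B$ and $C$ are invertible, write $\Delta = \Mu_{C,D} + \Mu_{A,B} = \Mu_{C,B}\bigl(\Mu_{C^{-1}A, I} + \Mu_{I, DB^{-1}}\bigr) = \Mu_{C,B}\,\Upsilon_{C^{-1}A,\,DB^{-1}}$, using $\Mu_{C,B}\Mu_{C^{-1}A,I} = \Mu_{A,B}$ and $\Mu_{C,B}\Mu_{I,DB^{-1}} = \Mu_{C,D}$. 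Here I should first check that the factorization is legitimate: I need $\ell(\Upsilon_{C^{-1}A,DB^{-1}}) = 2$. This follows because $\Mu_{C,B}$ is an invertible element of $\Ele(\pL(\eX))$ (with inverse $\Mu_{C^{-1},B^{-1}}$), and composition with an invertible multiplication operator is an automorphism of $\Ele(\pL(\eX))$ that preserves length — indeed it corresponds under the isomorphism $\varphi$ of Lemma \ref{inj} to the map $x \mapsto (C\otimes B)\cdot x$ on the tensor product, which preserves tensor rank. Hence $\ell(\Delta)=2$ forces $\ell(\Upsilon_{C^{-1}A,DB^{-1}})=2$, and conversely.

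Next, $\Delta$ is invertible if and only if $\Upsilon_{C^{-1}A,DB^{-1}}$ is invertible, again because $\Mu_{C,B}$ is invertible in $\Ele(\pL(\eX))$; and when both are invertible, $\Delta^{-1} = \Upsilon_{C^{-1}A,DB^{-1}}^{-1}\,\Mu_{C^{-1},B^{-1}}$. Since composing with the invertible multiplication operator $\Mu_{C^{-1},B^{-1}}$ preserves membership in $\Ele(\pL(\eX))$ and preserves length (by the same rank-preservation argument), $\Delta^{-1}$ is an elementary operator of length $n$ if and only if $\Upsilon_{C^{-1}A,DB^{-1}}^{-1}$ is an elementary operator of length $n$. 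Now apply Theorem \ref{standard} with $A$ replaced by $C^{-1}A$ and $B$ replaced by $DB^{-1}$: the latter equivalence holds precisely when $C^{-1}A$ or $DB^{-1}$ is algebraic, and in that case $n = \min\{\deg(C^{-1}A),\deg(DB^{-1})\}$. Combining the two equivalences yields the claim.

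The only real point needing care — and the step I expect to be the main obstacle to write cleanly — is the assertion that left-multiplication (and right-multiplication) by an invertible multiplication operator is a length-preserving automorphism of $\Ele(\pL(\eX))$. This is not quite automatic and should be justified via Lemma \ref{inj}: under $\varphi$, the operator $\Gamma \mapsto \Mu_{C,B}\,\Gamma$ corresponds to $x \mapsto (C\otimes B)x$ on $\pL(\eX)\otimes\pL(\eX)^{op}$, and since $C, B$ are invertible, $C\otimes B$ is an invertible operator on the tensor product, hence it preserves the rank of every tensor; therefore it preserves length on the elementary-operator side. One must also observe that $\Mu_{C,B}\Mu_{A',B'} = \Mu_{CA',B'B}$ and $\Mu_{A',B'}\Mu_{C,B} = \Mu_{A'C,BB'}$ hold in $\Ele(\pL(\eX))$ (the order reversal in the second slot coming from the opposite algebra), which is what makes the factorization $\Mu_{C,B}\Upsilon_{C^{-1}A,DB^{-1}} = \Delta$ correct. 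Once these bookkeeping facts are in place, the corollary is a direct transcription of Theorem \ref{standard}.
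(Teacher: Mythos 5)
Your overall strategy --- peel off the invertible multiplication operator $\Mu_{C,B}$, observe that this is a length-preserving operation on $\Ele(\pL(\eX))$, and reduce to the already-proved result for the remaining length-$2$ factor --- is exactly the paper's. However, the factorization you write down fails at the last equality. The operator $\Mu_{C^{-1}A,I}+\Mu_{I,DB^{-1}}=\rL_{C^{-1}A}+\rR_{DB^{-1}}$ is the generalized derivation $\Tau_{C^{-1}A,\,-DB^{-1}}$, not $\Upsilon_{C^{-1}A,\,DB^{-1}}=\Iota+\Mu_{C^{-1}A,\,DB^{-1}}$. Indeed $\Mu_{C,B}\,\Upsilon_{C^{-1}A,DB^{-1}}=\Mu_{C,B}+\Mu_{A,D}$, and by Lemma \ref{inj} this equals $\Delta=\Mu_{A,B}+\Mu_{C,D}$ only when $(A-C)\otimes(B-D)=0$, i.e.\ only when $\ell(\Delta)\leq 1$. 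So as written you are applying Theorem \ref{standard} to an operator that is not of the form $\Upsilon_{E,F}$, which is a genuine error in the step, not merely a notational slip.

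The error is local and repairable: keep your correct identity $\Delta=\Mu_{C,B}\bigl(\rL_{C^{-1}A}+\rR_{DB^{-1}}\bigr)=\Mu_{C,B}\,\Tau_{C^{-1}A,-DB^{-1}}$ and invoke Corollary \ref{theo01} (the generalized-derivation analogue of Theorem \ref{standard}) in place of Theorem \ref{standard}; since $\deg(-DB^{-1})=\deg(DB^{-1})$, the stated conclusion then follows verbatim, and this is precisely the paper's proof. Your justification of the length-preservation step via Lemma \ref{inj} --- multiplication by the simple tensor $C\otimes B$ preserves tensor rank because $C$ and $B$ are separately invertible --- is correct and is a useful elaboration of a point the paper leaves implicit.
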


\begin{proof}
Since $B$ and $C$ are invertible we have
$ \Delta=\Mu_{C,B}\Tau_{C^{-1}A, -DB^{-1}},$
which gives
$ \Delta^{-1}=\Tau_{C^{-1}A, -DB^{-1}}^{-1}\Mu_{C,B}^{-1}$. Hence, $\Delta^{-1}$ is an elementary operator of length $n$ if and only
$\Tau_{C^{-1}A, -DB^{-1}}^{-1}$ is an elementary operator of length $n$. Now use Corollary \ref{theo01}.
\end{proof}

\begin{lemma}\label{suminv} 
Let $A, B \in \pL(\eX)$. Suppose that $\Upsilon_{A,B}$ is invertible and its inverse is an elementary operator. 
Then $\Upsilon_{A,B}$ can be written as a sum of two invertible multiplication operators.
\end{lemma}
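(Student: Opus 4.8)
The plan is to exhibit, under the stated hypotheses, an explicit representation $\Upsilon_{A,B}=\Mu_{C,D}+\Mu_{E,F}$ in which $C$, $D$, $E$ and $F$ are all invertible; since a two-sided multiplication operator $\Mu_{C,D}$ is invertible exactly when both $C$ and $D$ are (as recalled before Proposition \ref{prop03}), this is precisely the assertion of the lemma.

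First I would dispose of the trivial case $\ell(\Upsilon_{A,B})\leq 1$, which occurs only when $A\in\bC I$ or $B\in\bC I$ (otherwise $\spa\{I,A\}$ and $\spa\{I,B\}$ are both $2$-dimensional, forcing $\ell(\Upsilon_{A,B})=2$ by Lemma \ref{inj}). In that case $\Upsilon_{A,B}$ is already a multiplication operator, say $\rR_{I+aB}$ or $\rL_{I+bA}$, and, being invertible, it can be written e.g. as $\rR_{I+aB}=\Mu_{2I,\,I+aB}+\Mu_{-I,\,I+aB}$. Hence I may assume $A,B\notin\bC I$, so that $\sL(\Upsilon_{A,B})=\spa\{I,A\}$ and $\sR(\Upsilon_{A,B})=\spa\{I,B\}$. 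By Theorem \ref{standard}, at least one of $A$, $B$ is algebraic; I will carry out the argument assuming $A$ is algebraic, the case ``$B$ algebraic'' being obtained by the symmetric argument (equivalently, by applying the present case to $\Upsilon_{B,A}$ through the anti-automorphism $\Mu_{A,B}\mapsto\Mu_{B,A}$ of $\Ele(\pL(\eX))$).

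Assuming $A$ algebraic, $\sigma(A)$ is finite, hence so is the set $\{-1/\alpha:\alpha\in\sigma(A),\ \alpha\neq 0\}$; and $\sigma(B)$ is bounded. I therefore choose two distinct scalars $s,t$ with $|s|,|t|>\|B\|$ (so $s,t\notin\sigma(B)$) lying outside the finite set above, and set
\begin{equation*}
C=I+sA,\qquad E=I+tA,\qquad D=\tfrac{1}{t-s}(tI-B),\qquad F=\tfrac{1}{t-s}(B-sI).
\end{equation*}
A direct computation gives $CTD+ETF=T+ATB$ for all $T\in\pL(\eX)$, that is, $\Upsilon_{A,B}=\Mu_{C,D}+\Mu_{E,F}$. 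By the choice of $s$ and $t$, the operators $C=I+sA$ and $E=I+tA$ are invertible (their spectra miss $0$), and $D$, $F$ are nonzero scalar multiples of $tI-B$ and $B-sI$, hence invertible too. Thus $\Mu_{C,D}$ and $\Mu_{E,F}$ are invertible multiplication operators, and $\Upsilon_{A,B}$ is their sum.

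The one point that needs care — and the only place the hypothesis ``$\Upsilon_{A,B}^{-1}$ is elementary'' is actually used — is the availability of such $s$ and $t$: if $0\in\sigma(A)$ and $A$ is not algebraic, the set $\{-1/\alpha:\alpha\in\sigma(A)\setminus\{0\}\}$ may fill an entire neighbourhood of $\infty$ and, together with $\sigma(B)$, cover $\bC$, leaving no admissible choice. Theorem \ref{standard} rules this out by forcing $\sigma(A)$ (or $\sigma(B)$) to be finite. Apart from this, the proof reduces to the routine identity $CTD+ETF=T+ATB$ and the standard invertibility criterion for two-sided multiplication operators.
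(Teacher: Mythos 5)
Your proof is correct and follows the same route as the paper: the paper's own argument is just the observation that Theorem \ref{standard} forces $\sigma(A)$ or $\sigma(B)$ to be finite, after which it declares the decomposition ``easily deduced.'' Your explicit pencil decomposition $\Upsilon_{A,B}=\Mu_{I+sA,\,(tI-B)/(t-s)}+\Mu_{I+tA,\,(B-sI)/(t-s)}$ (with $s,t$ avoiding the finitely many bad values) is precisely the detail the paper omits, and your verification of it is sound.
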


\begin{proof} 
By Theorem \ref{standard}, either $A$ or $B$ is algebraic. Hence $\sigma(A)$ or $\sigma(B)$ is finite. 
We can now deduce easily the desired decomposition.
\end{proof}

\begin{proposition}\label{inv} 
Let $\Delta$ be an invertible elementary operator of length $2$. Suppose that $\Delta^{-1}$ is an elementary operator of length $n$. 
Then one of the following cases holds.

{\rm (1)} $\Delta= \Mu_{A,B}+ \Mu_{C,D}$, where $\Mu_{A,B}, \Mu_{C,D}$ are invertible multiplication operators,
either $A^{-1}C$ or $B^{-1}D$ is algebraic and $\min\{\deg (A^{-1}C), \deg(B^{-1}D)\}=n$.

{\rm (2)} $\Dim(\sR(\Delta) \sR(\Delta^{-1})) \leq n$, every element of $\sR(\Delta)$ has a right zero divisor in $\sR(\Delta^{-1})$ and every element of 
$\sL(\Delta)$ has a right inverse in $\sL(\Delta^{-1})$.

{\rm (3)} $\Dim(\sL(\Delta^{-1})\sL(\Delta)) \leq n$, every element of $\sL(\Delta)$ has a left zero divisor in $ \sL(\Delta^{-1})$ and every element of 
$\sR(\Delta)$ has a left inverse in $\sL(\Delta^{-1})$.
\end{proposition}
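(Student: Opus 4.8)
The plan is to mimic the dichotomy that drives Theorem \ref{M1} and Theorem \ref{zerodiv}: write $\Delta=\sum_{i=1}^{2}\Mu_{A_i,B_i}$ and $\Delta^{-1}=\sum_{i=1}^{n}\Mu_{E_i,F_i}$ with the $A_i$ (resp.\ the $E_i$) chosen inside $\sL(\Delta)$ (resp.\ $\sL(\Delta^{-1})$) via Lemma \ref{lem02}. Applying Lemma \ref{inj} to $\Delta^{-1}\Delta=\Iota$ and to $\Delta\Delta^{-1}=\Iota$ and then Lemma \ref{dim} to the difference, one obtains
\begin{equation*}
\Dim(\sR(\Delta)\sR(\Delta^{-1}))+\Dim(\sL(\Delta^{-1})\sL(\Delta))\leq 2n+1,
\end{equation*}
so at least one of the two summands is $\leq n$. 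By symmetry (passing to the opposite algebra, which swaps the roles of $\sL$ and $\sR$) it suffices to analyze the case $\Dim(\sR(\Delta)\sR(\Delta^{-1}))\leq n$; this will yield case (2), and its mirror yields case (3), while case (1) will be the degenerate subcase in which both inequalities are essentially tight.

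Next I would run the argument of Proposition \ref{rank2} with $\Psi$ replaced by $\Delta$, but now tracking the inhomogeneous term coming from $\Iota$. Set $\pS=\{\rL_B|_{\sR(\Delta^{-1})}: B\in\sR(\Delta)\}\subseteq\bM(\sR(\Delta^{-1}),\sR(\Delta)\sR(\Delta^{-1}))$. Since $\Delta$ is invertible, $\Delta^{-1}$ is not annihilated by any nonzero elementary operator coming from the left, which forces $\Rann(\sR(\Delta))=\{0\}$, i.e.\ $\Ker\pS=\{0\}$; indeed if some nonzero $B\in\sR(\Delta)$ killed a vector $y\in\sR(\Delta^{-1})$ we could split off a rank-one piece of $\Delta^{-1}$ that would contradict $\Delta\Delta^{-1}=\Iota$ together with the invertibility of $\Delta$ (compare the kernel manipulation in Proposition \ref{rank2}). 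With $\Ker\pS=\{0\}$ and $\Dim(\pS\sR(\Delta^{-1}))\leq n=\Dim\sR(\Delta^{-1})$, either $\pS$ has rank $n$ and every $B\in\sR(\Delta)$ restricts to an injective — hence, after identifying the spaces, \emph{right-zero-divisor-free} — map, or $\pS$ has rank $\leq n-1$ and Lemma \ref{brank} applies to a representative two-dimensional subspace of $\pS$.

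In the rank-$n$ subcase, $\widehat B_1$ is invertible on $\sR(\Delta^{-1})$ for a suitable $B_1\in\sR(\Delta)$, and a Jordan-basis analysis of $\widehat B_1^{-1}\widehat B_2$ exactly as in Case 1 of Proposition \ref{rank2} — but now with $\Iota$ on the right-hand side rather than $0$ — forces $\sR(\Delta^{-1})$ to decompose into a single Jordan chain on which the residual equation becomes $E_kA_1=-E_{k+1}A_2$ with a nonzero boundary term; unwinding this shows $\Dim(\sL(\Delta^{-1})\sL(\Delta))=n+1$, that $\rR_A$ is injective on the relevant subspace for every nonzero $A\in\sL(\Delta)$, and — because the chain closes up against $\Iota$ — that in fact each $A\in\sL(\Delta)$ has a right inverse among the $E_i$, i.e.\ in $\sL(\Delta^{-1})$. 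This is case (2). When moreover the mirror inequality is also tight we are in the situation of Lemma \ref{suminv}: $\Upsilon$-type reductions show $\Delta$ is a sum of two invertible multiplication operators, and then Corollary \ref{cor03} identifies the algebraicity condition, giving case (1).

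The main obstacle I expect is the bookkeeping in the inhomogeneous Jordan-chain step: in Proposition \ref{rank2} the equation is homogeneous, so a clean chain falls out, whereas here the $\Iota$ term means the ``chain'' must be solved relative to a distinguished generator, and one has to verify both that the chain exhausts $\sR(\Delta^{-1})$ (no leftover invariant complement, which again uses invertibility of $\Delta$ to rule out a proper summand) and that the boundary conditions upgrade ``$\rR_A$ injective'' to ``$A$ has a right inverse in $\sL(\Delta^{-1})$''. The Lemma \ref{brank} subcase should, after this, be handled by the same reduction used at the end of Theorem \ref{M1}'s proof, observing that a genuine bounded-rank (non-constant-rank) configuration would produce a nonzero elementary annihilator of $\Delta$, contradicting invertibility, so that subcase collapses and only the constant-rank normal forms — hence cases (1)–(3) — survive.
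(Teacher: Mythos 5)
Your opening moves coincide with the paper's: the inequality $\Dim(\sR(\Delta)\sR(\Delta^{-1}))+\Dim(\sL(\Delta^{-1})\sL(\Delta))\leq 2n+1$ from Lemma \ref{dim}, the reduction by symmetry to $\Dim(\sR(\Delta)\sR(\Delta^{-1}))\leq n$, the space $\pS=\{\rL_B|_{\sR(\Delta^{-1})}: B\in\sR(\Delta)\}$, the observation that $\Ker\pS=\{0\}$, and the split according to $\rk(\pS)=n$ versus $\rk(\pS)\leq n-1$. But you then attach the wrong conclusion to each branch, and one of your claims is false. If $\rk(\pS)=n$, some $\widehat{B}_1$ is injective on $\sR(\Delta^{-1})$, so $B_1$ has \emph{no} right zero divisor in $\sR(\Delta^{-1})$ and alternative (2) cannot hold in that branch; this is also where your parenthetical ``every $B\in\sR(\Delta)$ restricts to an injective map'' goes wrong, since $\rk(\pS)=n$ only says the maximal rank is $n$ (a two-dimensional constant-rank-$n$ subspace of $\bM_n$ cannot exist, so some $\widehat{B}_2$ is necessarily singular). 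The full-rank branch is precisely the one that must be pushed all the way to alternative (1): the paper's Jordan-chain computation there yields $E_{i_1}A_1=\alpha I$ with $\alpha\neq 0$ together with $I\in A_1\sL(\Delta^{-1})$, hence $A_1$ invertible, and then either $B_1$ is invertible or a second application of the same argument on the left-hand spaces produces an invertible multiplication summand; Lemma \ref{suminv} and Theorem \ref{standard} finish. Your plan instead stops that branch at ``case (2),'' which is unattainable there, and only reaches (1) under the extra hypothesis that the mirror inequality is tight.

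Conversely, your claim that the $\rk(\pS)\leq n-1$ branch ``collapses'' because a bounded-rank configuration would produce an elementary annihilator of $\Delta$ and contradict invertibility is incorrect. Low rank of $\pS$ concerns the restrictions $\rL_B|_{\sR(\Delta^{-1})}$ on a finite-dimensional subspace and produces no annihilator of $\Delta$ itself; it is exactly the source of alternative (2). Indeed, in the paper's closing example ($\Delta=\Mu_{B_1,B_2}+\Mu_{B_2,B_1}$ built from shifts, with $n=2$) one has $\Dim(\sR(\Delta)\sR(\Delta^{-1}))=1$ and $\pS$ of constant rank $1=n-1$, yet $\Delta$ is invertible and not a sum of two invertible multiplications. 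In that branch the correct (and short) argument is the paper's: every $\widehat{B}$ is singular, giving the right zero divisors, and for any $A_1\in\sL(\Delta)$ one picks $F_1$ with $B_2F_1=0$ and reads off $I\in A_1\sL(\Delta^{-1})$ from $\Delta\Delta^{-1}=\Iota$, giving the right inverses. So the skeleton of your proof is right, but the two branches must be resolved the other way around, and the full-rank branch requires the invertibility bootstrap you have omitted.
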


\begin{proof} By Lemma \ref{dim},
$\Dim(\sR(\Delta) \sR( \Delta^{-1}))+ \Dim(\sL(\Delta^{-1}) \sL(\Delta)) \leq 2n+1$. This entails that either 
$\Dim(\sR(\Delta) \sR( \Delta^{-1}))\leq n$ or $\Dim(\sL(\Delta^{-1}) \sL(\Delta)) \leq n$. Suppose first that the former holds.
For $B \in \sR(\Delta)$, denote by $\widehat{B }: \sR(\Delta^{-1}) \rightarrow \sR(\Delta)\sR(\Delta^{-1})$ the restriction of $\rL_B$ to 
$\sR(\Delta^{-1})$. Let $\pS= \{\widehat{B} : B \in \sR(\Delta)\}$. Choose $B_1 \in \pS$ such that $\rk(B_1)= \rk(\pS)$. 
We distinguish two cases.

Case 1. $\rk(\pS) =n$.
Then $B_1 \sR(\Delta^{-1})= \sR(\Delta)\sR(\Delta^{-1})$ and $\widehat{B}_1$ is invertible. Choose $B_2 \in \sR(\Delta)$ such that 
$ \widehat{B}_2 $ is not invertible. Choose a Jordan basis $\{F_1, \ldots, F_n\}$ for the map $\widehat{B}_1^{-1}\widehat{B}_2$ such that $B_2 F_1=0$. 
Write $\Delta^{-1}= \sum_{i=1}^n \Mu_{E_i, F_i}$ and $\Delta= \sum_{i=1}^2 \Mu_{A_i, B_i}$ for suitable $A_1, A_2, E_1, \ldots, E_n \in \pL(\eX)$.  
Since $\Delta \Delta^{-1}=\Iota $ and $B_2 F_1=0$ one has
$ \sum_{j=1}^n A_1 E_j \otimes F_j B_1 F_1= I \otimes F_1$.
In particular, $I \in A_1 \sL(\Delta^{-1})$. Let $\{F_1, \ldots, F_{i_1}\}$ be the first block of the Jordan basis $\{F_1, \ldots, F_{n}\}$. 
Set $\eZ_1= \spa \{F_1, \ldots, F_{i_1}\}$ and $ \eZ_2=   \spa \{F_{i_1+1}, \ldots, F_n\}$. Then $B_2 \eZ_2 \subseteq B_1 \eZ_2$ and 
$B_2 F_{k+1}= B_1 F_k$ for $1 \leq k \leq i_1-1$. Choose $F \in \eZ_1$ and $G \in \eZ_2$ such that $B_1 (F+ G) =I$. Write 
$F= \sum_{k=1}^{i_1} \alpha_k F_k$ and let $l \in \{1, \ldots, i_1\}$ be the greatest integer satisfying $\alpha_l \neq 0$. 
It follows from $\Delta^{-1} \Delta =\Iota $ and Lemma \ref{inj} that
\begin{equation*}
\sum_{k=1}^{i_1-1} (E_{k+1}A_2+ E_k A_1) \otimes B_1 F_k + E_{i_1}A_1\otimes B_1 F_{i_1}+ \sum_{\substack{i_1+1 \leq k \leq n \\  1\leq j\leq 2}}E_k A_j \otimes B_j F_k =I .
\end{equation*}
If $l \neq i_1$, then $E_{i_1} A_1=0$, which is not possible as $I \in  A_1 \sL(\Delta^{-1})$. Hence $l=i_1$ and $E_{i_1} A_1= \alpha_lI$. 
This entails that $A_1$ is invertible. If $B_1$ is invertible, then we can apply Lemma \ref{suminv} and Theorem \ref{standard} to the operator 
$\Mu_{A, B}^{-1}\Delta$ and get the conclusion (1). Next suppose that $B_1$ is not invertible. Since $I \in B_1 \sR(\Delta^{-1})$, the restriction of the map 
$\rR_{B_1}$ to $\sR(\Delta^{-1})$ has to be injective. Hence $\Dim(\sR(\Delta^{-1}) \sR(\Delta)) \geq n$. Since $I \not\in \rR_{B_1} (\sR(\Delta^{-1}))$ one has 
$\Dim(\sR(\Delta^{-1}) \sR(\Delta)) \geq n+1$.  By Lemma \ref{dim}, $\Dim(\sL(\Delta) \sL(\Delta^{-1})) \leq n$. Since $A_1$ is invertible 
$\Dim(\sL(\Delta) \sL(\Delta^{-1}))= n$. The above argument applied to $A_1$ and $\sL(\Delta)\sL(\Delta^{-1})$ implies that $\Delta= \Mu_1+ \Mu_2$, 
where $\Mu_1$ and $\Mu_2$ are multiplication operators and $\Mu_1$ is invertible. The desired conclusion follows, once again, by Lemma \ref{suminv} 
and Theorem \ref{standard}.

Case 2. $\rk(\pS) \leq n-1$.
Let $A_1 \in  \sL(\Delta)$. Write $\Delta= \sum \Mu_{A_i, B_i}$ for suitable $A_2, B_1, B_2 \in \pL(\eX)$. Choose $F_1 \in \sR(\Delta^{-1})$ such that 
$B_2F_1=0$. Then $ \sum_{j=1}^n A_1 E_j \otimes F_j B_1 F_1= I \otimes F_1$.
Hence $A_1$ is right invertible.

Now suppose that $\Dim(\sL(\Delta^{-1}) \sL(\Delta) \leq n$. A reasoning similar to that one just presented gives that either (1) or (3) holds.
\end{proof}

\begin{corollary} 
Let $\Delta$ be an  elementary operator of length $2$ on $\bM_n$. Then one of the following conditions holds.

{\rm (1)} There exists a multiplication operator $\Mu$ such that $\Mu \Delta=0$.

{\rm (2)} $\Delta$ is a sum of two invertible multiplication operators and is invertible.
\end{corollary}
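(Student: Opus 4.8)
The plan is to split into two cases according to whether $\Delta$ is invertible in $\pL(\bM_n)$. The observation that makes everything go through is that $\bM_n$ is finite-dimensional: by Lemma \ref{inj} the homomorphism $\varphi$ is injective, and since $\varphi$ maps $\bM_n \otimes \bM_n^{op}$ onto $\Ele(\bM_n)$ by definition, one gets $\Dim \Ele(\bM_n) = \Dim(\bM_n \otimes \bM_n^{op}) = n^4 = \Dim \pL(\bM_n)$; hence the inclusion $\Ele(\bM_n) \subseteq \pL(\bM_n)$ is an equality. Consequently, if $\Delta$ is not invertible then it is a zero divisor in this full matrix algebra, so $\Chi\Delta = 0$ already has a nonzero solution in $\Ele(\bM_n)$; and if $\Delta$ is invertible, then $\Delta^{-1}$ automatically lies in $\Ele(\bM_n)$.

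I will also use the elementary fact that every $2$-dimensional subspace $\pV$ of $\bM_n$ contains a nonzero singular matrix: picking a basis $A_1, A_2$ of $\pV$, the polynomial $\det(sA_1 + tA_2) \in \bC[s,t]$ is homogeneous of degree $n$, so either it vanishes identically, in which case $A_1$ itself is singular, or it factors into linear forms over $\bC$ and hence vanishes at some $(s_0, t_0) \neq (0,0)$, making $s_0 A_1 + t_0 A_2$ a nonzero singular element of $\pV$. Combining this with the remarks that, for $A \in \bM_n$, the operator $\rR_A$ (respectively $\rL_A$) is injective on $\bM_n$ precisely when $A$ is invertible, and that a one-sided inverse of an element of $\bM_n$ is automatically two-sided, we see that no $2$-dimensional subspace of $\bM_n$ can have all its nonzero elements invertible, and in none of them can each element admit a one-sided inverse in $\bM_n$.

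For the first case, suppose $\Delta$ is not invertible. Then $\Chi\Delta = 0$ has a nonzero solution in $\Ele(\bM_n)$, so Theorem \ref{M1} applies, and I claim only case $(1)$ of that theorem can occur. Indeed, in case $(2)$ one would have $\rR_A$ injective for every nonzero $A \in \sL(\Delta)$, hence every nonzero element of the $2$-dimensional space $\sL(\Delta)$ would be invertible, contradicting the previous paragraph; case $(3)$ is excluded in the same way, using $\sR(\Delta)$ and $\rL_B$. Therefore Theorem \ref{M1}$(1)$ holds, which is precisely conclusion $(1)$.

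For the second case, suppose $\Delta$ is invertible. Then $\Delta^{-1} \in \Ele(\bM_n)$, of some length $m$, so Proposition \ref{inv} applies, and its cases $(2)$ and $(3)$ are both impossible: in case $(2)$ every element of $\sL(\Delta)$ would have a right inverse in $\bM_n$, hence be invertible, again contradicting the singular-element fact for the $2$-dimensional space $\sL(\Delta)$; case $(3)$ is excluded symmetrically via $\sR(\Delta)$. Thus Proposition \ref{inv}$(1)$ holds, i.e.\ $\Delta = \Mu_{A,B} + \Mu_{C,D}$ with $\Mu_{A,B}$ and $\Mu_{C,D}$ invertible multiplication operators, which together with the invertibility of $\Delta$ is conclusion $(2)$. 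As the two cases are exhaustive, this finishes the proof. The only delicate point is to check that the dimensional reductions carried out in the proofs of Theorem \ref{M1} and Proposition \ref{inv} really do yield the one-sided invertibility statements used above; granting those, the argument is routine, and I do not expect any serious obstacle.
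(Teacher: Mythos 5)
Your proof is correct and follows essentially the same route as the paper: exclude cases (2) and (3) of Theorem \ref{M1} (when $\Delta$ is not invertible) and of Proposition \ref{inv} (when it is) by observing that a two-dimensional subspace of $\bM_n$ always contains a nonzero singular matrix. You merely make explicit two facts the paper leaves implicit, namely that $\Ele(\bM_n)=\pL(\bM_n)$ by the dimension count $n^4=n^4$ (so a non-invertible $\Delta$ has an elementary annihilator and an invertible one has an elementary inverse), and the homogeneous-determinant argument for the singular element; your closing worry is unnecessary, since the injectivity/one-sided-invertibility conditions you use appear in the statements of Theorem \ref{M1} and Proposition \ref{inv}, not only in their proofs.
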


\begin{proof} Suppose first that $\Delta$ is invertible. If every nonzero element of $\sL(\Delta)$ is right invertible, then $\sL(\Delta)$ can be seen as 
a two dimensional subspace of constant rank $n$, which is impossible. Analogously, we see that there exists a nonzero element of $\sR(\Delta)$ which is not 
left invertible. It follows, by Proposition \ref{inv}, that $\Delta$ is a sum of two invertible multiplication operators. 
Assume now that $\Delta$ is not invertible and $\Mu \Delta \neq 0$, for every multiplication operator $\Mu$. Then, by Theorem \ref{M1}, either every   
element of  $\sL(\Delta)\setminus \{ 0\}$ is invertible or every element of $\sR(\Delta)\setminus \{ 0\}$ is invertible, which is impossible, as shown above. 
\end{proof}

\begin{theorem}
Let $\Delta  $ be an invertible elementary operator of length $2$. Suppose that $\Delta^{-1}$ is an elementary operator of length $2$, as well. Then  
there exist multiplication operators $\Mu_1, \Mu_2, \Gamma_1$, and $\Gamma_2$ such that  $\Delta= \Mu_1+ \Mu_2$, $\Delta^{-1}= \Gamma_1+\Gamma_2 $, 
and one of the following assertions holds.

{\rm (1)} $\Mu_1$ and $ \Mu_2$ are invertible.

{\rm (2)} $\Gamma_i \Mu_j= \Mu_j \Gamma_i=0$, for $i \neq j$.
\end{theorem}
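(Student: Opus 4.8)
The plan is to invoke Proposition~\ref{inv} with $n=2$, which splits the situation into three cases, and then to push the analysis a little further in the two ``non-split'' cases. If assertion~(1) of Proposition~\ref{inv} holds, there is nothing to do: I take $\Mu_1,\Mu_2$ to be the two invertible multiplication operators whose sum is $\Delta$ and write $\Delta^{-1}=\Gamma_1+\Gamma_2$, which is possible since $\ell(\Delta^{-1})=2$; this is assertion~(1) of the theorem. So from now on I suppose that assertion~(2) of Proposition~\ref{inv} holds (assertion~(3) is entirely symmetric, interchanging the roles of left and right multiplications).

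The key point will be to prove that $\sR(\Delta)\,\sR(\Delta^{-1})=\bC I$. For $B\in\sR(\Delta)$ let $\widehat B$ denote the restriction of $\rL_B$ to the two-dimensional space $\sR(\Delta^{-1})$, and set $\pS=\{\widehat B:\ B\in\sR(\Delta)\}$; note that $\sR(\Delta)\,\sR(\Delta^{-1})$ is the span of the ranges of the members of $\pS$. By assertion~(2) of Proposition~\ref{inv} every $\widehat B$ has a nontrivial kernel, hence $\rk(\widehat B)\le 1$; moreover $\widehat B\neq 0$ whenever $B\neq 0$, since otherwise $B\,\sR(\Delta^{-1})=\{0\}$ would give $\Delta^{-1}\Mu_{P,B}=0$ for all $P$, contradicting injectivity of $\Delta^{-1}$. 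Also $\Ker\pS=\bigcap_B\Ker\widehat B=\{0\}$, because a nonzero $F_0$ in this intersection would satisfy $\Mu_{P,F_0}\Delta=0$ for all $P$, contradicting surjectivity of $\Delta$. Finally, $\Delta^{-1}\Delta=\Iota$ shows $I\in\sR(\Delta)\,\sR(\Delta^{-1})$, so $\pS\neq\{0\}$ and $\rk(\pS)=1$. Since $\Ker\pS=\{0\}$ and every nonzero member of $\pS$ has rank one, $\pS$ must be two-dimensional, and the description of two-dimensional rank-one matrix spaces (Lemma~\ref{brank}, with $n=2$ and $r=1$) forces $\Dim\bigl(\sR(\Delta)\,\sR(\Delta^{-1})\bigr)=1$; as this space contains $I$, it equals $\bC I$.

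Granting $\sR(\Delta)\,\sR(\Delta^{-1})=\bC I$, I would finish as follows. Fix a basis $\{B_1,B_2\}$ of $\sR(\Delta)$. The lines $\Ker\widehat B_1$ and $\Ker\widehat B_2$ are distinct, as their intersection is $\Ker\pS=\{0\}$, so I may choose a basis $\{F_1,F_2\}$ of $\sR(\Delta^{-1})$ with $\bC F_1=\Ker\widehat B_2$ and $\bC F_2=\Ker\widehat B_1$, so that $B_2F_1=B_1F_2=0$. By Lemma~\ref{lem02} write $\Delta=\Mu_{A_1,B_1}+\Mu_{A_2,B_2}$ and $\Delta^{-1}=\Mu_{E_1,F_1}+\Mu_{E_2,F_2}$, and set $\Mu_i=\Mu_{A_i,B_i}$, $\Gamma_i=\Mu_{E_i,F_i}$. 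Then $\Gamma_1\Mu_2=\Mu_{E_1A_2,\,B_2F_1}=0$ and $\Gamma_2\Mu_1=\Mu_{E_2A_1,\,B_1F_2}=0$ at once. Since $\sR(\Delta)\,\sR(\Delta^{-1})=\bC I$ we have $B_1F_1=pI$ and $B_2F_2=sI$ with $p,s\neq 0$ (if $p=0$, then $\widehat B_1=0$, which is false). A short computation gives $\Delta^{-1}\rR_{B_1}=\Mu_{E_1,\,B_1F_1}+\Mu_{E_2,\,B_1F_2}=p\,\rL_{E_1}$; composing on the left with $\Delta$ and using $\Delta\Delta^{-1}=\Iota$ yields $p\,\Delta\rL_{E_1}=\rR_{B_1}$, that is, $p\,\Mu_{A_1E_1,B_1}+p\,\Mu_{A_2E_1,B_2}=\Mu_{I,B_1}$. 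As $\{B_1,B_2\}$ is linearly independent, Lemma~\ref{inj} and Lemma~\ref{dim} force $A_2E_1=0$, whence $\Mu_2\Gamma_1=\Mu_{A_2E_1,\,F_1B_2}=0$. The same computation with $B_2$ in place of $B_1$ gives $A_1E_2=0$ and $\Mu_1\Gamma_2=0$. Thus all four cross-products vanish, which is assertion~(2) of the theorem.

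The hard part will be the structural reduction $\sR(\Delta)\,\sR(\Delta^{-1})=\bC I$: this is precisely where the assumption that we are not in the split case (which forces $\pS$ to have bounded rank one), the invertibility of $\Delta$ (which makes $\Ker\pS$ trivial), and the classification of two-dimensional rank-one matrix spaces from Lemma~\ref{brank} all come together. Everything afterwards is routine bookkeeping with the identities $\Delta\Delta^{-1}=\Delta^{-1}\Delta=\Iota$ and with linear independence.
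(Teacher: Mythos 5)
Your proof is correct and follows essentially the same route as the paper's: reduce via Proposition \ref{inv} to the case where every element of $\sR(\Delta)$ has a right zero divisor in $\sR(\Delta^{-1})$, produce bases with $B_2F_1=B_1F_2=0$ and $B_1F_1,B_2F_2$ spanning the line $\bC I$, and then extract $A_iE_j=0$ for $i\neq j$ from $\Delta\Delta^{-1}=\Delta^{-1}\Delta=\Iota$ together with Lemma \ref{inj}. The only cosmetic differences are that the paper gets $\Dim\bigl(\sR(\Delta)\sR(\Delta^{-1})\bigr)=1$ by the one-line observation that linear independence of $B_1F_1$ and $B_2F_2$ would make $\widehat{B_1+B_2}$ injective (rather than by invoking the classification in Lemma \ref{brank}), and that it reads off the relations $A_iE_j=0$ directly from the tensor identity instead of composing with $\rR_{B_i}$.
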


\begin{proof}
Once again, for each $B \in \sR(\Delta)$, let $\widehat{B}: \sR(\Delta^{-1}) \rightarrow \sR(\Delta) \sR(\Delta^{-1})$ denote the restriction of $\rL_B$ to 
$\sR(\Delta)^{-1}$. In view of Proposition \ref{inv} it is enough to consider the case  when every element of $\sR(\Delta)$ has a right zero divisor in 
$\sR(\Delta)^{-1}$. Write $\Delta=\spa \{B_1, B_2\}$ and choose $F_1 \in \sR(\Delta)^{-1}\setminus\{ 0\}$ such that $B_2 F_1=0$. Then $B_1 F_1 \neq 0$. 
Choose $F_2 \in \sR(\Delta)^{-1}\setminus\{ 0\}$ such that $B_1 F_2 =0$. If $\{B_1 F_1, B_2 F_2\}$ was a linearly independent set, then 
$\{(B_1+B_2) F_1, (B_1+B_2) F_2\}$ would be linearly independent, which contradicts our assumption. On the other hand, $I \in \sR(\Delta) \sR(\Delta)^{-1}$. 
Thus, with no loss of generality, we may suppose that $B_1 F_1= B_2 F_2=I$. Write $\Delta= \sum \Mu_{A_i, B_i}$  and $\Delta^{-1}= \sum_{i=1}^2 \Mu_{E_i, F_i}$.  
Then it follows from our assumptions and Lemma \ref{inj} that  $(E_1 A_1+ E_2 A_2 )\otimes B_1F_1=I$. This implies that $ E_1A_1+E_2A_2=I$. 
Now we infer from the fact that $\Delta \Delta^{-1}= I$ that
$\sum A_1 E_i \otimes F_iB_1+ \sum A_2 E_i \otimes F_i B_2= I$.
This entails that $\sum A_1 E_i  \otimes F_i=I \otimes F_1$ and $\sum A_2E_i \otimes F_i= I \otimes F_2$.
Consequently, $A_iE_i=I$ and $A_i E_j=0$, for $i \neq j$. Therefore, we have $F_1B_1+F_2B_2=I$. Set $\Gamma_i= \Mu_{E_i, F_i}$ and $\Mu_i= \Mu_{A_i, B_i}$. 
Then  $\Gamma_i \Mu_j= \Mu_j \Gamma_i=0 $, as desired.
\end{proof}

The following example illustrates  the second case in the above theorem.

\begin{example}
Let $\eH$ be the separable Hilbert space and let $\{e_i\}_{i=1}^{\infty}$ be an orthonormal basis. 
Let the operators $B_1, B_2$ be defined by 
\begin{equation*}
B_1 e_{2i+1}=0,\qquad B_2 e_{2i+1}=e_{i+1} \qquad (i \geq 0)
\end{equation*} 
and 
\begin{equation*}
B_1  e_{2i}=e_i,\qquad B_2 e_{2i}= 0 \qquad (i \geq 1).
\end{equation*} 
Set $ \Delta= \Mu_{B_1, B_2}+ \Mu_{B_2, B_1}$. Then $\Delta$ is invertible and its inverse is $\Mu_{F_1, F_2}+\Mu_{F_2, F_1}$, where
\begin{equation*} 
F_1 e_i= e_{2i}\quad \text{and}\quad  F_2 e_i= e_{2i-1}\qquad (i \geq 1).
\end{equation*}  
Observe that $\Delta$ cannot be a sum of two invertible multiplication operators as the pencil $(B_1, B_2)$ is not regular. 
Moreover, a straightforward calculation shows that every elementary operator $\Psi$ satisfying $\sL(\Psi)= \sL(\Delta)$ and 
$\sR(\Psi)= \sR(\Delta)$ has to be invertible and its inverse is a length $2$ elementary operator.
\end{example}


\end{document}